\DeclareFontFamily{U}{mathx}{\hyphenchar\font45}
\DeclareFontShape{U}{mathx}{m}{n}{
      <5> <6> <7> <8> <9> <10>
      <10.95> <12> <14.4> <17.28> <20.74> <24.88>
      mathx10
      }{}
\DeclareSymbolFont{mathx}{U}{mathx}{m}{n}
\DeclareMathAccent{\widecheck}{0}{mathx}{"71}
\newcommand\xleftrightarrow[2][]{%
  \ext@arrow 9999{\longleftrightarrowfill@}{#1}{#2}}
\newcommand\longleftrightarrowfill@{%
  \arrowfill@\leftarrow\relbar\rightarrow}
\theoremstyle{plain}
\newtheorem{thm}{Theorem}[section]
\newtheorem{cor}{Corollary}[thm]
\newtheorem{lem}[thm]{Lemma}
\newtheorem{prop}[thm]{Proposition}
\theoremstyle{definition}
\newtheorem{exmp}{Example}[section]
\theoremstyle{remark}
\newtheorem*{rem}{Remark}
\newtheorem*{notation}{Notation}
\newcommand\inner[2]{\left\langle #1, #2 \right\rangle}
\newcommand{\tco}{\mathcal{T}}
\newcommand{\bo}{\mathcal{L}(L^2)}
\newcommand{\SC}{\mathcal{T}}
\newcommand{\beauty}{\mathcal{B}}
\newcommand{\beast}{\mathcal{B}'}
\newcommand{\R}{\mathbb{R}}
\newcommand{\Rd}{\mathbb{R}^d}
\newcommand{\Rdd}{\mathbb{R}^{2d}}
\newcommand{\Z}{\mathbb{Z}}
\newcommand{\N}{\mathbb{N}}
\newcommand{\HS}{\mathcal{HS}}
\newcommand{\F}{\mathcal{F}}
\newcommand{\tr}{\mathrm{tr}}
\newcommand{\weyl}{a}
\begin{document}
\pagestyle{plain}
\title{Quantum harmonic analysis on lattices and Gabor multipliers}
\author{Eirik Skrettingland} 
\address{Department of Mathematics\\ NTNU Norwegian University of Science and
Technology\\ NO–7491 Trondheim\\Norway}
\email{eirik.skrettingland@ntnu.no}
\keywords{Gabor multipliers, Tauberian theorems, Feichtinger's algebra, Fourier-Wigner transform.}
\subjclass[2010]{47B38,47B10,35S05,42B05,43A32}
\begin{abstract}
 We develop a theory of quantum harmonic analysis on lattices in $\Rdd$. Convolutions of a sequence with an operator and of two operators are defined over a lattice, and using corresponding Fourier transforms of sequences and operators we develop a version of harmonic analysis for these objects. We prove analogues of results from classical harmonic analysis and the quantum harmonic analysis of Werner, including Tauberian theorems and a Wiener division lemma. Gabor multipliers from time-frequency analysis are described as convolutions in this setting. The quantum harmonic analysis is thus a conceptual framework for the study of Gabor multipliers, and several of the results include results on Gabor multipliers as special cases.   
\end{abstract}
\maketitle \pagestyle{myheadings} \markboth{E. Skrettingland}{Quantum harmonic analysis on lattices}
\thispagestyle{empty}

\section{Introduction}
In time-frequency analysis, one studies a signal $\psi \in L^2(\Rd)$ by considering various time-frequency representations of $\psi$. An important class of time-frequency representations is obtained by fixing $\varphi\in L^2(\Rd)$ and considering the \textit{short-time Fourier transform} $V_\varphi \psi$ of $\psi$ with window $\varphi$, which is the function on the time-frequency plane $\Rdd$ given by
\begin{equation*}
  V_{\varphi}\psi(z)=\inner{\psi}{\pi(z)\varphi}_{L^2} \quad \text{ for } z\in \Rdd,
\end{equation*}
where $\pi(z):L^2(\Rd)\to L^2(\Rd)$ is the \textit{time-frequency shift} given by $\pi(z)\varphi(t)=e^{2\pi i \omega \cdot t}\varphi(t-x)$ for $z=(x,\omega).$ The intuition is that $V_\varphi \psi(z)$ carries information about the components of the signal $\psi$ with frequency $\omega$ at time $x$. 

A question going back to von Neumann \cite{vonNeumann:1955} and Gabor \cite{Gabor:1946} is the validity of reconstruction formulas of the form 
\begin{equation} \label{eq:intro1}
  \psi = \sum_{\lambda \in \Lambda} V_\varphi \psi(\lambda) \pi(\lambda) \xi \text{ for any } \psi\in L^2(\Rd),
\end{equation}
where $\Lambda=A\Z^{2d}$ for $A\in GL(2d,\R)$ is a lattice in $\Rdd$ and $\varphi,\xi \in L^2(\Rd)$. It is known that \eqref{eq:intro1} is indeed true for certain windows $\varphi,\xi$ and lattices $\Lambda$, and such formulas naturally lead to the concept of \textit{Gabor multipliers}. If $\varphi,\xi \in L^2(\Rd)$ and $m=\{m(\lambda)\}_{\lambda \in \Lambda}$ is a sequence of complex numbers, we define the Gabor multiplier $\mathcal{G}_{m}^{\varphi,\xi}:L^2(\Rd) \to L^2(\Rd)$ by
\begin{equation*}
  \mathcal{G}_{m}^{\varphi,\xi}(\psi)=\sum_{\lambda \in \Lambda} m(\lambda)V_\varphi \psi(\lambda) \pi(\lambda) \xi.
\end{equation*}
Compared to \eqref{eq:intro1} we see that $\mathcal{G}_{m}^{\varphi,\xi}$ modifies the time-frequency content of $\psi$ in a simple way, namely by multiplying the samples of its time-frequency representation with a mask $m$. Gabor multipliers have been studied in the mathematics literature by  \cite{Grochenig:2013,Feichtinger:2002,Feichtinger:2003,Grochenig:2011,Dorfler:2010,Benedetto:2006,Feichtinger:1998a,Cordero:2003} among others, and also in more application-oriented contributions \cite{Balazs:2010,Taubock:2019,Rajbamshi:2019}.

Gabor multipliers are the discrete analogues of the much-studied localization operators \cite{Daubechies:1990,Cordero:2003,Bayer:2014,Grochenig:2011toft}. In \cite{Luef:2018c} we showed that the quantum harmonic analysis developed by Werner and coauthors \cite{Werner:1984,Kiukas:2012} provides a conceptual framework for localization operators, leading to new results and interesting reinterpretations of older results on localization operators. The goal of this paper is therefore to develop a version of quantum harmonic analysis for lattices to provide a similar conceptual framework for Gabor multipliers. Hence we continue the line of research into applications of quantum harmonic analysis from \cite{Luef:2018c,Luef:2018b,Luef:2018}.

With this aim we introduce two convolutions of operators and sequences in Section \ref{sec:convolutions}. Following \cite{Werner:1984,Kozek:1992a,Feichtinger:1998} we first define the translation of an operator $S$ on $L^2(\Rd)$ by $\lambda\in \Lambda$ to be the operator $$\alpha_\lambda(S)=\pi(\lambda)S\pi(\lambda)^*.$$ If $c\in \ell^1(\Lambda)$ and $S$ is a trace class operator on $L^2(\Rd)$, the convolution $c \star_\Lambda S$ is defined to be the \textit{operator}
\begin{equation*}
  c \star_\Lambda S=\sum_{\lambda \in \Lambda} c(\lambda)\alpha_\lambda(S).
\end{equation*}
Gabor multipliers are then given by convolutions
\begin{equation*}
   \mathcal{G}_{m}^{\varphi,\xi}=m\star_\Lambda (\xi \otimes \varphi),
\end{equation*}
where $\xi \otimes \varphi$ is the  rank-one operator $\xi \otimes \varphi(\psi)=\inner{\psi}{\varphi}_{L^2} \xi$.
Furthermore, we define the convolution $S\star_\Lambda T$ of two trace class operators $S$ and $T$ to be the \textit{sequence} over $\Lambda$ given by 
\begin{equation*}
  S\star_\Lambda T(\lambda)=\tr(S\alpha_\lambda (\check{T})),
\end{equation*}
where $\check{T}=PTP$ with $P$ the parity operator $P\psi(t)=\psi(-t)$ for $\psi \in L^2(\Rd)$. In Section \ref{sec:convolutions} we investigate the commutativity and associativity of these convolutions, extend their domains and in Proposition \ref{prop:youngschatten} we establish a version of Young's inequality for convolutions of operators and sequences. 

An important tool throughout the paper is a Banach space $\beauty$ of trace class operators, consisting of operators with Weyl symbol in the so-called Feichtinger algebra \cite{Feichtinger:1981}. The use of $\beauty$ allows us to obtain continuity results for the convolutions with respect to $\ell^p(\Lambda)$ and Schatten-$p$ classes -- an important example is Proposition \ref{prop:convwelldefined} which states that 
\begin{equation*}
  \|S\star_\Lambda T\|_{\ell^1(\Lambda)}\lesssim \|S\|_{\beauty} \|T\|_{\tco}
\end{equation*}
for $S\in \beauty$ and trace class $T$, where $\|\cdot\|_\tco$ is the trace class norm. While there are other classes of operators that would ensure that $S\star_\Lambda T\in \ell^1(\Lambda)$, see for instance the Schwartz operators \cite{Keyl:2015}, $\beauty$ has the advantage of being a Banach space, hence allowing the use of tools such as Banach space adjoints. The space $\beauty$ has previously been studied by \cite{Feichtinger:1998,Dorfler:2010,Feichtinger:2018} among others.

To complement the convolutions, we introduce Fourier transforms of sequences and operators in Section \ref{sec:fouriertransforms}. For a sequence  $c\in \ell^1(\Lambda)$ we use its symplectic Fourier series 
\begin{equation*}
  \F_\sigma^\Lambda(c)(z)=\sum_{\lambda \in \Lambda} c(\lambda)e^{2\pi i \sigma(\lambda,z)} \quad \text{ for } z\in \Rdd,
\end{equation*}
where $\sigma(z,z')=\omega\cdot x'-x\cdot \omega'$ for $z=(x,\omega),z'=(x',\omega').$ As a Fourier transform for trace class operators $S$ we use the Fourier-Wigner transform
\begin{equation*}
 \F_W(S)(z)= e^{-\pi i x\cdot \omega} \tr(\pi(-z)S) \quad \text{ for } z=(x,\omega)\in \Rdd.
\end{equation*}
 Equipped with both convolutions and Fourier transforms, we naturally ask whether the Fourier transforms turn convolutions into products. We show in Theorem \ref{thm:orthogonality} for $z\in \Rdd$ that
\begin{equation} \label{eq:introFs}
		\F_{\sigma}^{\Lambda}(S\star_\Lambda T)(z)=\frac{1}{|\Lambda|} \sum_{\lambda^\circ\in \Lambda^\circ}F_W(S)(z+\lambda^\circ)\F_W(T)(z+\lambda^\circ),
	\end{equation}
	where $\Lambda^\circ$ is the adjoint lattice of $\Lambda$ defined in Section \ref{sec:fouriertransforms}, and in Propositions \ref{prop:spreadinggm} and \ref{prop:spreadinggm2} we show that
	\begin{equation} \label{eq:introFW}
  \F_W(c\star_\Lambda S)(z)=\F_\sigma^\Lambda(c)(z)\F_W(S)(z).
\end{equation}
These results include as special cases the so-called fundamental identity of Gabor analysis \cite{Rieffel:1988,Feichtinger:2006,Tolimieri:1994,Janssen:1995} and results on the spreading function of Gabor multipliers due to \cite{Dorfler:2010}. Equations \eqref{eq:introFs} and \eqref{eq:introFW} hold for general classes of operators and sequences, and we take care to give a precise interpretation of the objects and equalities in all cases. 

A fruitful approach to Gabor multipliers due to Feichtinger \cite{Feichtinger:2002} is to consider the so-called Kohn-Nirenberg symbol of operators. The Kohn-Nirenberg symbol of an operator $S$ on $L^2(\Rd)$ is a function on $\Rdd$, and Feichtinger used this to reduce questions about Gabor multipliers in the Hilbert Schmidt operators to questions about functions in $L^2(\Rdd)$. This approach has later been used in other papers on Gabor multipliers \cite{Dorfler:2010,Benedetto:2006,Feichtinger:2003}. As Gabor multipliers are examples of convolutions, we show in Section \ref{sec:riesz} that this approach can be generalized and phrased in terms of our quantum harmonic analysis, and that one of the main results of \cite{Feichtinger:2002} finds a natural interpretation as a Wiener's lemma in our setting -- see Theorem \ref{thm:biorthogonal}, Corollary \ref{cor:banachisomorphism} and the remarks following the corollary. 

In Section \ref{sec:tauberian} we show the extension of some deeper results of harmonic analysis on $\Rd$ to our setting. We obtain an analogue of Wiener's classical Tauberian theorem in Theorem \ref{thm:bigtauberian}, similar to the results of Werner and coauthors \cite{Werner:1984,Kiukas:2012} in the continuous setting. As an example we have the following equivalent statements for $S\in \beauty:$
\begin{enumerate}[(i)]
			\item The set of zeros of $\F_\sigma^\Lambda(S\star_\Lambda \check{S}^*)$ contains no open subsets in $\Rdd/\Lambda^\circ$.
			\item If $c\star_\Lambda S=0$ for $c\in \ell^1(\Lambda)$, then $c=0$.
			\item $\beast \star_\Lambda S$ is weak*-dense in $\ell^\infty(\Lambda)$.
\end{enumerate}
These results are related to earlier investigations of Gabor multipliers by Feichtinger  \cite{Feichtinger:2002}. In particular, he showed that if $S=\xi\otimes \varphi$ is a rank-one operator and  $\F_\sigma^\Lambda(S\star_\Lambda \check{S}^*)$ has \textit{no} zeros, then any $m\in \ell^\infty(\Lambda)$ can be recovered from the Gabor multiplier $\mathcal{G}_{m}^{\varphi,\xi}$. Since Gabor multipliers are given by convolutions, the equivalence (i) $\iff$ (ii) shows that we can recover $m\in \ell^1(\Lambda)$ from $\mathcal{G}_{m}^{\varphi,\xi}$ under the weaker condition (i) -- this holds in particular for finite sequences $m$.

Finally, we apply our techniques to prove a version of Wiener's division lemma in Theorem \ref{thm:underspread}. At the level of Weyl symbols this turns out to reproduce a result by Gr\"ochenig and Pauwels \cite{Grochenig:2014}, but in our context it has the following interpretation:
\begin{quote}
	If $\F_W(S)$ has compact support for some operator $S$, and the support is sufficiently small compared to the density of $\Lambda$, then there exists a sequence $m\in \ell^\infty(\Lambda)$ such that $S=m\star_\Lambda A$ for some $A\in \beauty$. If $S$ belongs to the Schatten-$p$ class of compact operators, then $m\in \ell^p(\Lambda)$. 
\end{quote}
 The above result fits well into the common intuition that operators $S$ with compactly supported $\F_W(S)$ (so-called underspread operators) can be approximated by Gabor multipliers \cite{Dorfler:2010} -- i.e. by operators $c\star_\Lambda T$ where $T$ is a rank-one operator. The result shows that if we allow $T$ to be \textit{any} operator in $\beauty$, then any underspread operator $S$ is precisely of the form $S=c\star_\Lambda T$ for a sufficiently dense lattice $\Lambda$. 
 
 We end this introduction by emphasizing the hybrid nature of our setting. In \cite{Werner:1984}, Werner introduced quantum harmonic analysis of functions on $\Rdd$ and operators on the Hilbert space $L^2(\Rd)$. We are considering the discrete setting of sequences on a lattice instead of functions on $\Rdd$. If we had modified the Hilbert space $L^2(\Rd)$ accordingly, many of our results would follow by the arguments of \cite{Werner:1984}, as already outlined in \cite{Kiukas:2012}. However, we keep the same Hilbert space $L^2(\Rd)$ as in the continuous setting. We are therefore mixing the discrete (lattices) and the continuous ($L^2(\Rd)$), which leads to some extra intricacies. 
 
\section{Conventions}
By a lattice $\Lambda$ we mean a full-rank lattice in $\Rdd$, i.e. $\Lambda=A\Z^{2d}$ for $A\in GL(2d,\R)$. The volume of $\Lambda=A\Z^{2d}$ is $|\Lambda|:=\det(A)$. For a lattice $\Lambda$, the Haar measure on $\Rdd/\Lambda$ will always be normalized so that $\Rdd/\Lambda$ has total measure $1$. 

If $X$ is a Banach space and $X'$ its dual space, the action of $y\in X'$ on $x\in X$ is denoted by the bracket $\inner{y}{x}_{X',X}$, where the bracket is antilinear in the second coordinate to be compatible with the notation for inner products in Hilbert spaces. This means that we are identifying the dual space $X'$ with \textit{anti}linear functionals on $X$. For two Banach spaces $X,Y$ we use $\mathcal{L}(X,Y)$ to denote the Banach space of continuous linear operators from $X$ to $Y$, and if $X=Y$ we simply write $\mathcal{L}(X)$. The notation $P \lesssim Q$ means that there is some $C>0$ such that $P\leq C\cdot Q$. 

\section{Spaces of operators and functions}
\subsection{Time-frequency shifts and the short-time Fourier transform}
For $z= (x,\omega)\in \Rdd$ we define the \textit{time-frequency shift} operator $\pi(z)$ by
\begin{equation*}
  (\pi(z)\psi)(t)=e^{2\pi i \omega \cdot t}\psi(t-x) \quad \text{ for } \psi \in L^2(\Rd).
\end{equation*}
Hence $\pi(z)$ can be written as the composition $M_\omega T_x$ of a translation operator $(T_x\psi)(t)=\psi(t-x)$ and a modulation operator $(M_\omega \psi)(t)=e^{2\pi i \omega \cdot t}\psi(t)$. 
The time-frequency shifts $\pi(z)$ are unitary operators on $L^2(\Rd)$. For $\psi,\varphi\in L^2(\Rd)$ we can use the time-frequency shifts to define the \textit{short-time Fourier transform } $V_\varphi \psi$ of $\psi$ with window $\varphi$ by 
\begin{equation*} 
  V_\varphi \psi (z)=\inner{\psi}{\pi(z)\varphi}_{L^2} \quad \text{ for } z\in \Rdd.
\end{equation*}
The short-time Fourier transform satisfies an orthogonality condition, sometimes called Moyal's identity \cite{Grochenig:2001,Folland:1989}.
\begin{lem}[Moyal's identity]
If $\psi_1, \psi_2, \varphi_1, \varphi_2 \in L^2(\R^d)$, then $V_{\varphi_i}\psi_j \in L^2(\R^{2d})$ for $i,j\in \{1,2\}$, and the relation
\begin{equation*}
	\inner{V_{\varphi_1}\psi_1}{V_{\varphi_2}\psi_2}_{L^2}=\inner{\psi_1}{\psi_2}_{L^2}\overline{\inner{\varphi_1}{\varphi_2}}_{L^2}
\end{equation*}
holds, where the leftmost inner product is in $L^2(\R^{2d})$ and those on the right are in $L^2(\Rd)$.
\end{lem}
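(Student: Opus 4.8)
The plan is to reduce the statement to the Plancherel theorem on $L^2(\Rd)$ via a change of variables. First I would verify that $V_\varphi\psi$ is a measurable function on $\Rdd$ for $\psi,\varphi\in L^2(\Rd)$; this follows since $z\mapsto \pi(z)\varphi$ is continuous from $\Rdd$ to $L^2(\Rd)$, so $V_\varphi\psi$ is continuous. Next I would write out the integrand explicitly: using $\pi(z)\varphi(t)=e^{2\pi i\omega\cdot t}\varphi(t-x)$ we get
\begin{equation*}
  V_\varphi\psi(x,\omega)=\int_{\Rd}\psi(t)\,\overline{\varphi(t-x)}\,e^{-2\pi i\omega\cdot t}\,dt=\F\big(\psi\cdot\overline{T_x\varphi}\big)(\omega),
\end{equation*}
so that for fixed $x$, the map $\omega\mapsto V_\varphi\psi(x,\omega)$ is the Fourier transform of the function $g_x:=\psi\cdot\overline{T_x\varphi}$, which lies in $\Ld\cap\Ltwo$ when $\psi,\varphi\in\Ltwo$ (by Cauchy--Schwarz it is in $\Ld$, and it is in $\Ltwo$ provided $\varphi\in L^\infty$; in general one argues by density or notes $g_x\in\Ltwo$ fails pointwise but the $L^2$-in-$\omega$ statement is recovered via Plancherel applied to $L^1\cap L^2$ functions after a density reduction).

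The key computation is then, for the two pairs $(\psi_1,\varphi_1)$ and $(\psi_2,\varphi_2)$, to apply Plancherel in the $\omega$ variable:
\begin{equation*}
  \int_{\Rd} V_{\varphi_1}\psi_1(x,\omega)\,\overline{V_{\varphi_2}\psi_2(x,\omega)}\,d\omega
  =\int_{\Rd}\psi_1(t)\overline{\varphi_1(t-x)}\,\overline{\psi_2(t)}\varphi_2(t-x)\,dt.
\end{equation*}
Integrating this identity over $x\in\Rd$ and applying Fubini gives
\begin{equation*}
  \inner{V_{\varphi_1}\psi_1}{V_{\varphi_2}\psi_2}_{L^2(\Rdd)}
  =\int_{\Rd}\int_{\Rd}\psi_1(t)\overline{\psi_2(t)}\;\overline{\varphi_1(t-x)}\varphi_2(t-x)\,dx\,dt,
\end{equation*}
and the inner integral in $x$, after the substitution $s=t-x$, factors out as $\int_{\Rd}\overline{\varphi_1(s)}\varphi_2(s)\,ds=\overline{\inner{\varphi_1}{\varphi_2}}_{L^2}$, leaving $\inner{\psi_1}{\psi_2}_{L^2}\,\overline{\inner{\varphi_1}{\varphi_2}}_{L^2}$ as claimed. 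Taking $\psi_1=\psi_2=\psi$ and $\varphi_1=\varphi_2=\varphi$ shows $\|V_\varphi\psi\|_{L^2(\Rdd)}=\|\psi\|_{L^2}\|\varphi\|_{L^2}<\infty$, which retroactively justifies that $V_{\varphi_i}\psi_j\in L^2(\Rdd)$.

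The main obstacle is the justification of the interchange of integrals (Fubini) and of the pointwise-in-$x$ application of Plancherel, since the integrand $\psi_1(t)\overline{\varphi_1(t-x)}\overline{\psi_2(t)}\varphi_2(t-x)$ need not be absolutely integrable over $\Rdd$ for general $L^2$ functions. The standard remedy, which I would follow, is to first prove the identity for $\psi_i,\varphi_i$ in a dense subspace where everything is manifestly absolutely convergent --- e.g. the Schwartz space $\mathcal{S}(\Rd)$, or compactly supported bounded functions --- where Fubini and Plancherel apply without difficulty, and then extend to all of $L^2(\Rd)$ by a continuity/density argument: the sesquilinear form $(\psi_1,\varphi_1,\psi_2,\varphi_2)\mapsto \inner{V_{\varphi_1}\psi_1}{V_{\varphi_2}\psi_2}_{L^2}$ is bounded on the dense subspace by the already-established norm identity, hence extends uniquely and continuously, and both sides agree on the dense set.
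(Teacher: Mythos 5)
Your proof is correct and is essentially the standard argument from the references the paper cites for this lemma (the paper itself states it without proof): reduce to Plancherel in the $\omega$-variable for each fixed $x$, integrate in $x$ with Fubini, and handle the integrability issues by first working on a dense subspace and then extending by continuity using the norm identity. The only point worth nailing down in your final density step is that the continuous extension of $(\psi,\varphi)\mapsto V_\varphi\psi$ coincides with the pointwise-defined function $z\mapsto\inner{\psi}{\pi(z)\varphi}_{L^2}$; this follows since $V_{\varphi_n}\psi_n\to V_\varphi\psi$ uniformly on $\Rdd$ (by Cauchy--Schwarz) whenever $\psi_n\to\psi$ and $\varphi_n\to\varphi$ in $L^2(\Rd)$, so the $L^2$-limit and the pointwise limit agree almost everywhere.
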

By replacing the inner product in the definition of $V_\varphi \psi$ by a duality bracket, one can define the short-time Fourier transform for other classes of $\psi,\varphi$. The most general case we need is that of a Schwartz function $\varphi \in \mathcal{S}(\Rd)$ and a tempered distribution $\psi \in \mathcal{S}'(\Rd)$; we define 
\begin{equation*} 
  V_\psi \varphi (z)=\inner{\psi}{\pi(z)\varphi}_{\mathcal{S}',\mathcal{S}} \quad \text{ for } z\in \Rdd.
\end{equation*}

\subsection{Feichtinger's algebra}
 An appropriate space of functions for our purposes will be Feichtinger's algebra $S_0(\Rd)$, first introduced by Feichtinger in \cite{Feichtinger:1981}. To define $S_0(\Rd)$, let $\varphi_0$ denote the $L^2$-normalized Gaussian $\varphi_0(x)=2^{d/4}e^{-\pi x\cdot x}$ for $x\in \Rd$. Then $S_0(\Rd)$ is the space of all $\psi\in \mathcal{S}'(\Rd)$ such that 
  \begin{equation*}
  \|\psi\|_{S_0}:=\int_{\Rdd} |V_{\varphi_0}\psi(z)| \ dz <\infty.
\end{equation*}
 With the norm above, $S_0(\Rd)$ is a Banach space of continuous functions and an algebra under multiplication and convolution \cite{Feichtinger:1981}. By \cite[Thm. 11.3.6]{Grochenig:2001}, the dual space of $S_0(\Rd)$ is the space $S_0'(\Rd)$ consisting of all $\psi \in \mathcal{S'}(\Rd)$ such that 
 \begin{equation*}
  \|\psi\|_{S_0'}:=\sup_{z\in \Rdd} |V_{\varphi_0}\psi(z)| \ dz <\infty,
\end{equation*}
where an element $\psi\in S_0'(\Rd)$ acts on $\phi\in S_0(\Rd)$ by 
\begin{equation*}
  \inner{\phi}{\psi}_{S_0',S_0}=\int_{\Rdd} V_{\varphi_0}\phi(z) \overline{V_{\varphi_0}\psi(z)} \ dz.
\end{equation*}
We get the following chain of continuous inclusions:
\begin{equation*}
  \mathcal{S}(\Rd)\hookrightarrow S_0(\Rd) \hookrightarrow L^2(\Rd) \hookrightarrow S_0'(\Rd) \hookrightarrow \mathcal{S}'(\Rd).
\end{equation*}

One important reason for using Feichtinger's algebra is that it consists of continuous functions, and that sampling them over a lattice produces a summable sequence \cite[Thm. 7C)]{Feichtinger:1981}.

\begin{lem}[Sampling Feichtinger's algebra] \label{lem:s0sampling} 
	Let $\Lambda$ be a lattice in $\Rdd$ and $f\in S_0(\Rdd)$. Then $f\vert_\Lambda = \{f(\lambda)\}_{\lambda \in \Lambda}\in \ell^1(\Lambda)$ with
	\begin{equation*}
		\|f \vert_{\Lambda}\|_{\ell^1} \lesssim \|f\|_{S_0},
	\end{equation*}
	where the implicit constant depends only on the lattice $\Lambda.$
\end{lem}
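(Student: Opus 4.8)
The plan is to exploit the defining property of $S_0(\Rdd)$, namely that $f$ is completely controlled by its STFT $V_{\varphi_0}f$ against a fixed Gaussian window, and to convert the $\ell^1$-sum over the lattice $\Lambda$ into an integral over $\Rdd$ that is dominated by $\|f\|_{S_0}$. The key device is a \emph{local control} estimate: for any point $\lambda$, the value $|f(\lambda)|$ is bounded by the average of $|V_{\varphi_0}f|$ over a small neighbourhood of the form $\{\lambda\}\times Q$ in phase space $\Rdd\times\widehat{\Rdd}$, where $Q$ is a fixed cube (or ball) around the origin in the frequency variable. Concretely, I would first establish an inequality of the shape
\begin{equation*}
  |f(\lambda)| \lesssim \int_{Q} |V_{\varphi_0}f(\lambda,\eta)|\, d\eta
\end{equation*}
for a suitable fixed compact neighbourhood $Q$ of $0\in\widehat{\Rdd}$, with the implicit constant independent of $\lambda$. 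This follows from the inversion formula for the STFT together with the fact that $V_{\varphi_0}f$ is continuous and the reproducing-kernel structure of the range of $V_{\varphi_0}$; one writes $f(\lambda)$ as a suitable smoothed version of its own STFT and uses that $\varphi_0$ and its Fourier transform decay rapidly, so that the contribution from $\eta$ outside a fixed cube is absorbed.

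Next I would sum this over $\lambda\in\Lambda$. Writing $\Lambda=A\Z^{2d}$, the translates $\{\lambda+Q'\}_{\lambda\in\Lambda}$ of a sufficiently large fundamental-domain-type set $Q'\supseteq$ a fundamental domain of $\Lambda$ cover $\Rdd$ with bounded overlap: there is an integer $N=N(\Lambda)$ such that every point of $\Rdd$ lies in at most $N$ of the sets $\lambda+Q'$. Combining this with the local control estimate and the covering property in the frequency variable as well, one obtains
\begin{equation*}
  \sum_{\lambda\in\Lambda} |f(\lambda)| \lesssim \sum_{\lambda\in\Lambda} \int_{Q} |V_{\varphi_0}f(\lambda,\eta)|\, d\eta \lesssim \int_{\Rdd}\int_{\widehat{\Rdd}} |V_{\varphi_0}f(w,\eta)|\, d\eta\, dw = \|f\|_{S_0},
\end{equation*}
where the middle inequality uses that each phase-space point $(w,\eta)$ is counted a bounded number of times when we replace the lattice sum of integrals over $Q$ by a single integral over all of phase space. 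The constant produced this way depends on the overlap number $N$, on $|Q|$, and on the geometry of $A$, hence only on $\Lambda$, as claimed.

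The main obstacle is making the local control estimate genuinely uniform in $\lambda$ and in the correct variables: one must be careful that the STFT $V_{\varphi_0}f$ on $\Rdd$ lives on the $4d$-dimensional phase space $\Rdd\times\widehat{\Rdd}$, and that the cube $Q$ over which we integrate in the auxiliary (frequency) variable must be chosen once and for all, independently of $f$ and $\lambda$, with the tail contribution controlled by the Schwartz decay of $\varphi_0$. An alternative, cleaner route — which I would actually prefer to write up — is to invoke the known fact that $S_0(\Rdd)$ coincides with the Wiener amalgam space $W(\mathcal{F}L^1,\ell^1)$ (equivalently, that $f\in S_0$ iff $f$ is locally in the Fourier algebra with $\ell^1$-control of the local norms over any lattice), for which the sampling inequality $\|f|_\Lambda\|_{\ell^1}\lesssim \|f\|_{W(\mathcal{F}L^1,\ell^1)}$ is immediate from the continuous embedding $\mathcal{F}L^1\hookrightarrow C_0$ applied on each cube of a covering adapted to $\Lambda$; the dependence of the constant on $\Lambda$ enters only through the number of lattice points in a fixed cube and the overlap of the covering. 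Either way, the cited result \cite[Thm. 7C)]{Feichtinger:1981} is exactly this statement, so the proof amounts to recalling the amalgam-space characterization and the elementary covering argument.
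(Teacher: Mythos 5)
Your second, ``amalgam-space'' route is correct and is in substance exactly the content of the cited result \cite[Thm. 7C)]{Feichtinger:1981}; the paper itself offers no proof beyond that citation. Recalling that $S_0(\Rdd)=W(\mathcal{F}L^1,\ell^1)$, that $\|g\|_{\infty}\le\|\hat g\|_{L^1}$ gives the local embedding $\mathcal{F}L^1\hookrightarrow C_0$, and that sampling a $W(C_0,\ell^1)$-function on a lattice costs only a constant counting lattice points per member of the covering, is a complete and appropriate argument, so write that version up.

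The first route, as sketched, does not work, for two separate reasons. First, the proposed local control estimate $|f(\lambda)|\lesssim\int_Q|V_{\varphi_0}f(\lambda,\eta)|\,d\eta$ with a \emph{fixed compact} $Q$ and a constant independent of $f$ is false: Fourier inversion of $f\cdot\overline{T_\lambda\varphi_0}$ gives $f(\lambda)\overline{\varphi_0(0)}=\int_{\R^{2d}}V_{\varphi_0}f(\lambda,\eta)e^{2\pi i\eta\cdot\lambda}\,d\eta$, but where the mass of $V_{\varphi_0}f(\lambda,\cdot)$ sits is dictated by the frequency content of $f$, not by the decay of $\varphi_0$; taking $f=M_\xi\varphi_0$ with $|\xi|$ large makes $\int_Q|V_{\varphi_0}f(0,\eta)|\,d\eta$ arbitrarily small while $f(0)$ stays fixed. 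Second, even after replacing $Q$ by all of $\R^{2d}$, the summation step fails: the sets $\{\lambda\}\times\R^{2d}$ are pairwise disjoint slices of measure zero in $\R^{4d}$, so ``bounded overlap'' buys nothing, and $\sum_\lambda\int|V_{\varphi_0}f(\lambda,\eta)|\,d\eta$ is not dominated by the full phase-space integral for a general continuous $L^1$ function (tall thin spikes over the lattice points defeat it). The ingredient that repairs this is the reproducing-kernel domination $|V_{\varphi_0}f|\le\|\varphi_0\|_{L^2}^{-2}\,|V_{\varphi_0}f|\ast|V_{\varphi_0}\varphi_0|$, which genuinely smears the pointwise values over full-measure neighbourhoods; inserting it, interchanging sum and integral, and using that the $\Lambda$-periodization of the Gaussian $|V_{\varphi_0}\varphi_0|$ (integrated out in the second variable) is bounded by a constant depending only on $\Lambda$, one lands on $\|V_{\varphi_0}f\|_{L^1(\R^{4d})}=\|f\|_{S_0}$ as desired. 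Either supply that domination step explicitly or stick with the amalgam argument.
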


\subsection{The symplectic Fourier transform}
We will use the \textit{symplectic Fourier transform}  $\F_\sigma f$ of functions $f\in L^1(\Rdd)$, defined by 
\begin{equation*}
  \F_{\sigma}f(z)=\int_{\R^{2d}} f(z') e^{-2 \pi i\sigma(z,z')} \ dz',
\end{equation*}
where $\sigma$ is the standard symplectic form $\sigma(z,z')=\omega\cdot x'-x\cdot \omega'$ for $z=(x,\omega),z'=(x',\omega').$ $\F_\sigma$ is a Banach space isomorphism $S_0(\Rdd)\to S_0(\Rdd)$, extends to a unitary operator $L^2(\Rdd)\to L^2(\Rdd)$ and a Banach space isomorphism $S_0'(\Rdd)\to S_0'(\Rdd)$ \cite[Lem. 7.6.2]{Feichtinger:1998}. In fact, $\F_\sigma$ is its own inverse, so that $\F_\sigma(\F_{\sigma}(f))=f$ for $f\in S_0'(\Rdd)$ \cite[Prop. 144]{deGosson:2011}.

\subsection{Banach spaces of operators on $L^2(\Rd)$}
The results of this paper concern operators on various function spaces, and we will pick operators from two kinds of spaces: the Schatten-$p$ classes $\SC^p$ for $1\leq p \leq \infty$ and a space $\beauty$ of operators defined using the Feichtinger algebra. 
\subsubsection{The Schatten classes}
Starting with the Schatten classes, we recall that 
 any compact operator $S$ on $L^2(\Rd)$ has a singular value decomposition  \cite[Remark 3.1]{Busch:2016}, i.e. there exist two orthonormal sets $\{\psi_n\}_{n\in \mathbb{N}}$ and $\{\phi_n\}_{n\in \mathbb{N}}$ in $L^2(\Rd)$ and a bounded sequence of positive numbers $\{s_n(S)\}_{n\in \mathbb{N}}$  such that $S$ may be expressed as 
\begin{equation*}
S = \sum\limits_{n \in \mathbb{N}} s_n(S) \psi_n\otimes \phi_n, 
\end{equation*}
with convergence of the sum in the operator norm. Here $\psi
 \otimes \phi$ for $\psi,\phi\in L^2(\Rd)$ denotes the rank-one operator $\psi\otimes \phi (\xi)=\inner{\xi}{\phi}_{L^2} \psi$.

For $1\leq p<\infty$ we define the \textit{Schatten-$p$ class} $\SC^p$ of operators on $L^2(\Rd)$ by $$\SC^p=\lbrace T\text{ compact}: \{s_n(T)\}_{n\in \mathbb{N}} \in \ell^p\rbrace.$$ 
	To simplify the statement of some results, we also define $\SC^{\infty}=\bo$ with $\|\cdot\|_{\SC^\infty}$ given by the operator norm. The Schatten-$p$ class $\SC^p$ is a Banach space with the norm $\|S\|_{\SC^p}=\left(\sum\limits_{n\in \mathbb{N}} s_n(S)^p\right)^{1/p}$.
Of particular interest is the space $\tco:=\tco^1$; the so-called trace class operators. Given an orthonormal basis $\{e_n\}_{n\in \N}$ of $L^2(\Rd)$, the trace defined by $$\tr(S)=\sum_{n\in \N} \inner{Se_n}{e_n}_{L^2}$$ is a well-defined and bounded linear functional on $\tco$, and independent of the orthonormal basis $\{e_n\}_{n\in \N}$ used. The dual space of $\tco$ is $\bo$ \cite[Thm. 3.13]{Busch:2016}, and $T\in \bo$ defines a bounded antilinear functional on $\tco$ by 
\begin{equation*}
  \inner{T}{S}_{\bo,\tco}=\tr(TS^*) \quad \text{ for } S\in \tco.
\end{equation*}

Another special case is the space of Hilbert-Schmidt operators $\HS:=\tco^2$, which is a Hilbert space with inner product $$\inner{S}{T}_{\HS}=\tr(ST^*).$$
\subsubsection{The Weyl transform and operators with symbol in $S_0(\Rdd)$}
The other class of operators we will use will be defined in terms of the \textit{Weyl transform.} We first need the \textit{cross-Wigner distribution} $W(\xi,\eta)$ of two functions $\xi,\eta \in L^2(\Rd)$,  defined by
{\small \begin{equation*}
  W(\xi,\eta)(x,\omega)=\int_{\R^d} \xi\left(x+\frac{t}{2}\right)\overline{\eta\left(x-\frac{t}{2}\right)} e^{-2 \pi i \omega \cdot t} \ dt \quad \text{ for } (x,\omega)\in \Rdd.
 \end{equation*} }
 For $f \in S_0'(\R^{2d})$, we define the \textit{Weyl transform} $L_{f}$ of $f$ to be the operator $L_f:S_0(\Rd)\to S_0'(\Rd)$ given by 
\begin{equation*}
  \inner{L_{f}\eta}{\xi}_{S_0',S_0}:=\inner{f}{W(\xi,\eta)}_{S_0',S_0} \quad \text{ for any } \xi,\eta \in S_0(\R^d).
\end{equation*}
$f$ is called the \textit{Weyl symbol} of the operator $L_{f}$.
 By the kernel theorem for modulation spaces \cite[Thm. 14.4.1]{Grochenig:2001}, the Weyl transform is a bijection from $S_0'(\Rdd)$ to $\mathcal{L}(S_0(\Rd),S_0'(\Rd))$.
 \begin{notation}
 In particular, any $S\in \mathcal{L}(S_0(\Rd),S_0'(\Rd))$ has a Weyl symbol, and we will denote the Weyl symbol of $S$ by $\weyl_S$. By definition, this means that $L_{\weyl_S}=S$.
\end{notation}
It is also well-known that the Weyl transform is a unitary mapping from $L^2(\Rdd)$ to $\HS$\cite{Pool:1966}. This means in particular that \begin{equation*}
  \inner{S}{T}_{\HS} = \inner{\weyl_S}{\weyl_T}_{L^2} \quad \text{ for } S,T \in \HS,
\end{equation*}
which often allows us to reduce statements about Hilbert Schmidt operators to statements about $L^2(\Rdd)$.

We then define $\beauty$ to be the Banach space of continuous operators $S:S_0(\Rd)\to S_0'(\Rd)$  such that $\weyl_S\in S_0(\Rdd)$, with norm $$\|S\|_{\beauty}:=\|\weyl_S\|_{S_0}.$$ $\beauty$ consists of trace class operators $L^2(\Rd)$ and we have a norm-continuous inclusion $\iota:\beauty \hookrightarrow \tco$\cite{Grochenig:1996,Grochenig:1999}. 
\begin{exmp}
	If $\phi,\psi \in L^2(\Rd)$, consider the rank-one operator $\phi\otimes \psi.$ Its Weyl symbol is the cross-Wigner distribution $W(\phi,\psi)$\cite[Cor. 207]{deGosson:2011}, and $W(\phi,\psi)\in S_0(\Rdd)$ if and only if $\phi,\psi \in S_0(\Rd)$\cite[Prop. 365]{deGosson:2011}. The simplest examples of operators in $\beauty$ are therefore $\phi\otimes \psi$ for $\phi,\psi \in S_0(\Rd)$.
\end{exmp}

The dual space $\beast$ can also be identified with a Banach space of operators. By definition, $\tau:\beauty \to S_0(\Rdd)$ given by $\tau(S)= \weyl_S$ is an isometric isomorphism. Hence the Banach space adjoint $\tau^*: S_0'(\Rdd)\to \beast$ is also an isomorphism. Since the Weyl transform is a bijection from $S_0'(\Rdd)$ to $\mathcal{L}(S_0(\Rd),S_0'(\Rd))$, we can identify $\beast$ with operators $S_0(\Rd)\to S_0'(\Rd)$:
\begin{equation*} 
   \beast\xleftrightarrow{\ \ \tau^*\ \ }  S_0'(\Rdd) \xleftrightarrow{\text{Weyl calculus}}  \mathcal{L}(S_0(\Rd), S_0'(\Rd)).
\end{equation*} 

In this paper we will always consider elements of $\beast$ as operators $S_0(\Rd)\to S_0'(\Rd)$ using these identifications.
 Since $\bo$ is the dual space of $\tco$, the Banach space adjoint $\iota^*:\bo\to \beast$ is a weak*-to-weak*-continuous inclusion of $\bo$ into $\beast$.

\begin{rem}
	For more results on $\beauty$ and $\beast$ we refer to \cite{Feichtinger:1998,Feichtinger:2018}. In particular we mention that we could have defined $\beauty$ using other pseudodifferential calculi, such as the Kohn Nirenberg calculus, and still get the same space $\beauty$ with an equivalent norm. We would also like to point out that the statements of this section may naturally be rephrased using the notion of Gelfand triples, see \cite{Feichtinger:1998}.
\end{rem}

\subsection{Translation of operators}
The idea of translating an operator $S\in \bo$ by $z\in \Rdd$ using conjugation with $\pi(z)$ has been utilized both in physics \cite{Werner:1984} and time-frequency analysis \cite{Feichtinger:1998,Kozek:1992a}. More precisely, we define for $z\in \Rdd$ and $S\in \beast$ the translation of $S$ by $z$ to be the operator $$\alpha_z(S)=\pi(z)S\pi(z)^*.$$
We will also need the operation $S\mapsto \check{S}=PSP$, where $P$ is the parity operator $(P\psi)(t)=\psi(-t)$ for $\psi \in L^2(\Rd)$. The main properties of these operations are listed below, note in particular that part $(i)$ supports the intuition that $\alpha_z$ is a translation of operators. See Lemmas 3.1 and 3.2 in \cite{Luef:2018c} for the proofs.

\begin{lem}\label{lem:translation}
	Let $S\in \beast$.
	\begin{enumerate}[(i)]
		\item If $\weyl_S$ is the Weyl symbol of $S$, then the Weyl symbol of $\alpha_z(S)$ is $T_z (\weyl_S).$
		\item $\alpha_z(\alpha_{z'}(S))=\alpha_{z+z'}(S).$
		\item The operations $\alpha_z$, $^*$ and $\check{\ }$ are isometries on $\beauty, \beast$ and $\SC^p$ for $1\leq p \leq \infty$.
		\item $(S^*)\widecheck{\ }= (\check{S})^*$.
	\end{enumerate}
\end{lem}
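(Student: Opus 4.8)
The plan is to reduce all four assertions to a handful of symmetry identities for the cross-Wigner distribution $W(\xi,\eta)$ together with the injectivity of the Weyl transform $f\mapsto L_f$ from $S_0'(\Rdd)$ onto $\mathcal{L}(S_0(\Rd),S_0'(\Rd))$; the only genuine computations are those identities, and everything else is formal. For (i), I would unwind the definition of the Weyl symbol: for $\xi,\eta\in S_0(\Rd)$,
\[
\inner{\alpha_z(S)\,\eta}{\xi}_{S_0',S_0}=\inner{S\,\pi(z)^*\eta}{\pi(z)^*\xi}_{S_0',S_0}=\inner{\weyl_S}{W(\pi(z)^*\xi,\pi(z)^*\eta)}_{S_0',S_0}.
\]
The crucial step is the covariance of the cross-Wigner distribution, $W(\pi(w)\xi,\pi(w)\eta)=T_w W(\xi,\eta)$, which one checks directly from the defining integral --- the modulation phases in $\pi(w)$ cancel because $W$ is conjugate-linear in its second slot. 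Since $\pi(z)^*$ equals $\pi(-z)$ up to a unimodular scalar, and that scalar drops out of the sesquilinear $W$, one obtains $W(\pi(z)^*\xi,\pi(z)^*\eta)=T_{-z}W(\xi,\eta)$; transferring $T_{-z}$ onto the other argument of the $S_0'$–$S_0$ pairing turns it into $T_z$, so $\inner{\alpha_z(S)\eta}{\xi}=\inner{T_z\weyl_S}{W(\xi,\eta)}$ for all $\xi,\eta$, i.e.\ $\weyl_{\alpha_z(S)}=T_z\weyl_S$.

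Parts (ii) and (iv) are then quick. For (ii) one can argue at the operator level: $\pi(z)\pi(z')=c\,\pi(z+z')$ for a unimodular constant $c$ (the Heisenberg cocycle), whence $\pi(z)\pi(z')\,S\,\pi(z')^*\pi(z)^*=|c|^2\,\pi(z+z')S\pi(z+z')^*=\alpha_{z+z'}(S)$; alternatively, apply (i) twice so that $\weyl_{\alpha_z(\alpha_{z'}(S))}=T_zT_{z'}\weyl_S=T_{z+z'}\weyl_S=\weyl_{\alpha_{z+z'}(S)}$ and invoke injectivity of the Weyl transform. For (iv), $(S^*)\widecheck{\ }=PS^*P$ while $(\check S)^*=(PSP)^*=P^*S^*P^*=PS^*P$ since $P^*=P$, so the two coincide.

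For (iii) I would first record, by the same unwinding as in (i) and using $W(\eta,\xi)=\overline{W(\xi,\eta)}$ and $W(P\xi,P\eta)=\bigl(W(\xi,\eta)\bigr)(-\,\cdot)$, the symbol identities $\weyl_{S^*}=\overline{\weyl_S}$ (equivalently $L_{\bar f}=L_f^*$) and $\weyl_{\check S}=\weyl_S(-\,\cdot)$. Thus $\alpha_z$, $^*$ and $\check{\ }$ act on Weyl symbols as translation by $z$, complex conjugation, and the reflection $f\mapsto f(-\,\cdot)$; each of these is an isometry of both $S_0(\Rdd)$ and $S_0'(\Rdd)$, as is immediate from the defining $V_{\varphi_0}$-(semi)norms once one notes that $\varphi_0$ is real and even. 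This yields the claims for $\beauty$ and $\beast$. On $\SC^p$ the assertions are classical: $\alpha_z$ is conjugation by the unitary $\pi(z)$ and $\check{\ }$ is conjugation by the unitary $P$, both of which leave the singular values unchanged, and $s_n(S^*)=s_n(S)$ for every $n$.

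The main thing to watch --- and essentially the only place an error could slip in --- is the bookkeeping of the unimodular phase factors relating $\pi(z)^*$ to $\pi(-z)$ and $\pi(z)\pi(z')$ to $\pi(z+z')$, and the verification that they genuinely cancel in the sesquilinear expressions involving $W$. Once the covariance and reflection/conjugation identities for $W$ are established, no further difficulty remains; this essentially reproduces the argument of \cite[Lemmas 3.1--3.2]{Luef:2018c}, specialized to the ingredients recalled in this section.
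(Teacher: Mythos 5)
Your proof is correct, and since the paper itself gives no argument here but simply defers to Lemmas 3.1--3.2 of \cite{Luef:2018c}, your write-up supplies exactly the standard argument those references use: reduce everything to the covariance, reflection and conjugation identities for the cross-Wigner distribution, then transfer them through the Weyl correspondence. The phase bookkeeping you flag ($\pi(z)^*=c\,\pi(-z)$ and the Heisenberg cocycle) is handled correctly, since the unimodular scalars indeed cancel in the sesquilinear expressions.
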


By the last part we can unambiguously write $\check{S}^*$. 
\section{Convolutions of sequences and operators} \label{sec:convolutions}

In \cite{Werner:1984}, the convolution of a function $f\in L^1(\Rdd)$ and an operator $S\in \tco$ was defined by the operator-valued integral 
\begin{equation*}
  f\star S = \int_{\Rdd} f(z) \alpha_z(S) \ dz
\end{equation*}
and the convolution of two operators $S,T \in \tco$ was defined to be the \textit{function}
\begin{equation*}
  S\star T(z)=\tr(S \alpha_z(\check{T})) \quad \text{ for } z\in \Rdd.
\end{equation*}
These definitions, along with a Fourier transform defined for operators, have been shown to produce a theory of quantum harmonic analysis with non-trivial consequences for topics such as quantum measurement theory \cite{Kiukas:2012} and time-frequency analysis \cite{Luef:2018c}. The setting where $\Rdd$ is replaced by some lattice $\Lambda \subset \Rdd$ is frequently studied in time-frequency analysis, and our goal is therefore to develop a theory of convolutions and Fourier transforms of operators in that setting. 

For a sequence $c\in \ell^1(\Lambda)$ and $S \in \tco$, we define the operator 

\begin{equation} \label{eq:convseqop}
  c \star_\Lambda S:= S\star_{\Lambda} c:= \sum_{\lambda \in \Lambda} c(\lambda) \alpha_{\lambda}(S),
\end{equation}
and for operators $S\in \beauty$ and $T\in \tco$ we define the sequence
\begin{equation} \label{eq:convopop}
  S\star_\Lambda T(\lambda)= S\star T(\lambda) \quad \text{ for } \lambda \in \Lambda.
\end{equation}
Hence $S\star_\Lambda T$ is the \textit{sequence} obtained by restricting the \textit{function} $S\star T$ to $\Lambda$.
\begin{rem}
	We use the same notation $\star_\Lambda$ for the convolution of an operator and a sequence and for the convolution of two operators. The correct interpretation of $\star_\Lambda$ will always be clear from the context. 
\end{rem}

Since $\alpha_\lambda$ is an isometry on $\tco$ and $\beauty$,  $c\star_\Lambda S$ is well-defined with $\|c\star_\Lambda S\|_{\tco}\leq \|c\|_{\ell^1} \|S\|_{\tco}$ for $S\in \tco$ and similarly $\|c\star_\Lambda S\|_{\beauty}\leq \|c\|_{\ell^1} \|S\|_{\beauty}$ for $S\in \beauty$. The fact that $S\star_\Lambda T$ is a well-defined and summable sequence on $\Lambda$ is less straightforward. 

\begin{prop} \label{prop:convwelldefined}
	If $S\in \beauty$ and $T\in \tco$, then $S\star_\Lambda T\in \ell^1(\Lambda)$ with $\|S\star_\Lambda T\|_{\ell^1}\lesssim \|S\|_{\beauty}\|T\|_{\tco}$. 
\end{prop}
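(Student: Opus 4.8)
The plan is to reduce the estimate on $\|S \star_\Lambda T\|_{\ell^1}$ to the sampling lemma for Feichtinger's algebra, Lemma \ref{lem:s0sampling}. The key observation is that the function $z \mapsto S \star T(z) = \tr(S\alpha_z(\check T))$ should belong to $S_0(\Rdd)$ when $S \in \beauty$ and $T \in \tco$, with a norm estimate of the form $\|S \star T\|_{S_0} \lesssim \|S\|_\beauty \|T\|_\tco$. Once that is established, restricting to the lattice $\Lambda$ and applying Lemma \ref{lem:s0sampling} immediately gives $\|S \star_\Lambda T\|_{\ell^1} \lesssim \|S \star T\|_{S_0} \lesssim \|S\|_\beauty \|T\|_\tco$, as desired.

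So the real work is to show $S \star T \in S_0(\Rdd)$ with the stated norm bound. First I would reduce to rank-one operators: write $T = \sum_n s_n(T)\, \psi_n \otimes \phi_n$ in its singular value decomposition, so that $\|T\|_\tco = \sum_n s_n(T)$, and use that $z \mapsto S \star T(z)$ depends linearly and continuously on $T \in \tco$ (indeed $|S\star T(z)| = |\tr(S\alpha_z(\check T))| \le \|S\|_{\bo}\|T\|_\tco$ pointwise, and $\|S\|_\bo \lesssim \|S\|_\beauty$). By the triangle inequality in $S_0$ it then suffices to bound $\|S \star (\psi \otimes \phi)\|_{S_0}$ by $\|S\|_\beauty \|\psi\|_{L^2}\|\phi\|_{L^2}$ for $\psi,\phi \in L^2(\Rd)$, since $\|\psi_n\otimes\phi_n\|_\tco = \|\psi_n\|_{L^2}\|\phi_n\|_{L^2} = 1$. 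For a rank-one operator one computes $\check{(\psi\otimes\phi)} = (P\psi)\otimes(P\phi)$ and $\alpha_z((P\psi)\otimes(P\phi)) = (\pi(z)P\psi)\otimes(\pi(z)P\phi)$, so that
\begin{equation*}
  S \star (\psi\otimes\phi)(z) = \tr\big(S\,(\pi(z)P\psi)\otimes(\pi(z)P\phi)\big) = \inner{S\,\pi(z)P\psi}{\pi(z)P\phi}_{L^2}.
\end{equation*}
Writing $S = L_{\weyl_S}$ with $\weyl_S \in S_0(\Rdd)$, this is a matrix coefficient of the Weyl-quantized operator, which by standard time-frequency identities can be expressed via the short-time Fourier transform / cross-Wigner distribution: up to a change of variables and a unimodular factor it is $\weyl_S$ integrated against $W(\pi(z)P\psi, \pi(z)P\phi)$, i.e. essentially a partial symplectic convolution of $\weyl_S$ with the rank-one symbol $W(P\psi,P\phi)$. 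The point is that $S_0(\Rdd)$ is a convolution algebra and, more to the point, is a module over the relevant algebra, so that convolving $\weyl_S \in S_0(\Rdd)$ against the (possibly merely $L^2$) Wigner function still lands in $S_0(\Rdd)$ with the norm controlled by $\|\weyl_S\|_{S_0}\|\psi\|_{L^2}\|\phi\|_{L^2}$.

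The main obstacle I anticipate is making this last step clean, since $\psi,\phi$ are only assumed to be in $L^2(\Rd)$, not in $S_0(\Rd)$, so $W(P\psi,P\phi)$ is only in $L^2(\Rdd)$ and the naive ``$S_0 * S_0 \subset S_0$'' algebra property does not directly apply. The cleanest route is probably to use the STFT characterization of $S_0(\Rdd)$ directly: pick the Gaussian window $\varphi_0$ on $\Rdd$, estimate $V_{\varphi_0}(S\star(\psi\otimes\phi))$ pointwise by a convolution of $|V_{\Phi}\weyl_S|$ (with $\Phi$ an appropriate window on $\Rdd$) against a rapidly decaying kernel coming from the cross-ambiguity function of the Gaussian, and then integrate using Young's inequality — the $L^2\times L^2$ factors of $\psi,\phi$ being absorbed by Cauchy–Schwarz / Moyal's identity. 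Alternatively, and perhaps more in the spirit of the paper, one can invoke the known fact that the map $(\psi,\phi) \mapsto$ (the rank-one operator) extended via $S\star(\cdot)$ factors through the Hilbert–Schmidt class and that $\beauty \star \HS$-type convolutions are already understood; but in any case the crux is the quantitative continuity $\beauty \times \tco \to S_0(\Rdd)$, after which Lemma \ref{lem:s0sampling} finishes the proof.
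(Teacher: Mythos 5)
Your first paragraph is exactly the paper's proof: the paper quotes \cite[Thm.~8.1]{Luef:2018c} for the fact that $S\star T\in S_0(\Rdd)$ with $\|S\star T\|_{S_0}\lesssim \|S\|_{\beauty}\|T\|_{\tco}$ and then applies Lemma~\ref{lem:s0sampling} together with $S\star_\Lambda T(\lambda)=S\star T(\lambda)$, so the approach is the same. The remainder of your proposal is an attempt to reprove that cited fact; the reduction to rank-one operators via the singular value decomposition and the STFT/Moyal strategy for the crucial estimate $\|S\star(\psi\otimes\phi)\|_{S_0}\lesssim\|S\|_{\beauty}\|\psi\|_{L^2}\|\phi\|_{L^2}$ is indeed the right route (and essentially how the cited theorem is proved), but as written that estimate is only sketched, so to make the argument self-contained you would need to carry out the window computation --- or simply cite the reference, as the paper does.
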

\begin{proof}
	By \cite[Thm. 8.1]{Luef:2018c} we know that $S\star T\in S_0(\Rdd)$ with $\|S\star T\|_{S_0}\lesssim \|S\|_{\beauty} \|T\|_\tco.$ Hence the result follows from Lemma \ref{lem:s0sampling} and $S\star_\Lambda T(\lambda)= S\star T(\lambda)$.
\end{proof}

\subsection{Gabor multipliers and sampled spectrograms}
If we consider rank-one operators, these convolutions reproduce well-known objects from time-frequency analysis.
First consider the rank-one operator $\xi_1 \otimes \xi_2$ for $\xi_1,\xi_2\in L^2(\Rd)$. The operators $c\star_\Lambda (\xi_1 \otimes \xi_2)$ are well-known in time-frequency analysis as \textit{Gabor multipliers} \cite{Feichtinger:2002,Feichtinger:2003,Benedetto:2006,Dorfler:2010}: it is simple to show that 
\begin{equation*}
  \alpha_{\lambda} (\xi_1 \otimes \xi_2)=(\pi(\lambda)\xi_1) \otimes (\pi(\lambda)\xi_2),
\end{equation*}
so if $c\in \ell^1(\Lambda)$ it follows from the definition \eqref{eq:convseqop} that $c\star_\Lambda (\xi_1\otimes \xi_2)$ acts on $\psi \in L^2(\Rd)$ by 
\begin{equation}\label{eq:gabormultiplier}
  c\star_\Lambda (\xi_1\otimes \xi_2)\psi =\sum_{\lambda \in \Lambda} c(\lambda)V_{\xi_2}\psi(\lambda)\pi(\lambda)\xi_1,
\end{equation}
which is the definition of the Gabor multiplier $\mathcal{G}_c^{\xi_2,\xi_1}$ used in time-frequency analysis \cite{Feichtinger:2003}, i.e. $\mathcal{G}_c^{\xi_2,\xi_1}=c\star_\Lambda (\xi_1\otimes \xi_2)$.
\begin{rem}
	In this sense, operators of the form $c\star_\Lambda S$ are a generalization of Gabor multipliers. We mention that this is a different generalization from the \textit{multiple Gabor multipliers} introduced in \cite{Dorfler:2010}. 
\end{rem}
If we pick another rank-one operator $\check{\varphi_1}\otimes \check{\varphi_2}$ for $\varphi_1,\varphi_2\in L^2(\Rd)$ (here $\check{\varphi}(t)=\varphi(-t)$), one can calculate using the definition \eqref{eq:convopop} that
\begin{equation} \label{eq:tworankone0}
  (\xi_1 \otimes \xi_2)\star_\Lambda (\check{\varphi_1}\otimes \check{\varphi_2})(\lambda)=V_{\varphi_2} \xi_1(\lambda)\overline{V_{\varphi_1}\xi_2(\lambda)}.
\end{equation}
In particular, if $\varphi_1=\varphi_2=\varphi$ and $\xi_1=\xi_2=\xi$, then 
\begin{equation} \label{eq:tworankone}
 ( \xi\otimes \xi) \star_\Lambda (\check{\varphi}\otimes \check{\varphi}) (\lambda)=|V_\varphi \xi(\lambda)|^2.
\end{equation}
The function $|V_\varphi \xi (z)|^2$ is the so-called spectrogram of $\xi$ with window $\varphi$, hence $(\xi\otimes \xi) \star_\Lambda (\check{\varphi}\otimes \check{\varphi})$ consists of samples of the spectrogram over $\Lambda$. 

Finally, if $S\in \tco$ is any operator, then one may calculate that 
\begin{equation} \label{eq:generalwithrankone}
  S\star_\Lambda \check{\varphi_1}\otimes \check{\varphi_2} (\lambda)=\inner{S\pi(\lambda)\varphi_1}{\pi(\lambda)\varphi_2}_{L^2},
\end{equation}
often called the lower symbol of $S$ with respect to $\varphi_1,\varphi_2$ and $\Lambda$ \cite{Feichtinger:2002}.
 \begin{rem}
	In particular, Proposition \ref{prop:convwelldefined} does not hold for all $S\in \tco$. By Remark 4.6 in \cite{Benedetto:2006}, there exists a function $\psi \in L^2(\R)$ such that $$\sum_{(m,n)\in \Z^2} (\psi \otimes \psi)\star_{\Z^2} (\check{\psi} \otimes \check{\psi})(m,n)=\sum_{(m,n)\in \Z^2} |V_\psi \psi (m,n)|^2 =\infty.$$ Since $\psi \otimes \psi,\check{\psi}\otimes \check{\psi} \in \tco$, this shows that the assumption $S\in \beauty$ in Proposition \ref{prop:convwelldefined}  is necessary.
\end{rem}  

\subsection{Associativity and commutativity of convolutions}
Since the convolution $S\star T$ of two operators $S,T\in \tco$ is commutative in the continuous setting\cite[Prop. 3.2]{Werner:1984}, it follows from the definitions that the convolutions \eqref{eq:convseqop} and \eqref{eq:convopop} are commutative. It is also a straightforward consequence of the definitions that the convolutions are bilinear. 

In the original theory of Werner \cite{Werner:1984}, the associativity of the convolution operations is of fundamental importance. Associativity still holds in some cases when moving from $\Rdd$ to $\Lambda$, but we will later see in Corollary \ref{cor:noassociativity} that the convolution of three operators over a lattice is not associative in general. In what follows, $c\ast_\Lambda d$ denotes the usual convolution of sequences 
\begin{equation*}
  c\ast_\Lambda d(\lambda)=\sum_{\lambda'\in \Lambda} c(\lambda')d(\lambda-\lambda').
\end{equation*}

\begin{prop}[Associativity]
	Let $c,d\in \ell^1(\Lambda)$, $S \in \beauty$ and $T\in \tco$. Then
	\begin{enumerate}[(i)]
		\item $c \ast_\Lambda (S\star_\Lambda T)=(c\star_\Lambda S)\star_\Lambda T$,
		\item $(c\ast_\Lambda d)\star_\Lambda T=c\star_\Lambda(d\star_\Lambda T)$. 
	\end{enumerate}
\end{prop}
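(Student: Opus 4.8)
The plan is to prove both identities by expanding the definitions into absolutely convergent series in $\tco$ and rearranging, the two crucial algebraic inputs being the representation property $\alpha_{\lambda}\circ\alpha_{\lambda'}=\alpha_{\lambda+\lambda'}$ from Lemma~\ref{lem:translation}(ii) and the unitarity of the time-frequency shifts. Before starting one should note that every object in the statement is well-defined: $c\ast_\Lambda d\in\ell^1(\Lambda)$ since $\ell^1(\Lambda)$ is a convolution algebra, $d\star_\Lambda T\in\tco$ and $c\star_\Lambda S\in\beauty\hookrightarrow\tco$ by the bounds recorded just before Proposition~\ref{prop:convwelldefined}, and hence $S\star_\Lambda T\in\ell^1(\Lambda)$ by Proposition~\ref{prop:convwelldefined}, so that $c\ast_\Lambda(S\star_\Lambda T)$ makes sense.

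For part (ii) I would expand
\[
(c\ast_\Lambda d)\star_\Lambda T=\sum_{\lambda\in\Lambda}\Big(\sum_{\lambda'\in\Lambda}c(\lambda')d(\lambda-\lambda')\Big)\alpha_\lambda(T)
\]
and reindex by $\mu=\lambda-\lambda'$. Since $c,d\in\ell^1(\Lambda)$ and $\|\alpha_\lambda(T)\|_{\tco}=\|T\|_{\tco}$ by Lemma~\ref{lem:translation}(iii), the double series converges absolutely in $\tco$, so Fubini for Banach-space-valued series lets us sum over $\mu$ first; using $\alpha_{\lambda'+\mu}(T)=\alpha_{\lambda'}(\alpha_\mu(T))$ and pulling the bounded linear map $\alpha_{\lambda'}$ through the norm-convergent inner sum gives $\sum_{\lambda'}c(\lambda')\,\alpha_{\lambda'}\big(\sum_\mu d(\mu)\alpha_\mu(T)\big)=c\star_\Lambda(d\star_\Lambda T)$.

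Part (i) is the substantive one, as it couples the two kinds of convolution. Evaluating the right-hand side at $\lambda\in\Lambda$ and using that $c\star_\Lambda S=\sum_{\lambda'}c(\lambda')\alpha_{\lambda'}(S)$ converges in $\tco$, together with continuity of the trace and of multiplication by the fixed trace class operator $\alpha_\lambda(\check{T})$, I would interchange trace and sum,
\[
(c\star_\Lambda S)\star_\Lambda T(\lambda)=\tr\big((c\star_\Lambda S)\,\alpha_\lambda(\check{T})\big)=\sum_{\lambda'\in\Lambda}c(\lambda')\,\tr\big(\alpha_{\lambda'}(S)\,\alpha_\lambda(\check{T})\big),
\]
the interchange being justified by the uniform bound $|\tr(\alpha_{\lambda'}(S)\alpha_\lambda(\check{T}))|\le\|S\|_{\tco}\|T\|_{\tco}$, which makes the series dominated by $\|c\|_{\ell^1}\|S\|_{\tco}\|T\|_{\tco}$. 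Then I would write $\alpha_\lambda(\check{T})=\alpha_{\lambda'}\big(\alpha_{\lambda-\lambda'}(\check{T})\big)$ via Lemma~\ref{lem:translation}(ii) and collapse the conjugations using cyclicity of the trace and $\pi(\lambda')^*\pi(\lambda')=I$:
\[
\tr\big(\alpha_{\lambda'}(S)\,\alpha_{\lambda'}(\alpha_{\lambda-\lambda'}(\check{T}))\big)=\tr\big(\pi(\lambda')\,S\,\alpha_{\lambda-\lambda'}(\check{T})\,\pi(\lambda')^*\big)=\tr\big(S\,\alpha_{\lambda-\lambda'}(\check{T})\big)=S\star_\Lambda T(\lambda-\lambda').
\]
Substituting back yields $(c\star_\Lambda S)\star_\Lambda T(\lambda)=\sum_{\lambda'}c(\lambda')(S\star_\Lambda T)(\lambda-\lambda')=c\ast_\Lambda(S\star_\Lambda T)(\lambda)$, as desired.

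The only real obstacle is bookkeeping: every interchange of a sum with the trace or with one of the maps $\alpha_\lambda$ must be underwritten by an absolute-convergence estimate in the correct Banach space, and here one leans precisely on $c,d\in\ell^1(\Lambda)$, on $\alpha_\lambda$ being an isometry of $\tco$ and of $\beauty$ (Lemma~\ref{lem:translation}(iii)), and on $T\in\tco$ so that $|\tr(AB)|\le\|A\|_{\tco}\|B\|_{\bo}$ applies. No deeper input is needed; in particular the failure of full three-operator associativity alluded to in Corollary~\ref{cor:noassociativity} does not intrude here because one of the three factors is always a sequence.
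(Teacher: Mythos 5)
Your proof is correct and follows essentially the same route as the paper: both parts are proved by unwinding the definitions into absolutely convergent series and using Lemma~\ref{lem:translation}. The only cosmetic difference is in part (i), where the paper invokes commutativity of $\star_\Lambda$ and reassembles $c\star_\Lambda S$ inside the trace via $\alpha_{-\lambda'}(\check{S})=P\alpha_{\lambda'}(S)P$, whereas you expand the other side and shift the translation from $\alpha_{\lambda'}(S)$ onto $\check{T}$ term by term using cyclicity of the trace; both hinge on the same invariance of the trace pairing under simultaneous translation.
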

\begin{proof}
For the proof of $(i)$, we write out the definitions of the convolutions and use the commutativity $S\star_\Lambda T=T\star_\Lambda S$ to get
\begin{align*}
   c\ast_\Lambda (S\star_\Lambda T)(\lambda)&=  c\ast_\Lambda (T\star_\Lambda S)(\lambda) \\
   &= \sum_{\lambda'\in \Lambda} c(\lambda') \tr(T\alpha_{\lambda-\lambda'}(\check{S}))  \\
   &= \tr\left(T \sum_{\lambda'\in \Lambda} c(\lambda') \alpha_{\lambda-\lambda'}(\check{S}) \right) \\
   &= \tr\left(T \alpha_{\lambda} \left(\sum_{\lambda'\in \Lambda} c(\lambda') \alpha_{-\lambda'}(\check{S})\right) \right) \quad \text{ by Lemma \ref{lem:translation}}  \\
   &= \tr\left(T \alpha_{\lambda} \left(P \sum_{\lambda'\in \Lambda} c(\lambda') \alpha_{\lambda'}(S) P \right) \right) \\
   &= T\star_\Lambda (c\star_\Lambda S) \quad \text{ by \eqref{eq:convseqop} and \eqref{eq:convopop}}\\
   &=(c\star_\Lambda S) \star_\Lambda T \quad \text{ by commutativity}.
\end{align*}
We have used the easily checked relation $\alpha_{-\lambda'} (\check{S})=P\alpha_{\lambda'} (S) P$. 
For the second part, we find that
\begin{align*}
		(c\ast_\Lambda d)\star_\Lambda T&= \sum_{\lambda\in \Lambda} (c\ast_\Lambda d)(\lambda) \alpha_\lambda(T) \\
		&= \sum_{\lambda\in \Lambda} \sum_{\lambda'\in \Lambda} c(\lambda')
d(\lambda-\lambda')\alpha_\lambda(T) \\
&= \sum_{\lambda'\in \Lambda} c(\lambda') \sum_{\lambda \in \Lambda}
d(\lambda-\lambda')\alpha_\lambda(T) \\
&= \sum_{\lambda'\in \Lambda} c(\lambda') \alpha_{\lambda'} (d\star_\Lambda T)
=c \star_\Lambda (d\star_\Lambda T).
	\end{align*}
	To pass to the last line we have used the relation $\alpha_{\lambda '} (d\star_\Lambda T)=\sum_{\lambda} d(\lambda-\lambda ')\alpha_\lambda (T)$, which is easily verified. 
\end{proof}
\begin{rem}
Part $(ii)$ of this result along with the trivial estimate $\|c\star_\Lambda T\|_{\tco}\leq \|c\|_{\ell^1} \|T\|_\tco$ shows that $\tco$ is a \textit{Banach module} (see \cite{Graven:1974}) over $\ell^1(\Lambda)$ if we define the action of $c\in \ell^{1}(\Lambda)$ on $T\in \tco$ by $c\star_\Lambda T$. The same proofs also show that this is true when $\tco$ is replaced by $\beauty$ or any Schatten class $\SC^p$ for $1\leq p \leq \infty$.	
\end{rem}

\begin{exmp}
Let $\varphi,\xi \in L^2(\Rd)$ and $c\in \ell^1(\Lambda)$, and define $S=\xi\otimes \xi$ and $T=\check{\varphi}\otimes \check{\varphi}$. If we use  \eqref{eq:tworankone} to simplify $S \star_\Lambda T$ and \eqref{eq:generalwithrankone} to simplify $(c\star_\Lambda S)\star_\Lambda T$, the first part of the result above becomes
\begin{equation} \label{eq:cnn}
  c\ast_\Lambda |V_\varphi \xi|^2(\lambda)=\inner{(c\star_\Lambda \xi\otimes \xi)\pi(\lambda)\varphi}{\pi(\lambda)\varphi}_{L^2}.
\end{equation}
In words, the convolution of a sequence $c$ with samples of a spectrogram $|V_\varphi \xi|^2$ can be described using the action of a Gabor multiplier $c\star (\xi\otimes \xi)$. In applications of convolutional neural networks to audio processing, one often considers the spectrogram of an audio signal as the input to the network. Convolutions of sequences with samples of  spectrograms therefore appear naturally in such networks, and the connection \eqref{eq:cnn} has been exploited in this context -- see the proof of \cite[Thm. 1]{Dorfler:2018}.
\end{exmp}
\subsection{Young's inequality}
The convolutions in \eqref{eq:convseqop} and \eqref{eq:convopop} can be defined for more general sequences and operators by establishing a version of Young's inequality \cite[Thm. 1.2.1]{Grochenig:2001}. In the continuous case such an inequality was established by Werner \cite{Werner:1984} using the $L^p$-norms of functions and Schatten-$p$-norms of operators. In the discrete case, it is not always possible to use the Schatten-$p$-norms, since Proposition \ref{prop:convwelldefined} requires $S\in \beauty$. We will therefore always require that one of the operators belongs to $\beauty$.

 A Young's inequality for Schatten classes can then be established by first extending the domains of the convolutions by duality. If $S\in \beauty$ and $c\in \ell^\infty(\Lambda)$, we define $c\star_\Lambda S\in \bo$ by 
\begin{equation} \label{eq:dualconvolutions}
  \inner{c\star_{\Lambda} S}{R}_{\bo,\tco}:=\inner{c}{ R \star_{\Lambda} \check{S}^* }_{\ell^\infty,\ell^1} \text{ for any } R\in \tco.
\end{equation}
and if $S\in \beauty$ and $T\in \bo=\SC^\infty$ we define $T\star_\Lambda S\in \ell^\infty(\Lambda)$ by
\begin{equation} \label{eq:dualconvolutions2}
  \inner{T\star_\Lambda S}{c}_{\ell^\infty(\Lambda),\ell^1(\Lambda)}:=\inner{T}{c\star_{\Lambda} \check{S}^*}_{\bo,\tco} \text{ for any } c\in \ell^1(\Lambda).
\end{equation}
It is a simple exercise to show that these definitions define elements of $\bo$ and $\ell^\infty(\Lambda)$ satisfying $\|c\star_{\Lambda} S\|_{\bo}\lesssim \|c\|_{\ell^\infty}\|S\|_{\beauty}$ and $\|T\star_\Lambda S\|_{\ell^\infty}\leq \|T\|_{\bo}\|S\|_\beauty$, and that they agree with \eqref{eq:convseqop} and \eqref{eq:convopop} when $c\in \ell^1 (\Lambda)$ or $T\in \tco$.
A standard (complex) interpolation argument then gives the following result, since $(\ell^1(\Lambda),\ell^{\infty}(\Lambda))_{\theta}=\ell^p(\Lambda)$ and $(\SC^1,\SC^{\infty})_{\theta}=\SC^{p}$ with $\frac{1}{p}=1-\theta$ \cite{Bergh:1976}. For Gabor multipliers the second part of this result is well-known  \cite[Thm. 5.4.1]{Feichtinger:2003}, and a weaker version of the first part is known for $p=1,2,\infty$ \cite[Thm. 5.8.3]{Feichtinger:2003}.

\begin{prop}[Young's inequality] \label{prop:youngschatten}
	Let $S\in \beauty$ and $1\leq p \leq \infty$.
	\begin{enumerate}[(i)]
		\item If $T\in \SC^p$, then $\|T\star_\Lambda S\|_{\ell^p}\lesssim \|T\|_{\tco^p}\|S\|_{\beauty}$.
		\item If $c\in \ell^p(\Lambda)$, then $\|c\star_\Lambda S\|_{\SC^p}\lesssim \|c\|_{\ell^p}\|S\|_{\beauty}$.
	\end{enumerate}
\end{prop}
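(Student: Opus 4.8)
The plan is to deduce both parts by complex interpolation from the endpoint cases $p=1$ and $p=\infty$, which are already in hand. For $(i)$ the case $p=1$ is Proposition~\ref{prop:convwelldefined} combined with commutativity of the operator convolution (so that $T\star_\Lambda S=S\star_\Lambda T$), and the case $p=\infty$ is the estimate $\|T\star_\Lambda S\|_{\ell^\infty}\lesssim\|T\|_{\bo}\|S\|_{\beauty}$ recorded after \eqref{eq:dualconvolutions2}. For $(ii)$ the case $p=1$ is the elementary bound $\|c\star_\Lambda S\|_{\tco}\leq\|c\|_{\ell^1}\|S\|_{\tco}$ together with the continuity of $\iota:\beauty\hookrightarrow\tco$, and the case $p=\infty$ is $\|c\star_\Lambda S\|_{\bo}\lesssim\|c\|_{\ell^\infty}\|S\|_{\beauty}$ recorded after \eqref{eq:dualconvolutions}.

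\textbf{Part (i).} First I would fix $S\in\beauty$ once and for all and regard $T\mapsto T\star_\Lambda S$ as a single linear operator. Note that $T\star_\Lambda S(\lambda)=\tr(T\alpha_\lambda(\check{S}))$ is well defined for every $T\in\bo=\SC^\infty$, since $\alpha_\lambda(\check{S})\in\beauty\subset\tco$ and hence $T\alpha_\lambda(\check S)\in\tco$; this gives a bounded operator $\SC^\infty\to\ell^\infty(\Lambda)$ of norm $\lesssim\|S\|_{\beauty}$ whose restriction to $\SC^1=\tco$ maps boundedly into $\ell^1(\Lambda)$ with norm $\lesssim\|S\|_{\beauty}$. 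Here one uses that \eqref{eq:dualconvolutions2} agrees with \eqref{eq:convopop} on $\tco$, so $(\SC^1,\SC^\infty)$ and $(\ell^1(\Lambda),\ell^\infty(\Lambda))$ genuinely form compatible Banach couples for this map. Complex interpolation together with $(\SC^1,\SC^\infty)_{[\theta]}=\SC^p$ and $(\ell^1(\Lambda),\ell^\infty(\Lambda))_{[\theta]}=\ell^p(\Lambda)$ for $1/p=1-\theta$ \cite{Bergh:1976} then yields $(i)$, with implicit constant the product $M_0^{1-\theta}M_1^\theta$ of the two endpoint constants (Calder\'on's constant being $1$), hence depending only on $\Lambda$.

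\textbf{Part (ii).} This is treated symmetrically: for fixed $S\in\beauty$ the map $c\mapsto c\star_\Lambda S$ defined on $\ell^\infty(\Lambda)$ via \eqref{eq:dualconvolutions} is bounded $\ell^\infty(\Lambda)\to\bo$ with norm $\lesssim\|S\|_{\beauty}$, and since \eqref{eq:dualconvolutions} reduces to \eqref{eq:convseqop} on $\ell^1(\Lambda)$, its restriction there is bounded $\ell^1(\Lambda)\to\SC^1$ with norm $\lesssim\|S\|_{\beauty}$; interpolating between these endpoints exactly as above gives $\|c\star_\Lambda S\|_{\SC^p}\lesssim\|c\|_{\ell^p}\|S\|_{\beauty}$. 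For $1<p<\infty$ one should also observe, by applying the just-proved bound to finite truncations $c_N$ of $c$ (for which \eqref{eq:convseqop} is an honest finite sum in $\tco\subset\SC^p$) and using linearity, that $\sum_\lambda c(\lambda)\alpha_\lambda(S)$ then converges in $\SC^p$-norm to $c\star_\Lambda S$, so the object produced by interpolation is the expected one.

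\textbf{Main obstacle.} The genuinely delicate point is not the interpolation but the bookkeeping that makes it applicable: one must verify that the convolutions extended by duality in \eqref{eq:dualconvolutions} and \eqref{eq:dualconvolutions2} are consistent with the original definitions \eqref{eq:convseqop} and \eqref{eq:convopop} on the overlaps $\SC^1\subset\SC^\infty$ and $\ell^1(\Lambda)\subset\ell^\infty(\Lambda)$, and that the resulting bounds depend on $S$ only through $\|S\|_{\beauty}$ and otherwise only on $\Lambda$. Both are routine and the consistency was already remarked upon in the text preceding the proposition, so I anticipate no serious difficulty: everything collapses onto the two endpoint inequalities, which are available.
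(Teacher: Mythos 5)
Your proposal is correct and follows essentially the same route as the paper: the endpoint bounds at $p=1$ (Proposition \ref{prop:convwelldefined} plus commutativity for $(i)$, the trivial $\ell^1$--$\tco$ bound plus $\beauty\hookrightarrow\tco$ for $(ii)$) and at $p=\infty$ (the duality extensions \eqref{eq:dualconvolutions} and \eqref{eq:dualconvolutions2}), followed by complex interpolation via $(\SC^1,\SC^{\infty})_{\theta}=\SC^{p}$ and $(\ell^1(\Lambda),\ell^{\infty}(\Lambda))_{\theta}=\ell^p(\Lambda)$. The consistency checks you flag as the "main obstacle" are exactly the ones the paper dispatches in the text preceding the proposition, so there is nothing further to add.
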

\begin{rem}
	If $1\in \ell^\infty(\Lambda)$ is given by $1(\lambda)=1$ for any $\lambda$, then Feichtinger observed in \cite[Thm. 5.15]{Feichtinger:2002} that $\phi\in S_0(\Rd)$ generates a so-called tight Gabor frame if and only if the Gabor multiplier $1\star_\Lambda (\phi\otimes \phi)$ is the identity operator $I$ in $\bo$. A similar result holds in the more general case: if $S\in \beauty$, then $1\star_\Lambda S^*S=I$ if and only if $S$ generates a tight \textit{Gabor g-frame}, recently introduced in \cite{Skrettingland:2019}.
\end{rem}
We may also use duality to define the convolution $T\star_\Lambda S\in \ell^\infty(\Lambda)$ of $S\in \beauty$ with $T\in \beast$ by 
\begin{equation}  \label{eq:dualconvolutions3}
  \inner{T\star_\Lambda S}{c}_{\ell^\infty,\ell^1}:=\inner{T}{c\star_{\Lambda} \check{S}^*}_{\beast,\beauty} \text{ for any } c\in \ell^1(\Lambda),
\end{equation}
which agrees with \eqref{eq:dualconvolutions2} when $T\in \bo \subset \beast$ and satisfies $\|S\star_\Lambda T\|_{\ell^\infty}\leq \|S\|_\beauty \|T\|_{\beast}$.
We end this section by showing that the space $c_0(\Lambda)$ of sequences vanishing at infinity corresponds to compact operators under convolutions with $S\in \beauty$. The second part of this statement is due to Feichtinger \cite[Thm. 5.15]{Feichtinger:2002} for the special case of Gabor multipliers. 

\begin{prop} 
	Let $S\in \beauty.$ If $T$ is a compact operator, then $T\star_\Lambda S\in c_0(\Lambda)$. If $c\in c_0(\Lambda),$ then $c\star_\Lambda S$ is a compact operator on $L^2(\Rd)$. 
\end{prop}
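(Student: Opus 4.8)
The plan is to prove both statements by a density argument: reduce the compact / $c_0(\Lambda)$ case to the trace class / $\ell^1(\Lambda)$ case already handled by Propositions \ref{prop:convwelldefined} and \ref{prop:youngschatten}, and then pass to the limit using the boundedness of the convolutions between $\bo$ and $\ell^\infty(\Lambda)$ together with the fact that $c_0(\Lambda)$ and the compact operators are norm-closed subspaces of $\ell^\infty(\Lambda)$ and $\bo$ respectively.

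For the first statement, I would first note that a finite-rank operator $T$ is trace class, so Proposition \ref{prop:convwelldefined} gives $T\star_\Lambda S\in \ell^1(\Lambda)\subset c_0(\Lambda)$. For a general compact operator $T$, I would invoke the singular value decomposition $T=\sum_n s_n(T)\,\psi_n\otimes\phi_n$ recalled in the text: the partial sums $T_N=\sum_{n\le N} s_n(T)\,\psi_n\otimes\phi_n$ are finite-rank and $\|T-T_N\|_{\bo}=\sup_{n>N}s_n(T)\to 0$. By bilinearity of $\star_\Lambda$ and the estimate $\|R\star_\Lambda S\|_{\ell^\infty}\lesssim \|R\|_{\bo}\|S\|_{\beauty}$ (Proposition \ref{prop:youngschatten}(i) with $p=\infty$, equivalently the estimate following \eqref{eq:dualconvolutions2}) applied to $R=T-T_N$, we get $\|T\star_\Lambda S-T_N\star_\Lambda S\|_{\ell^\infty}\to 0$. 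Since each $T_N\star_\Lambda S$ lies in the closed subspace $c_0(\Lambda)$ of $\ell^\infty(\Lambda)$, so does the limit $T\star_\Lambda S$.

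For the second statement I would argue symmetrically. If $c$ has finite support, then $c\star_\Lambda S=\sum_{\lambda} c(\lambda)\,\alpha_\lambda(S)$ is a finite linear combination of the operators $\alpha_\lambda(S)$, which are trace class (hence compact) since $S\in\beauty\subset\tco$ and $\alpha_\lambda$ is an isometry on $\tco$ by Lemma \ref{lem:translation}(iii); thus $c\star_\Lambda S$ is compact. For general $c\in c_0(\Lambda)$, let $c_N$ be the restriction of $c$ to the $N$-th set in a fixed exhausting sequence of finite subsets of $\Lambda$; then $c_N$ has finite support and $\|c-c_N\|_{\ell^\infty}\to 0$ precisely because $c\in c_0(\Lambda)$. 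Using bilinearity and the estimate $\|d\star_\Lambda S\|_{\bo}\lesssim \|d\|_{\ell^\infty}\|S\|_{\beauty}$ from the remark following \eqref{eq:dualconvolutions} applied to $d=c-c_N$, we get $\|c\star_\Lambda S-c_N\star_\Lambda S\|_{\bo}\to 0$. As the compact operators form a norm-closed subspace of $\bo$ and each $c_N\star_\Lambda S$ is compact, $c\star_\Lambda S$ is compact.

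I do not expect a serious obstacle. The only points requiring a little care are: (a) that the duality-defined convolutions of \eqref{eq:dualconvolutions} and \eqref{eq:dualconvolutions2} restrict to the directly-defined ones on $\tco$ and $\ell^1(\Lambda)$ — which is asserted in the text right after those equations — so that the whole approximation takes place inside one consistent bilinear map to which the above norm estimates apply; and (b) the closedness of $c_0(\Lambda)$ in $\ell^\infty(\Lambda)$ and of the compact operators in $\bo$, which is exactly what transfers the property to the limit.
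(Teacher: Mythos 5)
Your proof is correct. For the second statement it is essentially identical to the paper's argument: truncate $c$ to finite subsets, observe each truncation gives a compact (indeed trace class) operator, and pass to the limit in $\bo$ using the bound $\|d\star_\Lambda S\|_{\bo}\lesssim\|d\|_{\ell^\infty}\|S\|_{\beauty}$ and the norm-closedness of the compact operators. For the first statement, however, you take a genuinely different route: the paper simply cites the continuous-setting result \cite[Prop.~4.6]{Luef:2018c} that the \emph{function} $T\star S$ lies in $C_0(\Rdd)$ for compact $T$ and $S\in\beauty$, and then restricts to $\Lambda$; you instead stay entirely within the discrete framework, approximating $T$ in operator norm by the finite-rank partial sums of its singular value decomposition, using Proposition~\ref{prop:convwelldefined} to place each $T_N\star_\Lambda S$ in $\ell^1(\Lambda)\subset c_0(\Lambda)$, and passing to the limit in $\ell^\infty(\Lambda)$. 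Both are valid. The paper's version is shorter given the imported result, while yours is self-contained, makes the two halves of the proposition pleasingly symmetric, and correctly flags the one point needing care, namely that the duality-defined convolution of \eqref{eq:dualconvolutions2} agrees with the direct definition \eqref{eq:convopop} on $\tco$, so that the approximation takes place inside a single consistent bilinear map.
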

\begin{proof}
	By \cite[Prop. 4.6]{Luef:2018c}, the \textit{function} $T\star S$ belongs to the space $C_0(\Rdd)$ of continuous functions vanishing at infinity. Since $T\star_\Lambda S$ is simply the restriction of $T\star S$ to $\Lambda$, it follows that $T\star_\Lambda S\in c_0(\Lambda)$. For the second part, let $c_N$ be the sequence 
	\begin{equation*}
  c_N(\lambda)=\begin{cases}
  	c(\lambda) \text{ if } |\lambda|<N \\
  	0 \text{ otherwise.}
  \end{cases}
  \end{equation*}
  Then $c_N\star_\Lambda S=\sum_{|\lambda|<N} c(\lambda) \alpha_\lambda(S)$ is a compact operator for each $N\in \N$, and by Proposition \ref{prop:youngschatten} and the bilinearity of convolutions
  \begin{equation*}
  \|c\star_\Lambda S - c_N \star_\Lambda S\|_{\bo}\leq \|c-c_N\|_{\ell^\infty} \|S\|_{\beauty} \to 0 \text{ as } N\to \infty.
\end{equation*}
Hence $c\star_\Lambda S$ is the limit in the operator topology of compact operators, and is therefore itself compact.
\end{proof}

\section{Fourier transforms} \label{sec:fouriertransforms}
In \cite{Werner:1984}, Werner observed that if one defines a Fourier transform of an operator $S\in \tco$ to be the function
\begin{equation*}
  \F_W(S)(z):= e^{-\pi i x\cdot \omega} \tr(\pi(-z)S) \quad \text{ for } z=(x,\omega)\in \Rdd,
\end{equation*}
then the formulas 
\begin{align} \label{eq:FTofconvolutionscontinuous}
  &\F_W(f\star S)=\F_\sigma(f) \F_W(S), && \F_\sigma(S\star T)=\F_W(S)\F_W(T)
\end{align}
hold for $f\in L^1(\Rdd)$ and $S,T\in \tco$.
The transform $\F_W$, called the \textit{Fourier-Wigner transform} (or the Fourier-Weyl transform \cite{Werner:1984}) is an isomorphism $\F_W:\beauty \to S_0(\Rdd)$, can be extended to a unitary map $\F_W:\HS\to L^2(\Rdd)$, and to an isomorphism $\F_W:\beast \to S_0'(\Rdd)$ by defining $\F_W(S)$ for $S\in \beast$ by duality\cite[Cor. 7.6.3]{Feichtinger:1998}:
\begin{equation} \label{eq:fwdual}
  \inner{F_W(S)}{f}_{S_0',S_0}:= \inner{S}{\rho(f)}_{\beast,\beauty}\quad \text{ for any }f\in S_0(\Rdd).
\end{equation}
Here $\rho:S_0(\Rdd)\to \beauty$ is the inverse of $\F_W$. In fact, $\F_W$ and the Weyl transform are related by a symplectic Fourier transform: for any $S\in \beast$ we have 
\begin{equation*}
  \F_W(S)=\F_\sigma (\weyl_S),
\end{equation*}
where $\weyl_S$ is the Weyl symbol of $S$. As an important special case, the Fourier-Wigner transform of a rank-one operator $\phi\otimes \psi$ is
\begin{equation} \label{eq:fwrankone}
  \F_W(\phi\otimes \psi)(x,\omega)=e^{\pi i x\cdot\omega}V_\psi \phi(x,\omega).
\end{equation}

Since we have defined convolutions of operators and sequences, it is natural to ask whether a version of \eqref{eq:FTofconvolutionscontinuous} holds in our setting. We start by defining a suitable Fourier transform of sequences.
\subsubsection*{Symplectic Fourier series}
For the purposes of this paper, we identify the dual group $\widehat{\R^{2d}}$ with $\R^{2d}$ by the bijection $\R^{2d} \ni z\mapsto \chi_z\in \widehat{\R^{2d}}$, where $\chi_z$ is the \textit{symplectic} character\footnote{Phase space, which in this paper is $\Rdd$, is more properly described by (the isomorphic) space $\R^d\times \widehat{\R^d}$. The symplectic characters appear because they are the natural way of identifying the group $\R^d\times \widehat{\R^d}$ with its dual group.} $\chi_{z}(z')=e^{2\pi i \sigma(z,z')}$. Given a lattice $\Lambda \subset \Rdd$, it follows that the dual group of $\Lambda$ is identified with $\Rdd/\Lambda^\circ$ (see \cite[Prop. 3.6.1]{Deitmar:2014}), where $\Lambda^\circ$ is the annihilator group
\begin{align*}
   \Lambda^\circ&=\{\lambda^\circ \in \Rdd : \chi_{\lambda^\circ}(\lambda)=1 \text{ for any } \lambda \in \Lambda\} \\
   &= \{\lambda^\circ \in \Rdd : e^{2\pi i \sigma(\lambda^\circ,\lambda)}=1 \text{ for any } \lambda \in \Lambda\}.
\end{align*}
The group $\Lambda^\circ$ is itself a lattice, namely the so-called \textit{adjoint lattice} of $\Lambda$ from \cite{Feichtinger:1998,Rieffel:1988}.
Given this identification of the dual group of $\Lambda$, the Fourier transform of $c\in \ell^1(\Lambda)$ is the symplectic Fourier series
\begin{equation*}
  \F_\sigma^\Lambda(c)(\dot{z}):=\sum_{\lambda \in \Lambda} c(\lambda)e^{2\pi i \sigma(\lambda,z)}.
\end{equation*}
Here $\dot{z}$ denotes the image of $z\in \Rdd$ under the natural quotient map $\Rdd\to \Rdd/\Lambda^\circ$, so $ \F_\sigma^\Lambda(c)$ is a function on $\Rdd/\Lambda^\circ$. 
If we denote by $A(\Rdd/\Lambda^\circ)$ the Banach space of functions on $\Rdd/\Lambda^\circ$ with symplectic Fourier coefficients in $\ell^1(\Lambda)$, the Feichtinger algebra has the following property \cite[Thm. 7 B)]{Feichtinger:1981}.
 
\begin{lem}  \label{lem:s0periodization}
If $\Lambda$ is a lattice, the \textit{periodization operator} $P_\Lambda:S_0(\Rdd)\to A(\Rdd/\Lambda)$ defined by $$P_\Lambda(f)(\dot{z})=|\Lambda|\sum_{\lambda \in \Lambda} f(z+\lambda)\quad \text{ for } z\in \Rdd$$ is continuous and surjective.
\end{lem}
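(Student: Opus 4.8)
The plan is to prove continuity by identifying the symplectic Fourier coefficients of $P_\Lambda f$ with the restriction of $\F_\sigma f$ to $\Lambda^\circ$, and to prove surjectivity by constructing an explicit bounded right inverse out of a "smooth partition of unity" $\psi\in S_0(\Rdd)$ with $P_\Lambda\psi\equiv 1$. For the first step, expand the $\Lambda$-periodic function $P_\Lambda f$ in the characters $\chi_{\lambda^\circ}$, $\lambda^\circ\in\Lambda^\circ$, of $\Rdd/\Lambda$. Using that the normalized Haar measure on $\Rdd/\Lambda$ equals $|\Lambda|^{-1}$ times Lebesgue measure on a fundamental domain, a routine unfolding of the integral over the fundamental domain against the sum $\sum_{\lambda\in\Lambda} f(z+\lambda)$ shows that the $|\Lambda|$ in the definition of $P_\Lambda$ is exactly what is needed to make the $\lambda^\circ$-th symplectic Fourier coefficient of $P_\Lambda f$ equal to $\F_\sigma f(\lambda^\circ)$; the interchange of sum and integral is justified because $\sum_{\lambda}|f(\cdot+\lambda)|\in L^1(\Rdd/\Lambda)$ for $f\in S_0(\Rdd)\subset\Ld$. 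Thus the symplectic Fourier coefficients of $P_\Lambda f$ are $\{\F_\sigma f(\lambda^\circ)\}_{\lambda^\circ\in\Lambda^\circ}$. Since $\F_\sigma$ is bounded on $S_0(\Rdd)$ we get $\F_\sigma f\in S_0(\Rdd)$ with $\|\F_\sigma f\|_{S_0}\lesssim\|f\|_{S_0}$, and Lemma \ref{lem:s0sampling} applied to the lattice $\Lambda^\circ$ gives $\{\F_\sigma f(\lambda^\circ)\}_{\lambda^\circ}\in\ell^1(\Lambda^\circ)$ with $\ell^1$-norm $\lesssim\|f\|_{S_0}$. This is precisely the assertion that $P_\Lambda f\in A(\Rdd/\Lambda)$ with $\|P_\Lambda f\|_{A(\Rdd/\Lambda)}\lesssim\|f\|_{S_0}$.

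For surjectivity, first build $\psi\in S_0(\Rdd)$ with $P_\Lambda\psi\equiv 1$. Take any nonnegative, compactly supported $\varphi\in S_0(\Rdd)$ with $\int\varphi>0$ (a Gaussian also works). Then $P_\Lambda\varphi$ is a strictly positive continuous function on the compact group $\Rdd/\Lambda$ which, by the continuity already proved, lies in the Wiener algebra $A(\Rdd/\Lambda)$. By Wiener's lemma for $A(\Rdd/\Lambda)$ its reciprocal $h:=1/P_\Lambda\varphi$ again lies in $A(\Rdd/\Lambda)$, say $h=\sum_{\lambda^\circ\in\Lambda^\circ}d(\lambda^\circ)\chi_{\lambda^\circ}$ with $d\in\ell^1(\Lambda^\circ)$. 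Set $\psi(z):=h(\dot z)\varphi(z)=\sum_{\lambda^\circ}d(\lambda^\circ)\chi_{\lambda^\circ}(z)\varphi(z)$. Multiplication by the character $\chi_{\lambda^\circ}$ is a modulation operator, which acts isometrically on $S_0(\Rdd)$, so the series converges absolutely in $S_0(\Rdd)$ and hence $\psi\in S_0(\Rdd)$ with $\|\psi\|_{S_0}\le\|d\|_{\ell^1}\|\varphi\|_{S_0}$; moreover $P_\Lambda\psi(\dot z)=|\Lambda|\,h(\dot z)\sum_{\lambda}\varphi(z+\lambda)=h(\dot z)P_\Lambda\varphi(\dot z)=1$. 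Now given $g\in A(\Rdd/\Lambda)$ with $g=\sum_{\lambda^\circ}c(\lambda^\circ)\chi_{\lambda^\circ}$, $c\in\ell^1(\Lambda^\circ)$, put $f(z):=g(\dot z)\psi(z)=\sum_{\lambda^\circ}c(\lambda^\circ)\chi_{\lambda^\circ}(z)\psi(z)$; the same modulation-invariance argument gives $f\in S_0(\Rdd)$ with $\|f\|_{S_0}\le\|g\|_{A(\Rdd/\Lambda)}\|\psi\|_{S_0}$, and since $g$ is $\Lambda$-periodic, $P_\Lambda f(\dot z)=|\Lambda|\,g(\dot z)\sum_{\lambda}\psi(z+\lambda)=g(\dot z)P_\Lambda\psi(\dot z)=g(\dot z)$. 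Hence $P_\Lambda f=g$, so $P_\Lambda$ is surjective (indeed $g\mapsto f$ is a bounded right inverse).

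The unfolding computation of the Fourier coefficients and the norm estimates are routine. The part that carries the real content is surjectivity, where two ingredients are needed: Wiener's lemma for the Wiener algebra $A(\Rdd/\Lambda)\cong A(\mathbb{T}^{2d})$ in order to invert $P_\Lambda\varphi$, and the fact that pointwise multiplication of a function in $S_0(\Rdd)$ by a $\Lambda$-periodic function with absolutely summable symplectic Fourier coefficients stays in $S_0(\Rdd)$ with the obvious norm bound. Both reduce to the modulation invariance $\|M_\eta\psi\|_{S_0}=\|\psi\|_{S_0}$ of the Feichtinger algebra (which follows immediately from the definition of $\|\cdot\|_{S_0}$, since a modulation merely translates $V_{\varphi_0}\psi$ in the second variable). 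I expect this last point -- tracking why the products $(h\circ q)\varphi$ and $(g\circ q)\psi$ remain in $S_0(\Rdd)$ -- to be the only step worth spelling out in detail; the rest is bookkeeping.
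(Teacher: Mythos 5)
The paper does not actually prove this lemma --- it is quoted from Feichtinger's 1981 paper as \cite[Thm. 7 B)]{Feichtinger:1981} --- so there is no internal proof to compare against and your argument has to stand on its own. Both halves are structurally sound. The continuity half (identifying the $\lambda^\circ$-th symplectic Fourier coefficient of $P_\Lambda f$ with $\F_\sigma f(\lambda^\circ)$ by unfolding over a fundamental domain, then combining the invariance of $S_0(\Rdd)$ under $\F_\sigma$ with Lemma \ref{lem:s0sampling} applied to $\Lambda^\circ$) is correct, and is in effect a direct proof of the Poisson summation formula that the paper later states as Theorem \ref{thm:poisson}. The surjectivity half --- building $\psi$ with $P_\Lambda\psi\equiv 1$ by inverting $P_\Lambda\varphi$ in $A(\Rdd/\Lambda)$ via Wiener's lemma and pulling the periodic factor back as an absolutely convergent series of modulations of $\varphi$, using that modulations are isometries of $S_0(\Rdd)$ --- is the standard route and the norm bookkeeping is right.

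One step as written would fail: it is not true that $P_\Lambda\varphi$ is strictly positive for \emph{any} nonnegative, compactly supported $\varphi\in S_0(\Rdd)$ with $\int\varphi>0$. If the support of $\varphi$ is small relative to $\Lambda$ (say $\Lambda=\Z^2$ in $\R^2$ and $\varphi$ a bump supported in a ball of radius $1/4$ about the origin), then $\sum_{\lambda}\varphi(z+\lambda)$ vanishes on an open set, $1/P_\Lambda\varphi$ does not exist, and Wiener's lemma cannot be invoked. You need the extra hypothesis that the $\Lambda$-translates of $\{\varphi>0\}$ cover $\Rdd$ --- or simply take $\varphi$ to be the Gaussian, as your parenthetical suggests, since it is strictly positive everywhere and belongs to $S_0(\Rdd)$. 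With that repair the proof is complete.
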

\begin{rem}
\begin{enumerate}[(i)]
	\item Since $|\Lambda^\circ|=\frac{1}{|\Lambda|}$ \cite[Lem. 7.7.4]{Feichtinger:1998}, we have $$P_{\Lambda^\circ}(f)(\dot{z})=\frac{1}{|\Lambda|}\sum_{\lambda^\circ \in \Lambda} f(z+\lambda^\circ).$$
	\item One may define Feichtinger's algebra $S_0(G)$ for any locally compact abelian group $G$\cite{Feichtinger:1981}. In fact, all our function spaces besides $L^2(\Rd)$ are examples of Feichtinger's algebra, since $S_0(\Lambda)=\ell^1(\Lambda)$ and $S_0(\Rdd/\Lambda^\circ)=A(\Rdd/\Lambda^\circ).$
\end{enumerate}
\end{rem}
When we identify the dual group of $\Lambda$ with $\Rdd/\Lambda^\circ$, the Poisson summation formula for functions in $S_0(\Rdd)$ takes the following form.
\begin{thm}[Poisson summation] \label{thm:poisson}
	Let $\Lambda$ be a lattice in $\Rdd$ and assume that $f\in S_0(\Rdd)$. Then 
	\begin{equation*} 
\frac{1}{|\Lambda|} \sum_{\lambda^\circ \in \Lambda^\circ} f(z+\lambda^\circ) =   \sum_{\lambda \in \Lambda} \F_\sigma(f)(\lambda)e^{2\pi i \sigma(\lambda,z)}\text{ for } z\in  \Rdd.
\end{equation*}
\end{thm}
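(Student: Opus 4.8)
The plan is to view both sides as elements of the Banach space $A(\Rdd/\Lambda^\circ)$ of functions with absolutely summable symplectic Fourier coefficients, and to prove the equality by checking that the two functions have the same Fourier coefficients; since functions in $A(\Rdd/\Lambda^\circ)$ are continuous and coincide with the uniformly convergent sum of their Fourier series, this suffices. First I would identify the two sides. By the Remark following Lemma \ref{lem:s0periodization}, the left-hand side is exactly the periodization $P_{\Lambda^\circ}(f)(\dot z)$, which lies in $A(\Rdd/\Lambda^\circ)$ by Lemma \ref{lem:s0periodization}. For the right-hand side, $\F_\sigma$ maps $S_0(\Rdd)$ to $S_0(\Rdd)$, so Lemma \ref{lem:s0sampling} gives $\F_\sigma(f)\vert_\Lambda \in \ell^1(\Lambda)$, and the right-hand side is by definition the symplectic Fourier series $\F_\sigma^\Lambda(\F_\sigma(f)\vert_\Lambda)(\dot z)$, again an element of $A(\Rdd/\Lambda^\circ)$.

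Next I would fix the identification of the dual group. Because the adjoint lattice is involutive, $(\Lambda^\circ)^\circ = \Lambda$, the function $z \mapsto e^{2\pi i \sigma(\lambda,z)}$ descends to $\Rdd/\Lambda^\circ$ for each $\lambda \in \Lambda$, these characters exhaust $\widehat{\Rdd/\Lambda^\circ}$, and they are orthonormal for the normalized Haar measure on $\Rdd/\Lambda^\circ$ (nontriviality of $z\mapsto e^{2\pi i \sigma(\lambda - \mu,z)}$ for $\lambda \neq \mu$ follows since $\sigma(\lambda-\mu,\cdot)$ is a nonzero linear functional, hence takes non-integer values). Using $\F_\sigma(f)\vert_\Lambda \in \ell^1(\Lambda)$ to interchange summation and integration, the $\lambda$-th Fourier coefficient of the right-hand side is therefore exactly $\F_\sigma(f)(\lambda)$.

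The remaining step, the \emph{heart of the matter}, is to compute the $\lambda$-th Fourier coefficient of $P_{\Lambda^\circ}(f)$ and see that it is also $\F_\sigma(f)(\lambda)$. I would write the integral over $\Rdd/\Lambda^\circ$ against the normalized Haar measure as $|\Lambda|$ times the Lebesgue integral over a fundamental domain $D$ for $\Lambda^\circ$ (which has Lebesgue measure $|\Lambda^\circ| = 1/|\Lambda|$), use that $f\in S_0(\Rdd)\subset L^1(\Rdd)$ so that $\sum_{\lambda^\circ\in\Lambda^\circ}|f(\cdot+\lambda^\circ)|\in L^1(D)$ to exchange the sum over $\Lambda^\circ$ with the integral, exploit the $\Lambda^\circ$-periodicity of $z\mapsto e^{-2\pi i\sigma(\lambda,z)}$, and translate each summand so that the translates $D+\lambda^\circ$ tile $\Rdd$; the coefficient collapses to $\int_{\Rdd} f(z)e^{-2\pi i \sigma(\lambda,z)}\,dz = \F_\sigma(f)(\lambda)$. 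Since both sides lie in $A(\Rdd/\Lambda^\circ)$ and have identical Fourier coefficients, they agree everywhere on $\Rdd/\Lambda^\circ$, which is the assertion. The only genuine difficulty is the bookkeeping of the Haar-measure normalization and of the symplectic self-duality of $\Rdd$; all the convergence needed is handed to us by the $S_0$-theory through Lemmas \ref{lem:s0sampling} and \ref{lem:s0periodization}. (Alternatively one could deduce the statement from Poisson summation for $S_0$ on a general locally compact abelian group, as in \cite{Feichtinger:1981}, after identifying $\Rdd$ with its own dual via $\sigma$; the argument above keeps the computation self-contained.)
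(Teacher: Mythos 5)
Your proof is correct, but it takes a different route from the paper: the paper's proof is essentially a citation, invoking the abstract Poisson summation formula of \cite[Thm. 3.6.3]{Deitmar:2014} with $A=\Rdd$, $B=\Lambda^\circ$ and $(\Lambda^\circ)^\circ=\Lambda$, and then using Lemma \ref{lem:s0periodization} only to upgrade the identity to pointwise equality for \emph{every} $z$. You instead reprove the statement from scratch by placing both sides in $A(\Rdd/\Lambda^\circ)$ --- the left side via Lemma \ref{lem:s0periodization} as $P_{\Lambda^\circ}(f)$, the right side via Lemma \ref{lem:s0sampling} applied to $\F_\sigma(f)\in S_0(\Rdd)$ --- and matching symplectic Fourier coefficients, with the coefficient computation for $P_{\Lambda^\circ}(f)$ reducing to the tiling of $\Rdd$ by translates of a fundamental domain and the normalization $|\Lambda^\circ|=1/|\Lambda|$. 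The bookkeeping you flag (normalized Haar measure versus Lebesgue measure, the symplectic identification of $\widehat{\Rdd}$ with $\Rdd$, and the nontriviality of the quotient characters via $(\Lambda^\circ)^\circ=\Lambda$) is exactly where the content lies, and you handle it correctly; in effect you have unpacked the proof of the cited theorem in this special case. What the paper's approach buys is brevity and a clean separation between the abstract LCA-group statement and the $S_0$-specific continuity argument; what yours buys is self-containedness and an explicit display of where each hypothesis ($f\in S_0$, hence $\F_\sigma(f)\vert_\Lambda\in\ell^1(\Lambda)$ and $P_{\Lambda^\circ}(f)\in A(\Rdd/\Lambda^\circ)$) enters. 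Both arguments rely on the same $S_0$-machinery to get equality everywhere rather than almost everywhere.
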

\begin{proof}
	This is \cite[Thm. 3.6.3]{Deitmar:2014} with $A=\Rdd$, $B=\Lambda^\circ$ and using $(\Lambda^\circ)^\circ=\Lambda.$ To get equality for any $z\in \Rdd$, we use that $\sum_{\lambda^\circ \in \Lambda^\circ} f(z+\lambda^\circ)$ defines a continuous function on $\Rdd/\Lambda^\circ$ by Lemma \ref{lem:s0periodization}. 
\end{proof}

Since $\F_\sigma^\Lambda$ is a Fourier transform it extends to a unitary mapping $\F_\sigma^\Lambda:\ell^2(\Lambda)\to L^2(\Rdd/\Lambda^\circ)$ satisfying
\begin{equation} \label{eq:fourierseriesofconvolution}
  \F_\sigma^\Lambda(c\ast_\Lambda d)=\F_\sigma^\Lambda(c)\F_\sigma^\Lambda(d)
\end{equation}
for $c\in \ell^1(\Lambda)$ and $d\in \ell^2(\Lambda)$.
\subsection{The Fourier transform of $S\star_\Lambda T$}
We now consider a version of \eqref{eq:FTofconvolutionscontinuous} for sequences. The formula for $\F_\sigma^\Lambda(S\star_\Lambda T)$ is a simple consequence of the Poisson summation formula.

\begin{thm} \label{thm:orthogonality} 
	Let $S\in \beauty$ and $T\in \tco$. Then 
	\begin{align*}
		\F_{\sigma}^{\Lambda}(S\star_\Lambda T)(\dot{z})&=\frac{1}{|\Lambda|} \sum_{\lambda^\circ\in \Lambda^\circ}F_W(S)(z+\lambda^\circ)\F_W(T)(z+\lambda^\circ)\\
		&=P_{\Lambda^\circ}(\F_W(S)\F_W(T))(\dot{z})
	\end{align*}
	for any $z\in \Rdd.$
\end{thm}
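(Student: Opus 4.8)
The plan is to recognise $S\star_\Lambda T$ as the restriction to $\Lambda$ of the continuous operator convolution $S\star T$, which lies in $S_0(\Rdd)$, and then to combine Werner's continuous identity $\F_\sigma(S\star T)=\F_W(S)\F_W(T)$ from \eqref{eq:FTofconvolutionscontinuous} with the Poisson summation formula of Theorem \ref{thm:poisson}. Everything reduces to a clean bookkeeping of the two symplectic Fourier transforms $\F_\sigma$ and $\F_\sigma^\Lambda$.

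First I would recall that $S\star T\in S_0(\Rdd)$ (this is exactly what is used in the proof of Proposition \ref{prop:convwelldefined}, via \cite[Thm. 8.1]{Luef:2018c}), and that $S\star_\Lambda T(\lambda)=S\star T(\lambda)$ by \eqref{eq:convopop}. By the definition of the symplectic Fourier series this gives
\[
\F_\sigma^\Lambda(S\star_\Lambda T)(\dot z)=\sum_{\lambda\in\Lambda}(S\star T)(\lambda)\,e^{2\pi i\sigma(\lambda,z)}.
\]
Next, since $\F_\sigma$ is a Banach space isomorphism of $S_0(\Rdd)$ and is its own inverse, the function $g:=\F_\sigma(S\star T)=\F_W(S)\F_W(T)$ (using \eqref{eq:FTofconvolutionscontinuous}, which applies because $S\in\beauty\subset\tco$) belongs to $S_0(\Rdd)$ and satisfies $\F_\sigma(g)=S\star T$. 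Applying Theorem \ref{thm:poisson} to $g$ yields
\[
\frac{1}{|\Lambda|}\sum_{\lambda^\circ\in\Lambda^\circ}g(z+\lambda^\circ)=\sum_{\lambda\in\Lambda}\F_\sigma(g)(\lambda)\,e^{2\pi i\sigma(\lambda,z)}=\sum_{\lambda\in\Lambda}(S\star T)(\lambda)\,e^{2\pi i\sigma(\lambda,z)}.
\]
Comparing the two displays gives the first claimed equality, and the second is merely the definition of $P_{\Lambda^\circ}$ recorded in the remark following Lemma \ref{lem:s0periodization}.

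I do not expect a serious obstacle here; the only points requiring care are that all sums converge absolutely and that the identities hold for \emph{every} $z\in\Rdd$ rather than almost everywhere. Both are handled by the membership $g\in S_0(\Rdd)$: such a function is continuous, its periodisation $P_{\Lambda^\circ}(g)$ is a well-defined continuous function on $\Rdd/\Lambda^\circ$ by Lemma \ref{lem:s0periodization}, its samples over $\Lambda$ form an $\ell^1$ sequence by Lemma \ref{lem:s0sampling}, and Theorem \ref{thm:poisson} is already phrased in the everywhere-pointwise form. One should simply remark at the outset that $S\in\beauty$ (rather than just $S\in\tco$) is what guarantees $S\star T\in S_0(\Rdd)$, so that the whole argument is legitimate.
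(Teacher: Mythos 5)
Your proposal is correct and is essentially identical to the paper's own proof: both identify $S\star_\Lambda T$ as the restriction of $S\star T\in S_0(\Rdd)$ to $\Lambda$, apply the Poisson summation formula of Theorem \ref{thm:poisson} to $\F_W(S)\F_W(T)=\F_\sigma(S\star T)$, and use that $\F_\sigma$ is its own inverse together with the $S_0$ membership to get the identity pointwise for every $z\in\Rdd$. No gaps.
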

\begin{proof}
From  \cite[Thm. 8.2]{Luef:2018c}, we know that $S\star T\in S_0(\Rdd)$. Hence $\F_\sigma(S\star T)=\F_W(S)\F_W(T)\in S_0(\Rdd)$ since $\F_\sigma:S_0(\Rdd)\to S_0(\Rdd)$ is an isomorphism. By applying Poisson's summation formula from Theorem \ref{thm:poisson} to $f=\F_W(S)\F_W(T)$, we find that {\footnotesize
\begin{align*}
  \frac{1}{|\Lambda|}  \sum_{\lambda^\circ \in \Lambda^\circ} \F_W(S)(z+\lambda^\circ)\F_W(T)(z+\lambda^\circ) &= \sum_{\lambda \in \Lambda} \F_\sigma (\F_W(S)\F_W(T))(\lambda)e^{2\pi i \sigma(\lambda,z)}\\
  &=  \sum_{\lambda \in \Lambda} S\star_\Lambda T(\lambda)e^{2\pi i \sigma(\lambda,z)},
\end{align*} }
where we used that $\F_\sigma$ is its own inverse to conclude that $$\F_\sigma(\F_W(S)\F_W(T))(\lambda)=\F_\sigma(\F_\sigma(S\star T))(\lambda)=S\star T(\lambda)=S\star_\Lambda T(\lambda).$$  Since $\F_W(S)\F_W(T)\in S_0(\Rdd)$, Theorem \ref{thm:poisson} says that the equation holds for any $z\in \Rdd$.
\end{proof}
\begin{rem}
	Theorem \ref{thm:orthogonality} has also been proved and used in \cite[Cor. A.3]{Lesch:2016} in noncommutative geometry, with stronger assumptions on $S,T$.
\end{rem}
Theorem \ref{thm:orthogonality} has many interesting special cases. We will frequently refer to the following version, which follows since a short calculation using the definition of the Fourier-Wigner transform shows that 
\begin{equation} \label{eq:fwcheckadjoint}
  \F_W(\check{S^*})(z)=\overline{\F_W(S)(z)}.
\end{equation}

\begin{cor} \label{cor:orthogonalabsolute}
	Let $S\in \beauty.$ Then
	\begin{equation*}
  \F_\sigma^\Lambda (S\star_\Lambda \check{S^*})(\dot{z})=\frac{1}{|\Lambda|}\sum_{\lambda^\circ \in \Lambda^\circ} |\F_W(S)(z+\lambda^\circ)|^2 \quad \text{ for any } z\in \Rdd.
\end{equation*}
\end{cor}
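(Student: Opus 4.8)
The plan is to read the corollary off Theorem \ref{thm:orthogonality} by specializing $T=\check{S^*}$. First I would check that this choice is admissible. Since $S\in\beauty\subset\tco$ and, by Lemma \ref{lem:translation}(iii), both $S\mapsto S^*$ and $S\mapsto\check S$ are isometries on $\tco$, the operator $\check{S^*}$ again belongs to $\tco$; hence the hypotheses of Theorem \ref{thm:orthogonality} are met with $S\in\beauty$ and $T=\check{S^*}\in\tco$. In particular $S\star_\Lambda\check{S^*}\in\ell^1(\Lambda)$ by Proposition \ref{prop:convwelldefined}, so the left-hand side is a well-defined symplectic Fourier series on $\Rdd/\Lambda^\circ$.

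Applying Theorem \ref{thm:orthogonality} then yields, for every $z\in\Rdd$,
\begin{equation*}
  \F_\sigma^\Lambda(S\star_\Lambda\check{S^*})(\dot z)=\frac{1}{|\Lambda|}\sum_{\lambda^\circ\in\Lambda^\circ}\F_W(S)(z+\lambda^\circ)\,\F_W(\check{S^*})(z+\lambda^\circ).
\end{equation*}
The last step is to substitute the conjugation identity \eqref{eq:fwcheckadjoint}, $\F_W(\check{S^*})(w)=\overline{\F_W(S)(w)}$, at each point $w=z+\lambda^\circ$. Every summand then becomes $\F_W(S)(z+\lambda^\circ)\,\overline{\F_W(S)(z+\lambda^\circ)}=|\F_W(S)(z+\lambda^\circ)|^2$, which is exactly the claimed formula; the pointwise validity for all $z\in\Rdd$ is inherited from Theorem \ref{thm:orthogonality}, whose conclusion was stated pointwise precisely because $\F_W(S)\F_W(\check{S^*})\in S_0(\Rdd)$.

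I do not expect any genuine obstacle here: all the substantive work is already carried out in Theorem \ref{thm:orthogonality} and in the identity \eqref{eq:fwcheckadjoint}, and the only thing that must be verified is the membership $\check{S^*}\in\tco$, which is immediate from Lemma \ref{lem:translation}(iii). If one wanted a self-contained treatment one would additionally prove \eqref{eq:fwcheckadjoint}, but this is the ``short calculation'' already flagged in the text, following from the definition of $\F_W$ together with the relation $P\pi(z)P=\pi(-z)$ and the behaviour of $\tr$ under adjoints.
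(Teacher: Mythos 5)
Your proposal is correct and follows exactly the route the paper takes: specialize Theorem \ref{thm:orthogonality} to $T=\check{S}^*$ (admissible since $\check{\ }$ and $^*$ preserve $\tco$ by Lemma \ref{lem:translation}) and substitute the identity \eqref{eq:fwcheckadjoint} to turn each summand into $|\F_W(S)(z+\lambda^\circ)|^2$. No gaps; nothing further is needed.
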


\begin{cor} \label{cor:sumofconvolution}
Let $S\in \beauty$ and $T\in \tco$. Then
\begin{equation*}
	\sum_{\lambda\in \Lambda} S\star_\Lambda T(\lambda)=\frac{1}{|\Lambda|} \sum_{\lambda^\circ \in \Lambda^\circ} F_W(S)(\lambda^\circ)F_W(T)(\lambda^\circ).
\end{equation*}
\end{cor}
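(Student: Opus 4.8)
The plan is to obtain this as an immediate specialization of Theorem \ref{thm:orthogonality}, evaluated at the distinguished point $\dot 0\in\Rdd/\Lambda^\circ$, i.e. the image of $z=0$ under the quotient map $\Rdd\to\Rdd/\Lambda^\circ$.

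First I would record that, since $S\in\beauty$ and $T\in\tco$, Proposition \ref{prop:convwelldefined} guarantees $S\star_\Lambda T\in\ell^1(\Lambda)$, so the left-hand side $\sum_{\lambda\in\Lambda}S\star_\Lambda T(\lambda)$ is an absolutely convergent series. Next I would observe that evaluating the symplectic Fourier series at $z=0$ kills all the characters: $e^{2\pi i\sigma(\lambda,0)}=1$ for every $\lambda\in\Lambda$, hence
\begin{equation*}
  \F_\sigma^\Lambda(S\star_\Lambda T)(\dot 0)=\sum_{\lambda\in\Lambda}S\star_\Lambda T(\lambda).
\end{equation*}
Then I would simply plug $z=0$ into the identity of Theorem \ref{thm:orthogonality}, whose right-hand side becomes $\frac{1}{|\Lambda|}\sum_{\lambda^\circ\in\Lambda^\circ}\F_W(S)(\lambda^\circ)\F_W(T)(\lambda^\circ)$, and this series converges because $\F_W(S)\F_W(T)=\F_\sigma(S\star T)\in S_0(\Rdd)$ (as already used in the proof of Theorem \ref{thm:orthogonality}) so that Lemma \ref{lem:s0sampling} applies to its restriction to the lattice $\Lambda^\circ$. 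Combining the two displays gives the claimed formula.

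There is essentially no obstacle here: the only point worth a sentence is making explicit that both sides are genuinely convergent (left side by Proposition \ref{prop:convwelldefined}, right side by $S_0$-sampling), so that the pointwise evaluation at $\dot 0$ is legitimate; everything else is a direct substitution into the already-proved Theorem \ref{thm:orthogonality}.
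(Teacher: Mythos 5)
Your proof is correct and is exactly the paper's argument: the corollary is obtained by setting $z=0$ in Theorem \ref{thm:orthogonality}, so that the character values $e^{2\pi i\sigma(\lambda,0)}$ all equal $1$ and the symplectic Fourier series collapses to the plain sum. The extra remarks on absolute convergence of both sides are harmless and consistent with what the paper already established.
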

\begin{proof}
	This follows from Theorem \ref{thm:orthogonality} with $z=0$.
\end{proof}

Now assume that $S$ and $T$ are rank-one operators: $S=\xi_1 \otimes \xi_2$ for $\xi_1, \xi_2 \in S_0(\Rd)$ and $T=\check{\varphi_1}\otimes \check{\varphi_2}$ for $\varphi_1, \varphi_2 \in L^2(\Rd)$. By \eqref{eq:tworankone0} $$S\star_\Lambda T(\lambda)=V_{\varphi_2} \xi_1(\lambda)\overline{V_{\varphi_1}\xi_2(\lambda)},$$
and noting that $T=\check{T_0}^*$ for $T_0=\varphi_2 \otimes \varphi_1$, we can use \eqref{eq:fwrankone} and \eqref{eq:fwcheckadjoint} to find 
\begin{align*}
  \F_W(S)(z)&=e^{\pi i x\cdot\omega}V_{\xi_2} \xi_1(z) \\
  \F_W(T)(z)&=e^{-\pi i x\cdot\omega}\overline{V_{\varphi_1}\varphi_2(z)}
\end{align*}
 Hence Theorem \ref{thm:orthogonality} says that 
\begin{equation*}
  \F_{\sigma}^{\Lambda}(V_{\varphi_2} \xi_1\overline{V_{\varphi_1}\xi_2}\vert_\Lambda )(\dot{z})=\frac{1}{|\Lambda|} \sum_{\lambda^\circ\in \Lambda^\circ}V_{\xi_2} \xi_1(z+\lambda^\circ)\overline{V_{\varphi_1}\varphi_2(z+\lambda^\circ)}.
\end{equation*}
Furthermore, Corollary \ref{cor:sumofconvolution} gives
\begin{equation*}
  \sum_{\lambda\in \Lambda}V_{\varphi_2} \xi_1(\lambda)\overline{V_{\varphi_1}\xi_2(\lambda)}=\frac{1}{|\Lambda|} \sum_{\lambda^\circ\in \Lambda^\circ}V_{\xi_2} \xi_1(\lambda^\circ)\overline{V_{\varphi_1}\varphi_2(\lambda^\circ)},
\end{equation*}
which is the \textit{fundamental identity of Gabor analysis} \cite{Feichtinger:2006,Tolimieri:1994,Janssen:1995,Rieffel:1988}.

\subsection{The Fourier transform of $c\star_\Lambda S$}
When $c\in \ell^1(\Lambda)$, we obtain the expected formula for $\F_W(c\star_\Lambda S)$.

\begin{prop} \label{prop:spreadinggm} 
If $c\in \ell^1(\Lambda)$ and $S\in \tco$, then
\begin{equation*}
  \F_W(c\star_\Lambda S)(z)=\F_\sigma^\Lambda (c)(\dot{z})\F_W(S)(z) \quad \text{ for } z\in \Rdd.
\end{equation*}
\end{prop}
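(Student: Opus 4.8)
The plan is to reduce the statement to the covariance of the Fourier--Wigner transform under the translations $\alpha_\lambda$ and then sum over the lattice.

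First I would note that since each $\alpha_\lambda$ is an isometry of $\tco$ and $c\in\ell^1(\Lambda)$, the series $\sum_{\lambda\in\Lambda}c(\lambda)\alpha_\lambda(S)$ converges absolutely in the trace-class norm to $c\star_\Lambda S$. For a fixed $z=(x,\omega)\in\Rdd$, the map $R\mapsto\F_W(R)(z)=e^{-\pi i x\cdot\omega}\tr(\pi(-z)R)$ is a bounded linear functional on $\tco$, with $|\F_W(R)(z)|\leq\|R\|_{\tco}$, so it commutes with the absolutely convergent sum:
\begin{equation*}
  \F_W(c\star_\Lambda S)(z)=\sum_{\lambda\in\Lambda}c(\lambda)\,\F_W(\alpha_\lambda(S))(z).
\end{equation*}

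The key step is the covariance identity $\F_W(\alpha_\lambda(S))(z)=e^{2\pi i\sigma(\lambda,z)}\F_W(S)(z)$. Using the relation $\F_W(R)=\F_\sigma(\weyl_R)$ recorded earlier together with the fact that the Weyl symbol of $\alpha_\lambda(S)$ is $T_\lambda(\weyl_S)$ (Lemma \ref{lem:translation}(i)), this reduces to the elementary computation $\F_\sigma(T_\lambda f)(z)=e^{2\pi i\sigma(\lambda,z)}\F_\sigma(f)(z)$, obtained by the change of variables $u=z'-\lambda$ in the defining integral of $\F_\sigma$ and the antisymmetry of $\sigma$; alternatively one can compute $\tr(\pi(-z)\pi(\lambda)S\pi(\lambda)^*)$ directly from the commutation relations of time-frequency shifts.

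Finally, inserting this identity and recognising the symplectic Fourier series gives
\begin{equation*}
  \F_W(c\star_\Lambda S)(z)=\left(\sum_{\lambda\in\Lambda}c(\lambda)e^{2\pi i\sigma(\lambda,z)}\right)\F_W(S)(z)=\F_\sigma^\Lambda(c)(\dot z)\,\F_W(S)(z),
\end{equation*}
where the parenthesised series on $\Rdd$ is the pullback along the quotient $\Rdd\to\Rdd/\Lambda^\circ$ of $\F_\sigma^\Lambda(c)$, converging absolutely since $c\in\ell^1(\Lambda)$. I do not expect a genuine obstacle: the only points needing care are the justification of interchanging $\F_W$ with the infinite sum and keeping track of the sign convention in the covariance relation.
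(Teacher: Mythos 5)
Your proposal is correct and follows essentially the same route as the paper: interchange $\F_W$ with the absolutely convergent sum (the paper cites continuity of $\F_W:\tco\to L^\infty(\Rdd)$, which is the same bound $|\F_W(R)(z)|\leq\|R\|_{\tco}$ you use), apply the covariance identity $\F_W(\alpha_\lambda(S))(z)=e^{2\pi i\sigma(\lambda,z)}\F_W(S)(z)$, and recognise the symplectic Fourier series.
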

\begin{proof}
	One easily verifies the formula $$\F_W(\alpha_\lambda (S))(z)=e^{2\pi i \sigma(\lambda,z)}\F_W(S)(z),$$ showing that the Fourier transform of a translation is a modulation. Hence
\begin{align*}
	\F_W(c\star_\Lambda S)(z)&= \sum_{\lambda\in \Lambda} c(\lambda) \F_W(\alpha_\lambda (S)) \\
	&= \sum_{\lambda\in \Lambda} c(\lambda) e^{2\pi i \sigma(\lambda,z)}\F_W(S)(z) \\ 
	&= \F_W(S)(z) \sum_{\lambda\in \Lambda} c(\lambda) e^{2\pi i \sigma(\lambda,z)}.
\end{align*}	
To move $\F_W$ inside the sum, we use that the sum $\sum_{\lambda \in \Lambda} c(\lambda)\alpha_\lambda(S)$ converges absolutely in $\tco$, and $\F_W$ is continuous from $\tco$ to $L^\infty(\Rdd)$ by the Riemann-Lebesgue lemma for $\F_W$ \cite[Prop. 6.6]{Luef:2018c}. 
\end{proof}

\subsubsection{Technical intermezzo} Let $A'(\Rdd/\Lambda^\circ)$ denote the dual space of $A(\Rdd/\Lambda^\circ)$, consisting of distributions on $\Rdd/\Lambda^\circ$ with symplectic Fourier coefficients in $\ell^\infty(\Lambda).$ To understand the statement in Proposition \ref{prop:spreadinggm} when $c\in \ell^\infty(\Lambda)$, we need to `extend' distributions in $A'(\Rdd/\Lambda^\circ)$ to distributions in $S_0'(\Rdd)$. When $f\in A(\Rdd/\Lambda^\circ)$ this is  achieved by
 \begin{equation*}
  A(\Rdd/\Lambda^\circ)\ni f\mapsto f\circ q \in S_0'(\Rdd),
\end{equation*}
 where $q:\Rdd\to \Rdd/\Lambda^\circ$ is the natural quotient map. To extend this map to distributions $f\in A'(\Rdd/\Lambda^\circ)$, one can use Weil's formula \cite[(6.2.11)]{Grochenig:1998} to show that for $f\in A(\Rdd/\Lambda^\circ)$ and $g\in S_0(\Rdd)$ one has 
\begin{equation*}
  \inner{f\circ q}{g}_{S_0',S_0}=\inner{f}{P_{\Lambda^\circ}g}_{A'(\Rdd/\Lambda^\circ),A(\Rdd,\Lambda^\circ)}.
\end{equation*}
This shows that the map $f\mapsto f\circ q$ agrees with the Banach space adjoint $P_{\Lambda^\circ}^*:A'(\Rdd/\Lambda^\circ)\to S_0'(\Rdd)$ for $f\in A(\Rdd/\Lambda^\circ)$. The natural way to extend $f\in A'(\Rdd/\Lambda^\circ)$ is therefore to consider $P_{\Lambda^\circ}^*f\in S_0'(\Rdd)$, and by an abuse of notation we will use $f$ to also denote the extension $P_{\Lambda^\circ}^*f$ -- by definition this means that when $f\in A'(\Rdd/\Lambda^\circ)$ is considered an element of $S_0'(\Rdd)$, it satisfies for $g\in S_0(\Rdd)$
\begin{equation} \label{eq:periodicextension}
  \inner{f}{g}_{S_0',S_0}=\inner{f}{P_{\Lambda^\circ}g}_{A'(\Rdd/\Lambda^\circ),A(\Rdd,\Lambda^\circ)}.
\end{equation}
We also remind the reader that for $c\in \ell^\infty(\Lambda)$ one defines $\F_\sigma^\Lambda(c)$ as an element of $A'(\Rdd/\Lambda^\circ)$ by 
\begin{equation} \label{eq:fsigmadual}
  \inner{\F_\sigma^\Lambda(c)}{g}_{A'(\Rdd/\Lambda^\circ),A(\Rdd/\Lambda^\circ)}:=\inner{c}{(\F_\sigma^\Lambda)^{-1}(g)}_{\ell^\infty(\Lambda),\ell^1(\Lambda)},
\end{equation}
where $(\F_\sigma^\Lambda)^{-1}(g)$ are the symplectic Fourier coefficients of $g$. This is \cite[Example 6.8]{Jakobsen:2018} for the group $G=\Rdd/\Lambda^\circ$. Finally, recall that we can multiply $f\in S_0'(\Rdd)$ with $g\in S_0(\Rdd)$ to obtain an element $fg\in S_0'(\Rdd)$ given by 
\begin{equation} \label{eq:productdual}
  \inner{fg}{h}_{S_0',S_0}:= \inner{f}{\overline{g}h}_{S_0',S_0} \quad \text{ for } h\in S_0(\Rdd).
\end{equation}

\subsubsection{The case $c\in \ell^\infty(\Lambda)$}
The technical intermezzo allows us to make sense of the following generalization of Proposition \ref{prop:spreadinggm}. Recall in particular that $\F_\sigma^\Lambda (c)$ is shorthand for the distribution $P_{\Lambda^\circ}^* (\F_\sigma^\Lambda (c))\in S_0'(\Rdd)$.
\begin{prop} \label{prop:spreadinggm2}
	If $c\in \ell^\infty(\Lambda)$ and $S\in \beauty$, then
\begin{equation*}
  \F_W(c\star_\Lambda S)=\F_\sigma^\Lambda (c)\F_W(S) \quad \text{ in } S_0'(\Rdd).
\end{equation*}
\end{prop}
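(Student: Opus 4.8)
The plan is to establish the identity directly in $S_0'(\Rdd)$ by testing both sides against an arbitrary $g\in S_0(\Rdd)$. First I would note that both sides are well-defined elements of $S_0'(\Rdd)$: the left-hand side because $c\star_\Lambda S\in\bo$ by \eqref{eq:dualconvolutions}, hence lies in $\beast$ via the inclusion $\iota^*\colon\bo\to\beast$, so $\F_W(c\star_\Lambda S)$ is defined through \eqref{eq:fwdual}; and the right-hand side because $\F_W(S)\in S_0(\Rdd)$ (as $S\in\beauty$) while $\F_\sigma^\Lambda(c)\in A'(\Rdd/\Lambda^\circ)$ is promoted to $S_0'(\Rdd)$ via $P_{\Lambda^\circ}^*$ as in \eqref{eq:periodicextension}, so the product $\F_\sigma^\Lambda(c)\F_W(S)$ makes sense through \eqref{eq:productdual}.

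For the left-hand side I would write $\rho=\F_W^{-1}\colon S_0(\Rdd)\to\beauty$ and chain \eqref{eq:fwdual}, the inclusion $\bo\subset\beast$, and definition \eqref{eq:dualconvolutions} with $R=\rho(g)\in\beauty\subset\tco$ to get
\[
  \inner{\F_W(c\star_\Lambda S)}{g}_{S_0',S_0}=\inner{c\star_\Lambda S}{\rho(g)}_{\bo,\tco}=\inner{c}{\rho(g)\star_\Lambda\check{S}^*}_{\ell^\infty,\ell^1},
\]
where $\rho(g)\star_\Lambda\check{S}^*\in\ell^1(\Lambda)$ by Proposition \ref{prop:convwelldefined}. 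The key point is to identify this sequence: by \eqref{eq:convopop} it is the restriction to $\Lambda$ of the \emph{function} $\rho(g)\star\check{S}^*$, and Werner's continuous identity \eqref{eq:FTofconvolutionscontinuous} together with $\F_\sigma\circ\F_\sigma=\mathrm{id}$, the relation $\F_W(\rho(g))=g$ and \eqref{eq:fwcheckadjoint} yields $\rho(g)\star\check{S}^*=\F_\sigma\big(g\cdot\overline{\F_W(S)}\big)$. Here $\overline{\F_W(S)}=\F_W(\check{S}^*)\in S_0(\Rdd)$ and $S_0(\Rdd)$ is an algebra, so $g\cdot\overline{\F_W(S)}\in S_0(\Rdd)$, and $\F_\sigma$ preserves $S_0(\Rdd)$. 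Thus the left-hand side equals $\inner{c}{\F_\sigma(\overline{\F_W(S)}\,g)\vert_\Lambda}_{\ell^\infty,\ell^1}$.

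For the right-hand side I would apply \eqref{eq:productdual}, \eqref{eq:periodicextension} and \eqref{eq:fsigmadual} in succession, reducing $\inner{\F_\sigma^\Lambda(c)\F_W(S)}{g}_{S_0',S_0}$ to $\inner{c}{(\F_\sigma^\Lambda)^{-1}\big(P_{\Lambda^\circ}(\overline{\F_W(S)}\,g)\big)}_{\ell^\infty,\ell^1}$. The two expressions coincide once one observes that $(\F_\sigma^\Lambda)^{-1}(P_{\Lambda^\circ}(f))=\F_\sigma(f)\vert_\Lambda$ for every $f\in S_0(\Rdd)$; this is precisely the Poisson summation formula of Theorem \ref{thm:poisson}, which gives $P_{\Lambda^\circ}(f)(\dot z)=\sum_{\lambda\in\Lambda}\F_\sigma(f)(\lambda)e^{2\pi i\sigma(\lambda,z)}=\F_\sigma^\Lambda\big(\F_\sigma(f)\vert_\Lambda\big)(\dot z)$. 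Taking $f=\overline{\F_W(S)}\,g$ then matches the two sides and completes the proof.

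I expect the main obstacle to be purely bookkeeping rather than conceptual: one must keep the several duality brackets ($\langle\cdot,\cdot\rangle_{S_0',S_0}$, $\langle\cdot,\cdot\rangle_{\beast,\beauty}$, $\langle\cdot,\cdot\rangle_{\bo,\tco}$, $\langle\cdot,\cdot\rangle_{\ell^\infty,\ell^1}$, $\langle\cdot,\cdot\rangle_{A',A}$) consistent, track which pairings are antilinear in the second slot, and invoke the notational identifications (the extension of an $A'(\Rdd/\Lambda^\circ)$-distribution to $S_0'(\Rdd)$, and of $\F_\sigma^\Lambda(c)$ to the periodized distribution) in the correct direction. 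No new analytic input beyond Poisson summation and the already-established continuous identity \eqref{eq:FTofconvolutionscontinuous} is needed; it is worth remarking that since everything is tested against a fixed $g\in S_0(\Rdd)$, all the series that appear are absolutely convergent, so no extra limiting or density argument is required.
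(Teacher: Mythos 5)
Your proof is correct and follows essentially the same route as the paper: test against $g\in S_0(\Rdd)$, unwind the left side via \eqref{eq:fwdual} and \eqref{eq:dualconvolutions} to $\inner{c}{\rho(g)\star_\Lambda \check{S}^*}_{\ell^\infty,\ell^1}$, unwind the right side via \eqref{eq:productdual}, \eqref{eq:periodicextension} and \eqref{eq:fsigmadual}, and match the two with Poisson summation. The only cosmetic difference is that the paper invokes Theorem \ref{thm:orthogonality} as a packaged identity $\F_\sigma^\Lambda(\rho(g)\star_\Lambda\check{S}^*)=P_{\Lambda^\circ}(\overline{\F_W(S)}\,g)$, whereas you re-derive the same content from Werner's continuous identity \eqref{eq:FTofconvolutionscontinuous} together with Theorem \ref{thm:poisson}.
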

\begin{proof}
	For $h\in S_0(\Rdd)$, we get from \eqref{eq:fwdual}, \eqref{eq:dualconvolutions} and \eqref{eq:fsigmadual} (in that order)
	\begin{align*}
  \inner{\F_W(c\star_\Lambda S)}{h}_{S_0',S_0}&=\inner{c\star_\Lambda S}{\rho(h)}_{\beast,\beauty}  \\
  &= \inner{c}{\rho(h) \star_\Lambda \check{S}^*}_{\ell^\infty(\Lambda),\ell^1(\Lambda)} \\
  &=\inner{\F_\sigma^\Lambda(c)}{\F_\sigma^\Lambda(\rho(h) \star_\Lambda \check{S}^* )}_{A'(\Rdd/\Lambda^\circ),A(\Rdd/\Lambda^\circ)}.
\end{align*}
 By Theorem \ref{thm:orthogonality} we find using \eqref{eq:fwcheckadjoint} that 
\begin{equation*}
  \F_\sigma^\Lambda(\rho(h) \star_\Lambda \check{S}^*)= P_{\Lambda^\circ} (\overline{\F_W(S)} h),
\end{equation*}
where we also used that  $\rho$ is the inverse of $\F_W$.
On the other hand we find using \eqref{eq:productdual} and \eqref{eq:periodicextension} that 

\begin{align*}
  \inner{\F_\sigma^\Lambda (c)\F_W(S)}{h}_{S_0',S_0}&= \inner{\F_\sigma^\Lambda(c)}{\overline{\F_W(S)}h}_{S_0',S_0} \\
  &= \inner{\F_\sigma^\Lambda(c)}{P_{\Lambda^\circ} (\overline{\F_W(S)}h)}_{A'(\Rdd/\Lambda^\circ),A(\Rdd/\Lambda^\circ)} 
\end{align*}
Hence $\inner{\F_\sigma^\Lambda (c)\F_W(S)}{h}_{S_0',S_0}=\inner{\F_W(c\star_\Lambda S)}{h}_{S_0',S_0}$, which implies the statement. 
\end{proof}

\begin{rem}
	For Gabor multipliers $c\star_\Lambda (\psi\otimes \psi)$, Propositions \ref{prop:spreadinggm} and \ref{prop:spreadinggm2} were proved in \cite[Lem. 14]{Dorfler:2010}, and have been used in the theory of convolutional neural networks \cite{Dorfler:2018}.
\end{rem}

\section{Riesz sequences of translated operators in $\HS$} \label{sec:riesz}
Two of the useful properties of the Weyl transform $f\mapsto L_f$ are that it is a unitary transformation from $L^2(\Rdd)$ to the Hilbert-Schmidt operators $\HS$, and that it respects translations in the sense that  
\begin{equation*}
  L_{T_z f}=\alpha_z(L_f) \quad  \text{ for } f\in L^2(\Rdd), z\in \Rdd.
\end{equation*}
As a consequence, statements concerning translates of functions in $L^2(\Rdd)$ can be lifted to statements about translates of operators and convolutions $\star_\Lambda$ in $\HS$. This approach was first used for Gabor multipliers in \cite{Feichtinger:2002,Feichtinger:2003}, and has later been explored in other works \cite{Benedetto:2006,Dorfler:2010} -- we include these results for completeness, and because the proofs and results find natural formulations and generalizations in the framework of this paper. 

For fixed $S\in \HS$ and lattice $\Lambda$, we will be interested in whether $\{\alpha_\lambda(S)\}_{\lambda \in \Lambda}$ is a \textit{Riesz sequence in $\HS$}, i.e. whether there exist $A,B>0$ such that for all finite sequences $c\in \ell^2(\Lambda)$ 
 \begin{equation} \label{eq:rieszoperator}
  A \|c\|^2_{\ell^2(\Lambda)}\leq \left\|\sum_{\lambda \in \Lambda} c(\lambda) \alpha_\lambda(S) \right\|_\HS^2 \leq B \|c\|_{\ell^2(\Lambda)}^2.
\end{equation}

Since the Weyl transform is unitary and preserves translations, if we let $\weyl_S$ be the Weyl symbol of $S$, then \eqref{eq:rieszoperator} is clearly equivalent to the fact that $\{T_\lambda(\weyl_S)\}_{\lambda \in \Lambda}$ is  a Riesz sequence in $L^2(\Rdd)$, meaning that
 \begin{equation*}
  A \|c\|_{\ell^2(\Lambda)}^2\leq \left\|\sum_{\lambda \in \Lambda} c(\lambda) T_\lambda(\weyl_S) \right\|^2_{L^2(\Rdd)} \leq B \|c\|_{\ell^2(\Lambda)}^2,
  \end{equation*}
for finite $c\in \ell^2(\Lambda)$. Following \cite{Feichtinger:2002,Feichtinger:2003,Benedetto:2006,Dorfler:2010} we can use a result from \cite{Benedetto:1998} to give a characterization of when \eqref{eq:rieszoperator} holds in terms of an expression familiar from Corollary \ref{cor:orthogonalabsolute}.
\begin{thm} \label{thm:rieszsequence}
	Let $\Lambda$ be a lattice and $S\in \beauty$. Then the following are equivalent.
	\begin{enumerate}[(i)]
		\item The function $$\F_\sigma^\Lambda (S\star_\Lambda \check{S}^*) =P_{\Lambda^\circ}(|\F_W(S)|^2)$$ has no zeros in $\Rdd/\Lambda^\circ$.
		\item $\{\alpha_\lambda (S)\}_{\lambda \in \Lambda}$ is a Riesz sequence in $\HS$.
	\end{enumerate}
\end{thm}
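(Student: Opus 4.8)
The plan is to transport the problem to $L^2(\Rdd)$ via the Weyl transform and then invoke the classical description of when the lattice translates of a single function form a Riesz sequence. As noted just before the statement, since $f\mapsto L_f$ is a unitary map $L^2(\Rdd)\to\HS$ intertwining $T_\lambda$ with $\alpha_\lambda$, condition (ii) holds if and only if $\{T_\lambda(\weyl_S)\}_{\lambda\in\Lambda}$ is a Riesz sequence in $L^2(\Rdd)$, where $\weyl_S$ is the Weyl symbol of $S$. Here $\weyl_S\in S_0(\Rdd)$ because $S\in\beauty$, and $\F_\sigma(\weyl_S)=\F_W(S)\in S_0(\Rdd)$.

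First I would recall the characterization of Riesz sequences of translates from \cite{Benedetto:1998}: for $g\in L^2(\Rdd)$ and a lattice $\Lambda$, the family $\{T_\lambda g\}_{\lambda\in\Lambda}$ is a Riesz sequence if and only if the periodization $\Phi_g$ of $|\F_\sigma(g)|^2$ over the dual lattice, viewed as a function on the compact group $\Rdd/\Lambda^\circ$, is bounded above and below by strictly positive constants; moreover these essential bounds may be taken as the Riesz bounds $B$ and $A$ in \eqref{eq:rieszoperator}. This is usually stated for $\Lambda=\Z^{2d}$ and the ordinary Fourier transform, but a linear change of variables $A\in GL(2d,\R)$ with $\Lambda=A\Z^{2d}$, together with the fact that $\F_\sigma$ differs from $\F$ only by precomposition with a (measure-preserving, invertible) symplectic matrix, reduces the general case to the standard one; under this reduction the dual lattice in the statement is precisely the adjoint lattice $\Lambda^\circ$ introduced in Section \ref{sec:fouriertransforms}. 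By Corollary \ref{cor:orthogonalabsolute} and $\F_W(S)=\F_\sigma(\weyl_S)$, the periodization $\Phi_{\weyl_S}$ is exactly $\F_\sigma^\Lambda(S\star_\Lambda\check{S}^*)=P_{\Lambda^\circ}(|\F_W(S)|^2)$, the function appearing in (i).

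The key point is then that, for $S\in\beauty$, the upper and lower bounds are not independent. Since $\F_W(S)\in S_0(\Rdd)$ and $S_0(\Rdd)$ is a Banach algebra under pointwise multiplication that is stable under complex conjugation, $|\F_W(S)|^2\in S_0(\Rdd)$, so by Lemma \ref{lem:s0periodization} the periodization $P_{\Lambda^\circ}(|\F_W(S)|^2)$ belongs to $A(\Rdd/\Lambda^\circ)$ and in particular is a continuous, nonnegative function on the compact group $\Rdd/\Lambda^\circ$. Such a function is automatically bounded above, so the upper Riesz bound always holds; and it is bounded below by a positive constant if and only if it vanishes nowhere on $\Rdd/\Lambda^\circ$, since a continuous positive function on a compact space attains a positive minimum. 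Combining this with the characterization above yields (i) $\iff$ (ii).

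I expect the only real obstacle to be bookkeeping: matching the normalizations and the change of variables so that \cite{Benedetto:1998} applies verbatim -- in particular confirming that its dual lattice corresponds to $\Lambda^\circ$ as defined here, and that the Haar normalization on $\Rdd/\Lambda^\circ$ used in $P_{\Lambda^\circ}$ is consistent with the one implicit in the Riesz-bound formula. These normalizations affect only the numerical values of $A$ and $B$ and not the zero set of $P_{\Lambda^\circ}(|\F_W(S)|^2)$, so they do not affect the stated equivalence, but they should be spelled out. A self-contained alternative avoiding \cite{Benedetto:1998} would be to compute directly, using Lemma \ref{lem:translation} and the Weyl commutation relations, that the Gram matrix of $\{\alpha_\lambda(S)\}_{\lambda\in\Lambda}$ in $\HS$ has Toeplitz structure with $(\lambda,\mu)$-entry given by $S\star_\Lambda\check{S}^*(\lambda-\mu)$, identify it after $\F_\sigma^\Lambda$ with multiplication by the continuous symbol $\F_\sigma^\Lambda(S\star_\Lambda\check{S}^*)$ on $L^2(\Rdd/\Lambda^\circ)$, and then note that such a multiplication operator is bounded below exactly when its symbol has no zero.
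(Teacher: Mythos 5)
Your proposal is correct and follows essentially the same route as the paper: reduce via the unitary, translation-intertwining Weyl transform to lattice translates of $\weyl_S$ in $L^2(\Rdd)$, apply the Benedetto--Li characterization in terms of the periodization of $|\F_\sigma(\weyl_S)|^2=|\F_W(S)|^2$ over $\Lambda^\circ$, identify that periodization with $\F_\sigma^\Lambda(S\star_\Lambda\check{S}^*)$ via Corollary \ref{cor:orthogonalabsolute}, and use that $P_{\Lambda^\circ}(|\F_W(S)|^2)\in A(\Rdd/\Lambda^\circ)$ is continuous on a compact group so that the two-sided bound is equivalent to nonvanishing. The only additions beyond the paper's argument are your explicit normalization caveats and the sketched Gram-matrix alternative, neither of which changes the substance.
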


\begin{proof}
The equality in $(i)$ is Corollary \ref{cor:orthogonalabsolute}. By the preceding discussion, $\{\alpha_\lambda(S)\}_{\lambda \in \Lambda}$ is a Riesz sequence in $\HS$ if and only if  $\{T_\lambda(\weyl_S)\}_{\lambda \in \Lambda}$ is a Riesz sequence in $L^2(\Rdd)$. 
 The result from \cite{Benedetto:1998} (see \cite{Benedetto:2006} for a statement for general lattices and symplectic Fourier transform) says that $\{T_\lambda(\weyl_S)\}_{\lambda \in \Lambda}$ is a Riesz sequence if and only if there exist $A,B>0$ such that 
\begin{equation*}
  A\leq \frac{1}{|\Lambda|} \sum_{\lambda^\circ\in \Lambda^\circ} |\F_\sigma(\weyl_S)(z+\lambda^\circ)|^2 \leq B \text{ for any } z\in \Rdd.
\end{equation*}
Since the Weyl transform and Fourier-Wigner transform are related by $\F_\sigma(\weyl_S)=\F_W(S)$, we we may restate this condition as 
\begin{equation} \label{eq:rieszinproof}
  A\leq \frac{1}{|\Lambda|}  \sum_{\lambda^\circ\in \Lambda^\circ} |\F_W(S)(z+\lambda^\circ)|^2 \leq B \text{ for any } z\in \Rdd.
\end{equation}
Note that the middle term is $ P_{\Lambda^\circ}(|\F_W(S)|^2)(\dot{z})$, and since $S\in \beauty$ we know that $|\F_W(S)|^2\in S_0(\Rdd)$. Therefore $P_{\Lambda^\circ}(|\F_W(S)|^2)\in A(\Rdd/\Lambda^\circ)$ by Lemma \ref{lem:s0periodization}, which in particular means that $P_{\Lambda^\circ}(|\F_W(S)|^2)$ is a continuous function on the compact space $\Rdd/\Lambda^\circ$. For a continuous function on a compact space, condition \eqref{eq:rieszinproof} is equivalent to having no zeros. This completes the proof. 
\end{proof}

\begin{rem}
\begin{enumerate}[(i)]
	\item Since we assume $S\in \beauty$, the first condition above is in fact equivalent to $\left\{ \alpha_\lambda(S) \right\}_{\lambda \in \Lambda}$ generating a \textit{frame sequence} in $\HS$, which is a weaker statement than (2) above. The proof of this in \cite{Benedetto:2006} for Gabor multipliers works in our more general setting. 
	\item As mentioned in the introduction, Feichtinger \cite{Feichtinger:2002} used the Kohn-Nirenberg symbol rather than the Weyl symbol. This makes no difference for our purposes -- we have opted for the Weyl symbol as it is related to $\F_W$ by a symplectic Fourier transform. 
\end{enumerate}
	\end{rem}
If $\{\alpha_\lambda(S)\}_{\lambda \in \Lambda}$ is a Riesz sequence in $\HS$, the \textit{synthesis operator} is the map $D_S:\ell^2(\Lambda)\to \HS$ given by
\begin{equation*}
  D_S(c)=c\star_\Lambda S=\sum_{\lambda\in \Lambda} c(\lambda) \alpha_\lambda(S),
\end{equation*}
and the sum $\sum_{\lambda\in \Lambda} c(\lambda) \alpha_\lambda(S)$ converges unconditionally in $\HS$ for each $c\in \ell^2(\Lambda)$ \cite[Cor. 3.2.5]{Christensen:2016}. We also get by \cite[Thm. 5.5.1]{Christensen:2016} that 
\begin{equation} \label{eq:closedspanconvolution}
  \overline{\text{span}\{\alpha_\lambda(S):\lambda \in \Lambda\}}=\ell^2(\Lambda)\star S,
\end{equation}
where the closure is taken with respect to the norm in $\HS$.

\subsection{The biorthogonal system and best approximation} \label{sec:biorthogonal}
Any Riesz sequence has a so-called biorthogonal sequence and, by the theory of frames of translates \cite[Prop. 9.4.2]{Christensen:2016}, if the Riesz sequence is of the form $\{\alpha_\lambda(S)\}_{\lambda \in \Lambda}$ for some $S\in \beauty$, then the biorthogonal system has the same form. This means that there exists $S^\prime \in \HS$ such that the biorthogonal system is 
  \begin{equation*}
  \{\alpha_\lambda(S^\prime)\}_{\lambda \in \Lambda},
\end{equation*}
and biorthogonality means that 
\begin{equation*}
  \inner{\alpha_\lambda(S)}{\alpha_{\lambda'}(S')}_{\HS}=\delta_{\lambda,\lambda'},
\end{equation*}
where $\delta_{\lambda,\lambda'}$ is the Kronecker delta.
Now note that for $T\in \HS$ the definition \eqref{eq:convopop} of $T\star_\Lambda S'$ implies that 
\begin{equation*} 
  \inner{T}{\alpha_\lambda(S')}_\HS=T\star_\Lambda \check{S'}^*(\lambda),
\end{equation*}
so if we define $R:=\check{S^\prime}^*$ we have 
\begin{equation} \label{eq:convolutionasinnerproduct} 
  \inner{T}{\alpha_\lambda(S')}_\HS=T\star_\Lambda R(\lambda).
\end{equation}
With this observation we can formulate the standard properties of the biorthogonal sequence using convolutions with $R$. 

\begin{lem} \label{lem:biorthogonal}
	Assume that $\{\alpha_\lambda(S)\}_{\lambda\in \Lambda}$ with $S\in \beauty$ is a Riesz sequence in $\HS$. Let $$V^2:=\overline{\text{span}\{\alpha_\lambda(S):\lambda \in \Lambda\}}=\ell^2(\Lambda)\star S.$$  With $R$ defined as above, we have that 
	\begin{enumerate}[(i)]
		\item $S\star_\Lambda R(\lambda)=\delta_{\lambda,0}.$
\item For any $T\in V^2$, $T\star_\Lambda R\in \ell^2(\Lambda)$.
\item For any $ T\in V^2$, \begin{equation*}
  T=(T\star_\Lambda R)\star_\Lambda S.
\end{equation*}
\end{enumerate}
\end{lem}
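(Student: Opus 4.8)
The plan is to transport the standard facts about biorthogonal systems of Riesz sequences in a Hilbert space, as found in \cite{Christensen:2016}, to the present setting via the identity \eqref{eq:convolutionasinnerproduct}. Recall that if $\{\alpha_\lambda(S)\}_{\lambda\in\Lambda}$ is a Riesz sequence in $\HS$ with biorthogonal system $\{\alpha_\lambda(S')\}_{\lambda\in\Lambda}\subset V^2$, then by \cite[Sec. 3.6, Prop. 9.4.2]{Christensen:2016} we have biorthogonality $\inner{\alpha_\lambda(S)}{\alpha_{\lambda'}(S')}_\HS=\delta_{\lambda,\lambda'}$, the coefficient functionals are bounded, and every $T\in V^2$ has the expansion $T=\sum_{\lambda\in\Lambda}\inner{T}{\alpha_\lambda(S')}_\HS\,\alpha_\lambda(S)$ with unconditional convergence in $\HS$. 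Each of the three claims is then a direct rewriting of one of these facts using $\inner{T}{\alpha_\lambda(S')}_\HS=T\star_\Lambda R(\lambda)$.

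For $(i)$, apply \eqref{eq:convolutionasinnerproduct} with $T=S$: $S\star_\Lambda R(\lambda)=\inner{S}{\alpha_\lambda(S')}_\HS=\inner{\alpha_0(S)}{\alpha_\lambda(S')}_\HS=\delta_{0,\lambda}=\delta_{\lambda,0}$, using $\alpha_0=\mathrm{id}$ and biorthogonality. For $(ii)$, again by \eqref{eq:convolutionasinnerproduct} the sequence $T\star_\Lambda R$ equals $\{\inner{T}{\alpha_\lambda(S')}_\HS\}_{\lambda\in\Lambda}$, which is the analysis sequence of $T$ with respect to the Riesz sequence $\{\alpha_\lambda(S)\}$; since the biorthogonal system is itself a Bessel sequence (indeed a Riesz sequence), these coefficients lie in $\ell^2(\Lambda)$ with norm control, cf. \cite[Thm. 3.6.2]{Christensen:2016}. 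For $(iii)$, set $c:=T\star_\Lambda R\in\ell^2(\Lambda)$ from $(ii)$; the Hilbert-space expansion $T=\sum_\lambda\inner{T}{\alpha_\lambda(S')}_\HS\,\alpha_\lambda(S)=\sum_\lambda c(\lambda)\alpha_\lambda(S)$ converges unconditionally in $\HS$ and by the definition of $\star_\Lambda$ on $\ell^2(\Lambda)$ (the synthesis operator $D_S$) the right-hand side is exactly $c\star_\Lambda S=(T\star_\Lambda R)\star_\Lambda S$, giving the claim.

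The only point requiring a little care is making sure that the abstract frame-theoretic statements are applied to the correct Riesz sequence: one must note that $\{\alpha_\lambda(S)\}$ and $\{\alpha_\lambda(S')\}$ play symmetric roles, so that the biorthogonal expansion can be written with $\alpha_\lambda(S)$ as synthesis vectors and $\alpha_\lambda(S')$ as analysis vectors, which is precisely the form needed to produce a convolution $c\star_\Lambda S$ in $(iii)$. I expect no genuine obstacle here — the content of the lemma is entirely the translation of notation \eqref{eq:convolutionasinnerproduct} together with \eqref{eq:closedspanconvolution}; the mild subtlety is simply keeping track of which of $S$ and $R=\check{S'}^*$ appears in each slot, and invoking the correct direction (analysis versus synthesis) of the Riesz-sequence machinery from \cite{Christensen:2016}.
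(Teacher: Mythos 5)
Your proposal is correct and follows exactly the paper's route: the paper likewise proves all three parts by observing that \eqref{eq:convolutionasinnerproduct} identifies $T\star_\Lambda R(\lambda)$ with $\inner{T}{\alpha_\lambda(S')}_{\HS}$ and then invoking the standard properties of the biorthogonal system of a Riesz sequence from \cite[Thm. 3.6.2]{Christensen:2016}. Your write-up merely spells out the three translations in more detail than the paper does.
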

\begin{proof}
	This is simply a restatement of the properties of the biorthogonal sequence of a Riesz sequence using the relation $\inner{T}{\alpha_\lambda(S')}_{\HS}=T\star_\Lambda R(\lambda)$ -- with this observation, parts $(i),(ii)$ and $(iii)$ follow from \cite[Thm. 3.6.2]{Christensen:2016}.
\end{proof}
\begin{rem}
\begin{enumerate}[(i)]
	\item If the convolution of three operators were associative, we could find for any $T\in \HS$ (not just $T\in V^2$ as above) that $T=(T\star_\Lambda R)\star_\Lambda S$, since $T\star_\Lambda (R\star_\Lambda S)=T\star_\Lambda \delta_{\lambda,0}=T$. However, we will soon see that the convolution of three operators is \textit{not} associative.
	\item For $T,R\in \HS$, we have strictly speaking not defined $T\star_\Lambda R$ (since \eqref{eq:convopop} has stronger assumptions than simply $\HS$). However, it is clear by the Cauchy Schwarz inequality for $\HS$ that $$|T\star_\Lambda R(\lambda)|=|\inner{T}{\alpha_\lambda(S')}_{\HS}|\leq \|T\|_{\HS} \|S'\|_\HS,$$ so we can define $T\star_\Lambda R\in \ell^\infty(\Lambda)$ by \eqref{eq:convopop} also in this case. 
\end{enumerate}
	
\end{rem}

We will now answer two natural questions. First, to what extent does $R$ inherit the nice properties of $S$ -- is it true that $R\in \beauty$? Then, how is $R$ related to $S$? The answer is provided by the following theorem, first proved by Feichtinger \cite[Thm. 5.17]{Feichtinger:2002} for Gabor multipliers, and the proof finds a natural formulation using our tools.

\begin{thm} \label{thm:biorthogonal}
	Assume that $S\in \beauty$ and that $\left\{ \alpha_{\lambda}(S) \right\}_{\lambda\in \Lambda}$ is a Riesz sequence in $\HS$. If $R$ is defined as above, then $R\in \beauty$ and $R=b \star_\Lambda \check{S}^*$ where $b\in \ell^1(\Lambda)$ are the symplectic Fourier coefficients of 
	\begin{equation*}
  \frac{1}{\F_\sigma^\Lambda
	(S\star_\Lambda \check{S}^*)}=\frac{1}{P_{\Lambda^\circ} \left(|\F_W(S)|^2\right)}.
\end{equation*} 

\end{thm}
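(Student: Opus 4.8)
The plan is to write down the operator $R$ explicitly as $b\star_\Lambda\check S^*$, with $b$ furnished by Wiener's $1/f$-lemma, and then to verify that this candidate really equals $\check{S'}^*$ by checking the two properties characterising the biorthogonal operator inside $V^2$.

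\emph{Step 1 (construction of $b$).} Put $c:=S\star_\Lambda\check S^*$. By Proposition \ref{prop:convwelldefined} and Lemma \ref{lem:translation}(iii) this is a sequence in $\ell^1(\Lambda)$, and by Corollary \ref{cor:orthogonalabsolute} one has $\F_\sigma^\Lambda(c)=P_{\Lambda^\circ}(|\F_W(S)|^2)$. By Theorem \ref{thm:rieszsequence}, the hypothesis that $\{\alpha_\lambda(S)\}_{\lambda\in\Lambda}$ is a Riesz sequence in $\HS$ is equivalent to $\F_\sigma^\Lambda(c)$ having no zeros on the compact group $\Rdd/\Lambda^\circ$. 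Since $\ell^1(\Lambda)$ with the convolution $\ast_\Lambda$ is the group algebra of the discrete abelian group $\Lambda$, with Gelfand transform $\F_\sigma^\Lambda$ and spectrum $\Rdd/\Lambda^\circ$, Wiener's lemma yields $b\in\ell^1(\Lambda)$ with $c\ast_\Lambda b=\delta_{\cdot,0}$; applying $\F_\sigma^\Lambda$ gives $\F_\sigma^\Lambda(b)=1/\F_\sigma^\Lambda(c)=1/P_{\Lambda^\circ}(|\F_W(S)|^2)$, so $b$ is exactly the sequence named in the statement. Set $\widetilde R:=b\star_\Lambda\check S^*$; since $\check S^*\in\beauty$ and $\beauty$ is an $\ell^1(\Lambda)$-module under $\star_\Lambda$, we get $\widetilde R\in\beauty$ with $\|\widetilde R\|_\beauty\lesssim\|b\|_{\ell^1}\|S\|_\beauty$. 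It remains to identify $\widetilde R$ with $R$.

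\emph{Step 2 (reduction to a single identity).} It suffices to show that $\{\alpha_\lambda(\check{\widetilde R}^*)\}_{\lambda\in\Lambda}$ is biorthogonal to $\{\alpha_\lambda(S)\}_{\lambda\in\Lambda}$ and is contained in $V^2=\ell^2(\Lambda)\star S$ (notation of Lemma \ref{lem:biorthogonal}): the biorthogonal sequence of a Riesz sequence lying inside its closed span is unique, so this forces $\check{\widetilde R}^*=S'$, i.e. $\widetilde R=\check{S'}^*=R$. For the containment, expanding $\widetilde R=\sum_\lambda b(\lambda)\alpha_\lambda(\check S^*)$ and using the elementary identities $\widecheck{\alpha_\lambda(X)}=\alpha_{-\lambda}(\check X)$, $(\alpha_\lambda(X))^*=\alpha_\lambda(X^*)$ and $\widecheck{\check S^*}=S^*$ gives $\check{\widetilde R}^*=\sum_\lambda\overline{b(\lambda)}\,\alpha_{-\lambda}(S)=\widetilde b\star_\Lambda S$ with $\widetilde b(\lambda)=\overline{b(-\lambda)}\in\ell^1(\Lambda)\subset\ell^2(\Lambda)$, so $\check{\widetilde R}^*\in\ell^2(\Lambda)\star S=V^2$ by \eqref{eq:closedspanconvolution}. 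For the biorthogonality, formula \eqref{eq:convolutionasinnerproduct} applied with $S'$ replaced by $\check{\widetilde R}^*$ (so the role of ``$R$'' there is played by $\widetilde R$), together with the translation covariance $\alpha_\mu(X)\star_\Lambda Y(\nu)=(X\star_\Lambda Y)(\nu-\mu)$, reduces $\inner{\alpha_\mu(S)}{\alpha_\nu(\check{\widetilde R}^*)}_\HS=\delta_{\mu\nu}$ to the single identity $S\star_\Lambda\widetilde R=\delta_{\cdot,0}$.

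\emph{Step 3 (the identity $S\star_\Lambda\widetilde R=\delta_{\cdot,0}$, and conclusion).} Since $S,\widetilde R\in\beauty\subset\tco$, Theorem \ref{thm:orthogonality} gives $\F_\sigma^\Lambda(S\star_\Lambda\widetilde R)=P_{\Lambda^\circ}(\F_W(S)\F_W(\widetilde R))$, while Proposition \ref{prop:spreadinggm} together with \eqref{eq:fwcheckadjoint} gives $\F_W(\widetilde R)=\F_\sigma^\Lambda(b)\,\overline{\F_W(S)}$ as continuous functions. Since $\F_\sigma^\Lambda(b)$ is $\Lambda^\circ$-periodic it may be pulled out of the periodisation, so, using Corollary \ref{cor:orthogonalabsolute} and \eqref{eq:fourierseriesofconvolution},
\begin{equation*}
  \F_\sigma^\Lambda(S\star_\Lambda\widetilde R)=\F_\sigma^\Lambda(b)\,P_{\Lambda^\circ}(|\F_W(S)|^2)=\F_\sigma^\Lambda(b)\,\F_\sigma^\Lambda(c)=\F_\sigma^\Lambda(b\ast_\Lambda c)=\F_\sigma^\Lambda(\delta_{\cdot,0})=1 ,
\end{equation*}
and injectivity of $\F_\sigma^\Lambda$ on $\ell^1(\Lambda)$ (note $S\star_\Lambda\widetilde R\in\ell^1(\Lambda)$ by Proposition \ref{prop:convwelldefined}) yields $S\star_\Lambda\widetilde R=\delta_{\cdot,0}$. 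By Step 2 we conclude $R=\widetilde R=b\star_\Lambda\check S^*\in\beauty$ with $b$ as described. The only genuinely nonelementary ingredient is Wiener's lemma (equivalently, invertibility in $\ell^1(\Lambda)$ of a sequence whose symplectic Fourier series has no zeros on $\Rdd/\Lambda^\circ$); everything else is bookkeeping with $\alpha_\lambda$, $\check{\ }$, $^*$ and the module properties of $\star_\Lambda$, the only delicate point being the sign flips arising from $\widecheck{\alpha_\lambda(\cdot)}=\alpha_{-\lambda}(\check{\ })$.
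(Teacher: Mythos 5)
Your proof is correct, and it rests on the same two pillars as the paper's own argument -- the identity $\F_\sigma^\Lambda(S\star_\Lambda \check{S}^*)=P_{\Lambda^\circ}(|\F_W(S)|^2)$ from Corollary \ref{cor:orthogonalabsolute} and Wiener's lemma for $A(\Rdd/\Lambda^\circ)$ -- but it runs the logic in the opposite direction. The paper starts from the known structure of the biorthogonal generator: by \cite[Thm. 3.6.2]{Christensen:2016} one has $S'=b'\star_\Lambda S$ for some $b'\in\ell^2(\Lambda)$, hence $R=b\star_\Lambda\check{S}^*$ with $b(\lambda)=\overline{b'(-\lambda)}$ a priori only in $\ell^2$; the relation $R\star_\Lambda S=\delta_{\lambda,0}$ together with associativity then forces $\F_\sigma^\Lambda(b)=1/P_{\Lambda^\circ}(|\F_W(S)|^2)$, and Wiener's lemma upgrades $b$ to $\ell^1(\Lambda)$ after the fact. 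You instead manufacture $b\in\ell^1(\Lambda)$ up front as the inverse of $S\star_\Lambda\check{S}^*$ in the group algebra, set $\widetilde{R}=b\star_\Lambda\check{S}^*\in\beauty$ by the module property, and then identify $\widetilde{R}$ with $R$ via uniqueness of the biorthogonal system inside $V^2$; this costs you the verification in Steps 2--3 (the sign-flip bookkeeping $\widecheck{\alpha_\lambda(\cdot)}=\alpha_{-\lambda}(\widecheck{\ \cdot\ })$, the covariance $\alpha_\mu(X)\star_\Lambda Y(\nu)=(X\star_\Lambda Y)(\nu-\mu)$, and the computation showing $S\star_\Lambda\widetilde{R}=\delta_{\cdot,0}$), all of which check out. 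What your route buys is that it bypasses the citation giving $S'\in\ell^2(\Lambda)\star_\Lambda S$, replacing it by the elementary uniqueness of biorthogonal sequences within the closed span, and it delivers $R\in\beauty$ by construction rather than as a corollary; the price is a longer verification. One cosmetic remark: your Step 3 can be shortened by using the associativity proposition directly, $S\star_\Lambda(b\star_\Lambda\check{S}^*)=b\ast_\Lambda(S\star_\Lambda\check{S}^*)=b\ast_\Lambda c=\delta_{\cdot,0}$, avoiding the detour through Theorem \ref{thm:orthogonality} and the periodization.
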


\begin{proof}
By \cite[Thm. 3.6.2]{Christensen:2016}, the generator $S^\prime$ of the biorthogonal system belongs to $V^2$, hence there exists some $b^\prime\in \ell^2(\Lambda)$ such that $S^\prime=b^\prime \star_\Lambda S$. Since $R=\check{S^\prime}^*$, one easily checks by the definitions of $\check{\ }$ and $^*$ that
\begin{equation*}
  R=(b'\star S)\check{\ }^*=b\star_\Lambda \check{S}^*
\end{equation*}
if we define $b(\lambda)=\overline{b^\prime(-\lambda)}$. By part $(i)$ of Lemma \ref{lem:biorthogonal} and the associativity of convolutions, we have
\begin{equation*}
  b\ast_\Lambda (\check{S}^* \star_\Lambda S)=(b\star_\Lambda \check{S}^*) \star_\Lambda S=R\star_\Lambda S = \delta_{\lambda,0}.
\end{equation*}
 Taking the symplectic Fourier series of this equation using \eqref{eq:fourierseriesofconvolution} and Corollary \ref{cor:orthogonalabsolute}, we find for a.e. $\dot{z}\in \Rdd/\Lambda^\circ$ 
 \begin{equation*}
  \F_\sigma^\Lambda(b)(\dot{z})\F_\sigma^\Lambda(\check{S}^* \star_\Lambda S)(\dot{z})= \F_\sigma^\Lambda(b)(\dot{z})P_{\Lambda^\circ}\left( |\F_W(S)|^2 \right)(\dot{z})=1,
\end{equation*}
hence 
\begin{equation*}
  \F_\sigma^\Lambda(b)(\dot{z})=\frac{1}{P_{\Lambda^\circ} \left(|\F_W(S)|^2\right)},
\end{equation*}
and by assumption on $S$ (see Theorem \ref{thm:rieszsequence} and its proof) the denominator is bounded from below by a positive constant. Since $S\in \beauty$, we know that $|\F_W(S)|^2\in S_0(\Rdd)$, and therefore Lemma \ref{lem:s0periodization} implies that $P_{\Lambda^\circ} \left(|\F_W(S)|^2\right)\in A(\Rdd/\Lambda^\circ)$. By Wiener's lemma \cite[Thm. 6.1.1]{Reiter:2000}, we get $\frac{1}{P_{\Lambda^\circ} \left(|\F_W(S)|^2\right)}\in A(\Rdd/\Lambda^\circ)$. In other words, $b\in \ell^1(\Lambda)$. Since $b\in \ell^1(\Lambda)$ and $\check{S}^*\in \beauty$, it follows that $R=b\star_\Lambda \check{S}^*\in \beauty$.
\end{proof}

To prepare for the next result, fix $S\in \beauty$ and let $$V^\infty=\ell^\infty(\Lambda)\star_\Lambda S,$$ hence $V^\infty$ is the set of operators given as a convolution $c\star_\Lambda S$ for $c\in \ell^\infty(\Lambda)$. The first part of the next result says that when $\{\alpha_\lambda (S)\}_{\lambda \in \Lambda}$ is a Riesz sequence, then the Schatten-$p$ class properties of $c\star_\Lambda S$ are precisely captured by the $\ell^p$ properties of $c$. This result appears to be a new result even for Gabor multipliers. 
We also determine for any $T\in \HS$ the best approximation (in the norm $\|\cdot\|_{\HS}$) of $T$ by an operator of the form $c\star_\Lambda S$. See \cite[Thm. 5.17]{Feichtinger:2002} and \cite[Thm. 19]{Dorfler:2010} for the statement for Gabor multipliers.

 \begin{cor} \label{cor:banachisomorphism}
	Assume that $S\in \beauty$ and that $\{\alpha_\lambda (S)\}_{\lambda \in \Lambda}$ is a Riesz sequence in $\HS$, and let $R$ be as above.  
	\begin{enumerate}[(i)]
		\item For any $1\leq p \leq \infty$ the map $D_S:\ell^p(\Lambda)\to \SC^p\cap V^\infty$ given by $$D_S(c)= c\star_\Lambda S$$ is a Banach space isomorphism, with inverse $C_R:\SC^p\cap V^\infty\to \ell^p(\Lambda)$ given by $$C_R(T)= T\star_\Lambda R.$$ Hence $V^\infty\cap \SC^p=\ell^{p}(\Lambda)\star_\Lambda S$ and $\|c\|_{\ell^p}\lesssim\|c\star_\Lambda S\|_{\SC^p}\lesssim \|c\|_{\ell^p}$.
		\item For any $T\in \HS$, the best approximation in $\|\cdot\|_{\HS}$ of $T$ by an operator $c\star_\Lambda S$ with $c\in \ell^2(\Lambda)$ is given by  $$c=T\star_\Lambda R.$$ 
		Equivalently, the symplectic Fourier series of $c$ is given by 
 $$\F_\sigma^\Lambda (c)=\frac{P_{\Lambda^\circ}\left[ \overline{\F_W(S)}F_W(T)\right]}{P_{\Lambda^\circ} |\F_W(S)|^2 }.$$ 
	\end{enumerate} 
\end{cor}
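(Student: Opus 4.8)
The plan is to prove parts~(i) and~(ii) separately, the unifying idea being that the biorthogonal generator $R$ acts as a two-sided inverse of $S$ for the convolution $\star_\Lambda$; full associativity of $\star_\Lambda$ is not available, but it holds in exactly the cases needed here.

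\emph{Part (i).} By Theorem~\ref{thm:biorthogonal} we have $R\in\beauty$, so Proposition~\ref{prop:youngschatten} makes the maps $D_S\colon\ell^p(\Lambda)\to\SC^p$, $D_S(c)=c\star_\Lambda S$, and $C_R\colon\SC^p\to\ell^p(\Lambda)$, $C_R(T)=T\star_\Lambda R$, bounded for each $1\le p\le\infty$; moreover $D_S(\ell^p(\Lambda))\subseteq\ell^\infty(\Lambda)\star_\Lambda S=V^\infty$ because $\ell^p(\Lambda)\subseteq\ell^\infty(\Lambda)$. The crux is the identity
\begin{equation*}
	(c\star_\Lambda S)\star_\Lambda R=c\qquad\text{for all }c\in\ell^\infty(\Lambda).
\end{equation*}
I would prove it by pairing the left-hand side with an arbitrary $d\in\ell^1(\Lambda)$ and unwinding the duality definitions: \eqref{eq:dualconvolutions2} and then \eqref{eq:dualconvolutions} turn $\inner{(c\star_\Lambda S)\star_\Lambda R}{d}_{\ell^\infty,\ell^1}$ first into $\inner{c\star_\Lambda S}{d\star_\Lambda\check{R}^*}_{\bo,\tco}$ and then into $\inner{c}{(d\star_\Lambda\check{R}^*)\star_\Lambda\check{S}^*}_{\ell^\infty,\ell^1}$, while the associativity proposition (applicable since $d\in\ell^1(\Lambda)$, $\check{R}^*\in\beauty$ and $\check{S}^*\in\beauty\subseteq\tco$) rewrites the inner convolution as $d\ast_\Lambda(\check{R}^*\star_\Lambda\check{S}^*)$. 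Since $\check{\ }$ and ${}^*$ commute (Lemma~\ref{lem:translation}) one has $\check{R}^*=S'$, hence $\check{R}^*\star_\Lambda\check{S}^*(\lambda)=S'\star_\Lambda\check{S}^*(\lambda)=\inner{S'}{\alpha_\lambda(S)}_\HS=\delta_{\lambda,0}$ by \eqref{eq:convolutionasinnerproduct} and the biorthogonality of $\{\alpha_\lambda(S)\}_{\lambda\in\Lambda}$ and $\{\alpha_\lambda(S')\}_{\lambda\in\Lambda}$; thus $d\ast_\Lambda(\check{R}^*\star_\Lambda\check{S}^*)=d$ and the identity follows. In particular $C_R\circ D_S=\mathrm{id}$ on $\ell^\infty(\Lambda)$, so $D_S\colon\ell^\infty(\Lambda)\to V^\infty$ is a bijection (it is onto by the definition of $V^\infty$) with inverse $C_R$. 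Restricting, $D_S$ sends $\ell^p(\Lambda)$ into $\SC^p\cap V^\infty$, and conversely if $T\in\SC^p\cap V^\infty$ then $T=c\star_\Lambda S$ with $c=T\star_\Lambda R\in\ell^p(\Lambda)$ by Proposition~\ref{prop:youngschatten}(i); hence $D_S$ and $C_R$ restrict to mutually inverse bounded bijections between $\ell^p(\Lambda)$ and $\SC^p\cap V^\infty$, which yields the Banach space isomorphism, the equality $V^\infty\cap\SC^p=\ell^p(\Lambda)\star_\Lambda S$, and the two-sided norm estimate.

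\emph{Part (ii).} By \eqref{eq:closedspanconvolution} the operators $c\star_\Lambda S$ with $c\in\ell^2(\Lambda)$ are precisely the elements of the closed subspace $V^2$, so the best $\HS$-approximation of $T$ is the orthogonal projection $P_{V^2}T$, and since $D_S\colon\ell^2(\Lambda)\to V^2$ is injective there is a unique coefficient sequence. To identify it I would use that $\{\alpha_\lambda(S)\}_{\lambda\in\Lambda}$ is a Riesz basis of $V^2$ with biorthogonal system $\{\alpha_\lambda(S')\}_{\lambda\in\Lambda}\subseteq V^2$, so $P_{V^2}T=\sum_{\lambda}\inner{T}{\alpha_\lambda(S')}_\HS\,\alpha_\lambda(S)$; by \eqref{eq:convolutionasinnerproduct} the coefficients are $\inner{T}{\alpha_\lambda(S')}_\HS=(T\star_\Lambda R)(\lambda)$, and $T\star_\Lambda R\in\ell^2(\Lambda)$ by Proposition~\ref{prop:youngschatten}(i); hence $c=T\star_\Lambda R$. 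For the Fourier description I would combine four ingredients: Theorem~\ref{thm:biorthogonal}, which writes $R=b\star_\Lambda\check{S}^*$ with $\F_\sigma^\Lambda(b)=1/P_{\Lambda^\circ}(|\F_W(S)|^2)$; associativity and commutativity of convolutions, which give $T\star_\Lambda R=b\ast_\Lambda(T\star_\Lambda\check{S}^*)$; the convolution theorem \eqref{eq:fourierseriesofconvolution}, which gives $\F_\sigma^\Lambda(c)=\F_\sigma^\Lambda(b)\,\F_\sigma^\Lambda(T\star_\Lambda\check{S}^*)$; and Theorem~\ref{thm:orthogonality} together with \eqref{eq:fwcheckadjoint}, which give $\F_\sigma^\Lambda(T\star_\Lambda\check{S}^*)=P_{\Lambda^\circ}(\F_W(T)\overline{\F_W(S)})$. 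Multiplying these produces the stated formula.

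\emph{The main obstacle.} The associativity proposition and Theorem~\ref{thm:orthogonality} are stated for a trace-class operator, whereas in part~(ii) one only has $T\in\HS$; the remaining work is to extend those identities to $T\in\HS$. I would do this by density, approximating $T$ in $\HS$ by operators $T_n\in\beauty$ — which are dense since $\beauty\cong S_0(\Rdd)$ under the Weyl transform and $S_0(\Rdd)$ is dense in $L^2(\Rdd)$ — and passing to the limit. The delicate point is that one must know the limiting product $\F_W(T)\overline{\F_W(S)}$ lies in a space on which the periodization $P_{\Lambda^\circ}$ is continuous (for instance a Wiener amalgam space $W(L^2,\ell^1)$, which contains $S_0(\Rdd)\cdot L^2(\Rdd)$), so that the $\ell^2$-convergence of $T_n\star_\Lambda\check{S}^*$ is matched by convergence of the right-hand sides in $L^2(\Rdd/\Lambda^\circ)$. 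Everything else is bookkeeping with the duality pairings and with the isometry properties of $\alpha_\lambda$, $\check{\ }$ and ${}^*$ from Lemma~\ref{lem:translation}.
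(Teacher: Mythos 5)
Your proposal is correct and follows essentially the same route as the paper: Young's inequality (Proposition \ref{prop:youngschatten}) for boundedness, the biorthogonality relation $S\star_\Lambda R=\delta_{\lambda,0}$ plus associativity for the mutual-inverse identities, and Theorem \ref{thm:biorthogonal} together with \eqref{eq:fourierseriesofconvolution} and Theorem \ref{thm:orthogonality} for the Fourier formula in part (ii). If anything you are more careful than the paper at the two points you flag --- extending $(c\star_\Lambda S)\star_\Lambda R=c$ beyond $\ell^1$ coefficients via the duality pairings \eqref{eq:dualconvolutions} and \eqref{eq:dualconvolutions2}, and extending Theorem \ref{thm:orthogonality} from $T\in\tco$ to $T\in\HS$ by density --- both of which the paper invokes without comment.
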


 \begin{proof}
\begin{enumerate}[(i)]
	\item 	By Proposition \ref{prop:youngschatten} part $(i)$ we get $\|C_R(T)\|_{\ell^p}\leq \|T\|_{\SC^p} \|R\|_{\beauty}$, and by part $(ii)$ of the same proposition we get $\|D_S(c)\|_{\SC^p}\lesssim \|c\|_{\ell^p}\|S\|_{\beauty}$. Hence both maps in the statement are continuous. It remains to show that the two maps are inverses of each other, which will follow from the associativity of convolutions. First assume that $c\in \ell^p(\Lambda).$ Then
 \begin{equation*}
  C_RD_S(c)=(c\star_\Lambda S)\star_\Lambda R=c\ast_\Lambda (S\star_\Lambda R)=c,
\end{equation*}
	where we have used associativity and part $(i)$ of Lemma \ref{lem:biorthogonal}. Then assume $T\in V^\infty\cap \SC^p$, so that $T=c\star_\Lambda S$ for $c\in \ell^\infty(\Lambda)$. We find 
	\begin{equation*}
  D_SC_R(c\star_\Lambda S)=((c\star_\Lambda S )\star_\Lambda R)\star_\Lambda S= (c\ast_\Lambda (S \star_\Lambda R))\star_\Lambda S=c\star_\Lambda S.
\end{equation*}
Hence $D_S$ and $C_R$ are inverses. In particular $V^\infty \cap \SC^p=\ell^p(\Lambda)\star_\Lambda S$ as $D_S$ is onto $V^\infty \cap \SC^p$, and $V^\infty \cap \SC^p$ is closed in $\SC^p$ (hence a Banach space) since $D_S:\ell^p(\Lambda)\to \SC^p$ has a left inverse $C_R$ and therefore has a closed range in $\SC^p$.
	\item We claim that the map $T\mapsto (T\star_\Lambda R)\star_\Lambda S$ is the orthogonal projection from $\HS$ onto $\ell^2(\Lambda)\star_\Lambda S$, which is a closed subset of $\HS=\SC^2$ by part $(i)$ (or \eqref{eq:closedspanconvolution}). If $T=c\star_\Lambda S$ for some $c\in \ell^2(\Lambda)$, then $c=T\star_\Lambda R$ by part $(i)$ -- therefore $T=(T\star_\Lambda R)\star_\Lambda S$. Then assume that $T\in (\ell^2(\Lambda)\star_\Lambda S)^\perp$. As we saw in \eqref{eq:convolutionasinnerproduct}, we can write 
		\begin{equation} \label{eq:proof:bestapprox}
  T\star_\Lambda R(\lambda)=\inner{T}{\alpha_{\lambda}(S')}_{\HS}.
\end{equation}
From the proof of Theorem \ref{thm:biorthogonal}, $S'=b'\star_\Lambda S$ for some $b'\in \ell^2(\Lambda)$. One easily checks that $$\alpha_\lambda(S')=\alpha_\lambda(b'\star_\Lambda S)=T_{\lambda}b'\star_\Lambda S,$$ where $T_\lambda b'(\lambda')=b'(\lambda'-\lambda)$. It follows that $\alpha_{\lambda} (S')\in \ell^2(\Lambda)\star_\Lambda S$ for any $\lambda \in \Lambda.$ Hence if $T\in (\ell^2(\Lambda)\star_\Lambda S)^\perp,$ \eqref{eq:proof:bestapprox} shows that $(T\star_\Lambda R)\star_\Lambda S=0$. 
 Finally, to obtain the equivalent expression recall from Theorem \ref{thm:biorthogonal} that $R=b\star_\Lambda \check{S}^*$ for $b\in \ell^1(\Lambda).$ Hence by associativity and commutativity of convolutions, $$c=T\star_\Lambda R=b\star_\Lambda (T\star_\Lambda \check{S}^*).$$ It follows from \eqref{eq:fourierseriesofconvolution} that we get $$\F_\sigma^\Lambda(c)=\F_\sigma^\Lambda(b)\F_\sigma^\Lambda(T\star_\Lambda \check{S}^*).$$ We have a known expression for $\F_\sigma^\Lambda(b)$ from Theorem \ref{thm:biorthogonal}, and a known expression for $\F_\sigma^\Lambda(T\star_\Lambda \check{S}^*)$ from Theorem \ref{thm:orthogonality} -- inserting these expressions into the equation above yields the desired result. 
\end{enumerate}
 \end{proof}

The key to the results of this section is Wiener's lemma, used in the proof of Theorem \ref{thm:biorthogonal}. In fact, we may interpret these results as a variation of Wiener's lemma. To see this, recall that $V^2=\overline{\text{span}\{\alpha_\lambda(S):\lambda \in \Lambda\}}=\ell^2(\Lambda)\star_\Lambda S\subset \HS$. Then $\{\alpha_\lambda(S)\}_{\lambda \in \Lambda}$ is a Riesz sequence if and only if the convolution map $D_S:\ell^2(\Lambda)\to V^2$ given by $$D_S(c)= c\star_\Lambda S$$ has a bounded inverse \cite[Thm. 3.6.6]{Christensen:2016}. Corollary \ref{cor:banachisomorphism} therefore says the following: if $S\in \beauty$ and the convolution map $D_S: \ell^2(\Lambda)\to V^2$ has a bounded inverse, then the inverse is given by the convolution $$C_R(T)= R\star_\Lambda T$$ for some $R\in \beauty$. The similarities with Wiener's lemma are evident when we compare this to the following formulation of Wiener's lemma\cite[Thm. 5.18]{Grochenig:2010}:
\begin{quote}
	If $b\in \ell^1(\Z)$ and the convolution map $\ell^2(\Z)\to \ell^2(\Z)$ defined by  $$c\mapsto c\ast_\Z b$$ has a bounded inverse on $\ell^2(\Z)$, then the inverse is given by the convolution map $$c\mapsto c\ast_{\Z} b'$$ for some $b'\in \ell^1(\Z)$.

\end{quote}

\section{Tauberian theorems} \label{sec:tauberian}
In the continuous setting, where one considers functions on $\Rdd$ and the convolutions briefly introduced at the beginning of Section \ref{sec:convolutions}, a version of Wiener's Tauberian theorem for operators was obtained by Kiukas et al. \cite{Kiukas:2012}, building on earlier work by Werner \cite{Werner:1984}. This theorem consists of a long list of equivalent statements for $\SC^p$ and $L^p(\Rdd)$ for $p=1,2,\infty$, and as a starting point for our discussion we state a shortened version for $p=2$ below.

\begin{thm} \label{thm:kiukastauberian}
	Let $S\in \tco$. The following are equivalent.
	\begin{enumerate}
		\item The span of $\{\alpha_z(S)\}_{z\in \Rdd}$ is dense in $\HS$.
		\item The set of zeros of $\F_W(S)$ has Lebesgue measure $0$ in $\Rdd$.
		\item The set of zeros of $\F_\sigma(S\star \check{S}^*)$ has Lebesgue measure $0$ in $\Rdd$.
		\item If $f\star S=0$ for $f\in L^2(\Rdd)$, then $f=0$.
		\item If $T\star S=0$ for $T\in \HS$, then $T=0$.
	\end{enumerate}
\end{thm}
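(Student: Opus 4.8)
The plan is to push everything through the Fourier--Wigner transform, under which all five conditions become statements about the single function $g := \F_W(S)$, and then to observe that the $p=2$ case does not actually require the hard Wiener Tauberian theorem but only elementary facts about multiplication operators on $L^2(\Rdd)$ and the completeness of modulates. Recall that $\F_W : \HS \to L^2(\Rdd)$ is unitary and that $\F_W(S) \in C_0(\Rdd) \cap L^\infty(\Rdd)$ whenever $S \in \tco$ (Riemann--Lebesgue for $\F_W$, together with $|\tr(\pi(-z)S)| \le \|S\|_\tco$), so in particular $g \in L^2(\Rdd) \cap L^\infty(\Rdd)$.

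First I would assemble the dictionary. Using $\F_W(\alpha_z(S))(w) = e^{2\pi i \sigma(z,w)} \F_W(S)(w)$ and the unitarity of $\F_W$, condition (1) says exactly that the modulates $\{ \chi_z \cdot g : z \in \Rdd \}$ span a dense subspace of $L^2(\Rdd)$, where $\chi_z(w) = e^{2\pi i \sigma(z,w)}$ exhausts the characters of $\Rdd$ as $z$ varies (nondegeneracy of $\sigma$). From $\F_\sigma(S \star T) = \F_W(S) \F_W(T)$ and $\F_W(\check{S^*}) = \overline{\F_W(S)}$ (equation \eqref{eq:fwcheckadjoint}) one gets $\F_\sigma(S \star \check{S^*}) = |g|^2$, so conditions (2) and (3) are literally the same statement about the null set of $g$. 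For (4) and (5) I would first check that $f \star S$ is a well-defined Hilbert--Schmidt operator for $f \in L^2(\Rdd)$ (its Fourier--Wigner transform $\F_\sigma(f)\, g$ is a product of an $L^2$ function and an $L^\infty$ function, hence in $L^2$) and that $T \star S$ is a well-defined element of $L^2(\Rdd)$ for $T \in \HS$; then, extending the intertwining identities $\F_W(f \star S) = \F_\sigma(f)\, g$ and $\F_\sigma(T \star S) = \F_W(T)\, g$ from the $L^1$/$\tco$ cases of \eqref{eq:FTofconvolutionscontinuous} by density and continuity of $\F_W$, conditions (4) and (5) both say precisely that multiplication by $g$ is injective.

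With the dictionary in hand the equivalences are soft. Condition (2) is by definition ``$g \ne 0$ almost everywhere''. If $g \ne 0$ a.e. then $\F_\sigma(f)\, g = 0$ forces $\F_\sigma(f) = 0$, hence $f = 0$, and likewise $\F_W(T)\, g = 0$ forces $T = 0$ by injectivity of $\F_W$; this gives (2)$\Rightarrow$(4) and (2)$\Rightarrow$(5). Conversely, if $E := \{ g = 0 \}$ has positive Lebesgue measure, choose a measurable $E_0 \subseteq E$ with $0 < |E_0| < \infty$; then $f := \F_\sigma(\mathbf{1}_{E_0}) \ne 0$ satisfies $\F_\sigma(f)\, g = \mathbf{1}_{E_0}\, g = 0$, so $f \star S = 0$, contradicting (4), and similarly $T := \F_W^{-1}(\mathbf{1}_{E_0}) \in \HS$ is nonzero with $T \star S = 0$, contradicting (5). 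For (1)$\iff$(2): if $g$ vanishes on a set of positive measure then so does every element of $\overline{\text{span}\{ \chi_z g \}}$ (the $\chi_z$ are unimodular), so (1) fails; conversely, if $h \in L^2(\Rdd)$ is orthogonal to every $\chi_z g$, then the Fourier transform of the $L^1$ function $h \overline{g}$ vanishes identically, so $h \overline{g} = 0$ a.e., which forces $h = 0$ when $g \ne 0$ a.e. Hence (1)--(5) are all equivalent to $\F_W(S) \ne 0$ almost everywhere.

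The step I expect to be the real work is not any of the equivalences but the \emph{domain bookkeeping} behind (4) and (5): one must verify that the convolutions $f \star S$ and $T \star S$ are meaningful for $f \in L^2(\Rdd)$ and $T \in \HS$ with $S$ merely trace class, and that the Fourier intertwining relations, proved in the excerpt only for $f \in L^1(\Rdd)$ and $S, T \in \tco$, persist in this larger range. This is handled by approximating $f$ in $L^2(\Rdd)$ by elements of $L^1(\Rdd) \cap L^2(\Rdd)$ (resp. $T$ in $\HS$ by elements of $\tco$), using that $\F_W$ is an isometry $\HS \to L^2(\Rdd)$ and bounded $\tco \to C_0(\Rdd)$, and checking that the operator-valued integral and the trace defining the two convolutions converge in the appropriate topologies. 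Once that is in place, everything reduces, as above, to the triviality that a multiplication operator on $L^2(\Rdd)$ is injective exactly when its symbol is nonzero almost everywhere --- which is precisely why, the name notwithstanding, the $p = 2$ Tauberian theorem is the easy one.
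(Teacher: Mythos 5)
Your proof is correct, but note that the paper does not prove this statement at all: Theorem \ref{thm:kiukastauberian} is quoted as known background from Kiukas--Lahti--Schultz--Werner (2012) and Werner (1984), and serves only as the template that the lattice results of Section \ref{sec:tauberian} are modelled on. So there is no in-paper argument to compare against; what can be said is that your self-contained derivation is sound. The dictionary you set up is exactly the right one and each translation is justified by facts the paper does record: unitarity of $\F_W:\HS\to L^2(\Rdd)$, the Riemann--Lebesgue bound $\F_W(\tco)\subset C_0\cap L^\infty$, the modulation identity $\F_W(\alpha_z(S))=e^{2\pi i\sigma(z,\cdot)}\F_W(S)$, and $\F_\sigma(S\star\check{S}^*)=|\F_W(S)|^2$ via \eqref{eq:fwcheckadjoint}. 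Your reduction of (1) to completeness of the modulates $\{\chi_z g\}$ and the $L^1$-uniqueness argument for the orthogonal complement are both fine, as is the construction of counterexamples from $\mathbf{1}_{E_0}$ when the zero set has positive measure. The domain bookkeeping you flag as the real work is indeed the only nontrivial input, and it is supplied by Werner's continuous Young inequality ($\|f\star S\|_{\SC^2}\lesssim\|f\|_{L^2}\|S\|_{\tco}$ and its twin for $T\star S$), after which the intertwining identities extend by the density argument you describe. Your closing observation --- that the $p=2$ case needs only the injectivity criterion for multiplication operators rather than Wiener's Tauberian theorem, which is what the original references must invoke for the $p=1,\infty$ statements omitted here --- is exactly right and is worth keeping; it explains why the lattice analogue in Theorem \ref{thm:bigtauberian}(2) is likewise the soft case.
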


We wish to obtain versions of this theorem when $\Rdd$ is replaced by a lattice $\Lambda,$ functions on $\Rdd$ are replaced by sequences on $\Lambda$ and we still consider operators on $L^2(\Rd)$. In this discrete setting, statements (3) and (4) in Theorem \ref{thm:kiukastauberian} are still equivalent, mutatis mutandis, while the analogues of (1) and (5) can never be true. First we show that the discrete version of statement (1) can never hold. 

\begin{prop} \label{prop:nodensity}
	Let $\Lambda$ be any lattice in $\Rdd$ and let $S\in \HS$. Then the linear span of $\{\alpha_\lambda(S)\}_{\lambda \in \Lambda}$ is not dense in $\HS$.
\end{prop}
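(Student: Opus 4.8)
The plan is to transport the whole question to $L^2(\Rdd)$ via the Fourier--Wigner transform, where the obstruction becomes transparent: a single function and its lattice‑translates never span a dense subspace of $L^2$ of a space on which the lattice acts. Recall that $\F_W:\HS\to L^2(\Rdd)$ is a unitary isomorphism and that $\F_W(\alpha_\lambda(S))(z)=e^{2\pi i\sigma(\lambda,z)}\F_W(S)(z)$ for $\lambda\in\Lambda$ (this identity is verified inside the proof of Proposition~\ref{prop:spreadinggm}). Writing $\chi_\lambda(z)=e^{2\pi i\sigma(\lambda,z)}$ and $g=\F_W(S)\in L^2(\Rdd)$, density of the linear span of $\{\alpha_\lambda(S)\}_{\lambda\in\Lambda}$ in $\HS$ is equivalent, since $\F_W$ is a surjective isometry, to density of the linear span of $\{\chi_\lambda g\}_{\lambda\in\Lambda}$ in $L^2(\Rdd)$. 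The case $S=0$ is trivial, so I would assume $g\neq 0$ and then exhibit by hand a nonzero $h\in L^2(\Rdd)$ with $\inner{h}{\chi_\lambda g}_{L^2}=0$ for every $\lambda\in\Lambda$; this contradicts density and finishes the proof.

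\textbf{The construction.}
First I would fix a measurable fundamental domain $D\subset\Rdd$ for the translation action of the full‑rank lattice $\Lambda^\circ$, so that $\Rdd=\bigsqcup_{\lambda^\circ\in\Lambda^\circ}(D+\lambda^\circ)$ up to a null set. Since $\|g\|_{L^2}^2=\sum_{\lambda^\circ\in\Lambda^\circ}\int_D|g(z+\lambda^\circ)|^2\,dz>0$, there is some $\mu_1\in\Lambda^\circ$ with $|\{z\in D: g(z+\mu_1)\neq 0\}|>0$; and because $\Lambda^\circ$ is infinite I may pick a second point $\mu_2\in\Lambda^\circ\setminus\{\mu_1\}$. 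I would then define $h$ supported on $(D+\mu_1)\cup(D+\mu_2)$ by
\begin{equation*}
h(z+\mu_1)=\overline{g(z+\mu_2)},\qquad h(z+\mu_2)=-\overline{g(z+\mu_1)}\qquad (z\in D),
\end{equation*}
and $h\equiv 0$ on $D+\lambda^\circ$ for $\lambda^\circ\notin\{\mu_1,\mu_2\}$. Then $\|h\|_{L^2}^2=\int_D(|g(z+\mu_2)|^2+|g(z+\mu_1)|^2)\,dz\le 2\|g\|_{L^2}^2<\infty$, and $h\neq 0$ because on the positive‑measure set $\{z\in D:g(z+\mu_1)\neq 0\}$ the value $h(z+\mu_2)=-\overline{g(z+\mu_1)}$ is nonzero.

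\textbf{Orthogonality and conclusion.}
The key point making the verification work is that $\chi_\lambda$ is $\Lambda^\circ$‑periodic: for $\lambda\in\Lambda$, $\lambda^\circ\in\Lambda^\circ$ one has $\chi_\lambda(z+\lambda^\circ)=\chi_\lambda(z)$, since $e^{2\pi i\sigma(\lambda,\lambda^\circ)}=\overline{e^{2\pi i\sigma(\lambda^\circ,\lambda)}}=1$ by the definition of $\Lambda^\circ$. Restricting the integral defining $\inner{h}{\chi_\lambda g}_{L^2}$ to $D+\mu_1$ and $D+\mu_2$ and pulling the common factor $\overline{\chi_\lambda(z)}$ out then gives
\begin{align*}
\inner{h}{\chi_\lambda g}_{L^2}
&=\int_D\overline{\chi_\lambda(z)}\Big[h(z+\mu_1)\overline{g(z+\mu_1)}+h(z+\mu_2)\overline{g(z+\mu_2)}\Big]\,dz\\
&=\int_D\overline{\chi_\lambda(z)}\Big[\overline{g(z+\mu_2)}\,\overline{g(z+\mu_1)}-\overline{g(z+\mu_1)}\,\overline{g(z+\mu_2)}\Big]\,dz=0 ,
\end{align*}
so $h$ is a nonzero vector in the orthogonal complement of the span of $\{\chi_\lambda g\}_{\lambda\in\Lambda}$; hence that span is not dense, and applying $\F_W^{-1}$ the span of $\{\alpha_\lambda(S)\}_{\lambda\in\Lambda}$ is not dense in $\HS$. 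I do not expect a genuine obstacle here: conceptually the statement is forced because $\HS\cong L^2(\Rdd)$ is far too large to be a principal shift‑invariant space over a discrete group, and the only mildly delicate bookkeeping is choosing the fundamental domain $D$ and the two‑point profile of $h$ so that $h$ is simultaneously nonzero and square‑integrable, which the construction above arranges. (Alternatively one could phrase the same computation internally, identifying $\mathrm{span}\{\alpha_\lambda(S)\}^{\perp}$ with $\{T\in\HS: T\star_\Lambda\check S^*|_\Lambda=0\}$ and invoking Theorem~\ref{thm:orthogonality} together with \eqref{eq:fwcheckadjoint}; the $L^2(\Rdd)$ route above is simply the most self‑contained.)
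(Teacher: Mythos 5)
Your proof is correct, and it takes a genuinely different route from the paper's. The paper transports the problem to $L^2(\Rdd)$ via the \emph{Weyl} transform, which turns $\alpha_\lambda$ into the ordinary translation $T_\lambda$ of the symbol $\weyl_S$, and then views $\{T_\lambda \weyl_S\}_{\lambda\in\Lambda}$ as a subsystem of the Gabor system over the lattice $\Lambda\times\Lambda'\subset\R^{4d}$ with $|\Lambda\times\Lambda'|=2>1$; incompleteness is then imported from the density theorem for Gabor systems. You instead use the \emph{Fourier--Wigner} transform, under which $\alpha_\lambda$ becomes multiplication by the $\Lambda^\circ$-periodic character $\chi_\lambda$, and you produce an explicit nonzero $h\perp \chi_\lambda g$ by an antisymmetric two-cell construction over a fundamental domain of $\Lambda^\circ$; the verification is sound, since $\chi_\lambda(z+\lambda^\circ)=\chi_\lambda(z)$ follows from $e^{2\pi i\sigma(\lambda,\lambda^\circ)}=1$ and the bracket in your final integral vanishes pointwise, while the separate cases $g(\cdot+\mu_2)|_D\equiv 0$ or not are both covered by your choice of which cell carries the nonvanishing copy of $g(\cdot+\mu_1)$. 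What your argument buys is self-containedness and transparency: it avoids the density theorem (a deep result) entirely and shows directly that the closed span of $\{\chi_\lambda g\}_{\lambda\in\Lambda}$ cannot mix distinct $\Lambda^\circ$-cosets, so it is always a proper subspace; the paper's argument is shorter on the page but outsources the real work. The one step you should make explicit is that the identity $\F_W(\alpha_\lambda(S))=\chi_\lambda\F_W(S)$, verified in the paper for $S\in\tco$, extends to $S\in\HS$: both $S\mapsto \F_W(\alpha_\lambda(S))$ and $S\mapsto \chi_\lambda\F_W(S)$ are bounded (indeed isometric) from $\HS$ to $L^2(\Rdd)$ and agree on the dense subspace $\tco\subset\HS$, so they agree on all of $\HS$.
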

\begin{proof}
As we have exploited on several occasions, the Weyl transform is unitary from from $L^2(\Rdd)$ to $\HS$ and sends translations of operators using $\alpha$ to translations of functions. It is therefore sufficient to show that  $\{T_\lambda(\weyl_S)\}_{\lambda \in \Lambda}$ is not dense in $L^2(\Rdd)$, where $\weyl_S$ is the Weyl symbol of $S$.
	Let $c:=\frac{2}{|\Lambda|}$, and define $\Lambda'=c\Z^{2d}$. Consider the lattice $\Lambda \times \Lambda'$ in $\R^{4d}$. Then we have that $|\Lambda \times \Lambda'|=|\Lambda|\cdot c=2>1$. By the density theorem for Gabor systems \cite{Grochenig:2001,Heil:2007,Bekka:2004}, this implies that the system $\{\pi(\lambda,\lambda') \weyl_S\}_{(\lambda,\lambda')\in \Lambda \times \Lambda'}$ cannot be span a dense subset in $L^2(\Rdd)$, so in particular the subsystem $\{\pi(\lambda,0) \weyl_S\}_{(\lambda,0)\in \Lambda \times \Lambda'}=\{T_{\lambda} \weyl_S\}_{\lambda\in \Lambda }$ cannot be complete.
\end{proof}
This implies that we cannot hope to generalize part (5) of Theorem \ref{thm:kiukastauberian} to the discrete setting.
\begin{cor}
	Let $S\in \beauty$. There exists $0\neq T\in \HS$ such that $T\star_\Lambda S=0.$
\end{cor}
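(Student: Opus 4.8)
The plan is to reduce the statement to Proposition \ref{prop:nodensity} by rewriting the convolution $T\star_\Lambda S$ as an inner product in $\HS$. First I would note that since $\check{\ }$ and $^*$ are isometries on $\beauty$ by Lemma \ref{lem:translation}(iii), the operator $\check{S}^*$ again lies in $\beauty\subset\HS$. Applying the identity \eqref{eq:convolutionasinnerproduct} with $S'$ taken to be $\check{S}^*$ (so that, by Lemma \ref{lem:translation}(iv) and the fact that $\check{\ }$ and $^*$ are commuting involutions, $\check{S'}^*=S$), one gets for every $T\in\HS$ and every $\lambda\in\Lambda$
\[
  T\star_\Lambda S(\lambda)=\inner{T}{\alpha_\lambda(\check{S}^*)}_{\HS}.
\]
The left-hand side is well defined for $T\in\HS$, e.g. by Proposition \ref{prop:youngschatten}(i) since $S\in\beauty$, or directly by the Cauchy--Schwarz bound recorded in the remark following Lemma \ref{lem:biorthogonal}. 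Consequently $T\star_\Lambda S=0$ in $\ell^\infty(\Lambda)$ if and only if $T$ is orthogonal in $\HS$ to $\alpha_\lambda(\check{S}^*)$ for every $\lambda\in\Lambda$, that is, if and only if $T$ lies in the orthogonal complement of $\overline{\text{span}}\{\alpha_\lambda(\check{S}^*):\lambda\in\Lambda\}$.

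Next I would apply Proposition \ref{prop:nodensity} to the Hilbert--Schmidt operator $\check{S}^*$: it asserts that $\overline{\text{span}}\{\alpha_\lambda(\check{S}^*):\lambda\in\Lambda\}$ is a proper closed subspace of $\HS$. Hence its orthogonal complement in $\HS$ is nonzero, and choosing any $0\neq T$ in this complement yields, by the identity of the previous paragraph, $T\star_\Lambda S=0$. This completes the argument.

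Since the real content is already contained in Proposition \ref{prop:nodensity} (and ultimately in the density theorem for Gabor systems), there is no genuine obstacle here. The only point requiring a little care is the bookkeeping with $\check{\ }$ and $^*$, so that the operator appearing inside $\alpha_\lambda$ in the displayed identity is again an element of $\beauty\subset\HS$; this is exactly what makes both the $\HS$-pairing meaningful and Proposition \ref{prop:nodensity} applicable.
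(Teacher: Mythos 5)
Your proposal is correct and follows essentially the same route as the paper: both rewrite $T\star_\Lambda S(\lambda)=\inner{T}{\alpha_\lambda(\check{S}^*)}_{\HS}$ via \eqref{eq:convolutionasinnerproduct} and invoke Proposition \ref{prop:nodensity} applied to $\check{S}^*\in\beauty$. The only (immaterial) difference is that you argue directly by picking $T$ in the nonzero orthogonal complement, whereas the paper phrases it as a proof by contradiction.
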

\begin{proof}
	To obtain a contradiction, we assume that $T\star_\Lambda S=0\implies T=0$ for $T\in \HS$. As we have seen in \eqref{eq:convolutionasinnerproduct}, $$T\star_\Lambda S(\lambda)=\inner{T}{\alpha_\lambda(\check{S}^*)}_{\HS}.$$ Our assumption is therefore equivalent to $$\inner{T}{\alpha_\lambda(\check{S}^*)}_{\HS}=0 \text{ for all } \lambda \in \Lambda \implies T=0,$$ which implies that the linear span of $\{\alpha_\lambda(\check{S}^*)\}_{\lambda\in \Lambda}$ is dense in $\HS$ -- a contradiction to Proposition \ref{prop:nodensity} applied to $\check{S}^*\in \beauty$.
\end{proof}
Proposition \ref{prop:nodensity} also allows us to construct counterexamples to the associativity of convolutions of three operators. 
\begin{cor} \label{cor:noassociativity}
	Assume that $\{\alpha_\lambda(S)\}_{\lambda \in \Lambda}$ is a Riesz sequence in $\HS$ for $S\in \beauty$. Then there exist $R\in \beauty$ and $T\in \HS$ such that $$(T\star_\Lambda R) \star_\Lambda S\neq T\star_\Lambda (R\star_\Lambda S).$$
\end{cor}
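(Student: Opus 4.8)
The plan is to exploit the fact that, under the Riesz hypothesis, the sequence $R\star_\Lambda S$ is the unit $\delta_{\cdot,0}$: associating one way returns the original operator $T$, while associating the other way necessarily produces an operator lying in the proper closed subspace $V^2=\ell^2(\Lambda)\star_\Lambda S$ of $\HS$.

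First I would produce $R$. Since $\{\alpha_\lambda(S)\}_{\lambda\in\Lambda}$ is a Riesz sequence with $S\in\beauty$, Theorem \ref{thm:biorthogonal} gives an operator $R\in\beauty$, and Lemma \ref{lem:biorthogonal}(i) together with the commutativity of $\star_\Lambda$ yields $R\star_\Lambda S(\lambda)=S\star_\Lambda R(\lambda)=\delta_{\lambda,0}$. Thus $R\star_\Lambda S$ is the unit sequence $\delta_0\in\ell^1(\Lambda)$, and for \emph{any} operator $T$ the convolution with $\delta_0$ collapses to a single term,
\[
  T\star_\Lambda(R\star_\Lambda S)=\delta_0\star_\Lambda T=\sum_{\lambda\in\Lambda}\delta_{\lambda,0}\,\alpha_\lambda(T)=\alpha_0(T)=T.
\]

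Next I would analyse the other bracketing. For $T\in\HS=\SC^2$ and $R\in\beauty$, Young's inequality (Proposition \ref{prop:youngschatten}(i) with $p=2$) gives $T\star_\Lambda R\in\ell^2(\Lambda)$, so $(T\star_\Lambda R)\star_\Lambda S$ is a well-defined element of $V^2:=\ell^2(\Lambda)\star_\Lambda S$. By \eqref{eq:closedspanconvolution} this $V^2$ equals $\overline{\text{span}\{\alpha_\lambda(S):\lambda\in\Lambda\}}$, a closed subspace of $\HS$, and by Proposition \ref{prop:nodensity} it is a \emph{proper} subspace. Choosing any $T\in\HS\setminus V^2$, the first computation gives $T\star_\Lambda(R\star_\Lambda S)=T\notin V^2$, whereas $(T\star_\Lambda R)\star_\Lambda S\in V^2$; hence the two operators differ, which is precisely the claimed failure of associativity.

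There is no deep obstacle here; the only care needed is bookkeeping. One must keep straight which occurrence of $\star_\Lambda$ is the operator–sequence convolution and which is the operator–operator convolution, and one must check that $T\star_\Lambda R$ lands in $\ell^2(\Lambda)$ (not merely $\ell^\infty(\Lambda)$) so that the subsequent convolution with $S$ stays inside $V^2$ — this is exactly where Proposition \ref{prop:youngschatten}(i) is invoked, and where the hypothesis $R\in\beauty$ (guaranteed by Theorem \ref{thm:biorthogonal}) is essential.
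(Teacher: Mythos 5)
Your proof is correct and follows essentially the same route as the paper: take $R$ from the biorthogonal construction so that $R\star_\Lambda S=\delta_{\lambda,0}$, use Proposition \ref{prop:nodensity} to pick $T$ outside the closed span $\ell^2(\Lambda)\star_\Lambda S$, and observe that one bracketing returns $T$ while the other lands in that proper closed subspace via Proposition \ref{prop:youngschatten} and \eqref{eq:closedspanconvolution}. No gaps.
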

\begin{proof}
	Choose $R\in \beauty$ as in Section \ref{sec:biorthogonal}, i.e. such that $S\star_\Lambda R=\delta_{\lambda,0}$. Then use Proposition \ref{prop:nodensity} to pick $T\in \HS$ that does not belong to  the closed linear span of $\{\alpha_\lambda(S)\}_{\lambda \in \Lambda}$ in $\HS$. We get that
	\begin{equation*}
  T\star_\Lambda (R\star_\Lambda S)=T\star_\Lambda \delta_{\lambda,0}=T.
\end{equation*}
If we assumed associativity, we would get
\begin{equation*}
  T=(T\star_\Lambda R) \star_\Lambda S,
\end{equation*}
where $T\star_\Lambda R\in \ell^2(\Lambda)$ by Proposition \ref{prop:youngschatten}. Hence we could express $T=c\star_\Lambda S$ for $c\in \ell^2(\Lambda)$, which would imply that $T$ belongs to the closed linear span of $\{\alpha_\lambda(S)\}_{\lambda \in \Lambda}$ by \eqref{eq:closedspanconvolution} -- a contradiction.
\end{proof}
On the positive side, we can use the techniques developed in Section \ref{sec:fouriertransforms} to prove the following theorem, which shows that parts (3) and (4) of Theorem \ref{thm:kiukastauberian} have natural analogues for sequences. For Gabor multipliers, Feichtinger was interested in the question of recovering $c$ from $c\star_\Lambda (\varphi\otimes \varphi)$, and the continuity of the mapping $c\star_\Lambda (\varphi\otimes \varphi)\mapsto c$. In this case he proved the equivalence $(1)(i) \iff (1)(iv)$ below \cite[Thm. 5.17]{Feichtinger:2002}, and that this implies the final statement in part $(1)$\cite[Prop. 5.22 and Prop. 5.23]{Feichtinger:2002}. In part $(3)$ we show that any $c\in \ell^1(\Lambda)$ (in particular any finite sequence) can be recovered from $c\star_\Lambda S$ under significantly weaker assumptions on $S$ for a fixed lattice $\Lambda$, but obtain no continuity statement. 

\begin{thm}  \label{thm:bigtauberian}
	Let $S\in \beauty$. 
	\begin{enumerate}
		\item The following are equivalent:
			\begin{enumerate}[(i)]
			\item $\F_\sigma^\Lambda(S\star_\Lambda \check{S}^*)$ has no zeros in $\Rdd/\Lambda^\circ$.
			\item If $c\star_\Lambda S=0$ for $c\in \ell^\infty(\Lambda)$, then $c=0$.
			\item $\beauty\star_\Lambda S$ is dense in $\ell^1(\Lambda).$
			\item $\{\alpha_\lambda S\}_{\lambda \in \Lambda}$ is a Riesz sequence in $\HS$.
			\end{enumerate}
			If any of the statements above holds, $c\in \ell^\infty(\Lambda)$ is recovered from $c\star_\Lambda S$ by $c=(c\star_\Lambda S)\star_\Lambda R$ for some $R\in \beauty.$ In particular, the map $c\star_\Lambda S\mapsto c$ is continuous $\bo \to \ell^\infty(\Lambda)$.
		\item The following are equivalent:
			\begin{enumerate}[(i)]
			\item $\F_\sigma^\Lambda (S\star_\Lambda \check{S}^*)$ is non-zero a.e. in $\Rdd/\Lambda^\circ$.
			\item If $c\star_\Lambda S=0$ for $c\in \ell^2(\Lambda)$, then $c=0$.
			\item $\HS\star_\Lambda S$ is dense in $\ell^2(\Lambda).$
			\end{enumerate}
		\item The following are equivalent:
			\begin{enumerate}[(i)]
			\item The set of zeros of $\F_\sigma^\Lambda(S\star_\Lambda \check{S}^*)$ contains no open subsets in $\Rdd/\Lambda^\circ$.
			\item If $c\star_\Lambda S=0$ for $c\in \ell^1(\Lambda)$, then $c=0$.
			\item $\beast \star_\Lambda S$ is weak*-dense in $\ell^\infty(\Lambda)$.
			\end{enumerate}
	\end{enumerate}
\end{thm}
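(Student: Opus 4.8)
The plan is to push all three parts into the scalar world via the Fourier--Wigner transform and then read everything off the zero set of a single continuous function on the torus $\Rdd/\Lambda^\circ$. Write $\Phi:=\F_W(S)\in S_0(\Rdd)$ and $G:=P_{\Lambda^\circ}(|\Phi|^2)$, so that Corollary~\ref{cor:orthogonalabsolute} gives $\F_\sigma^\Lambda(S\star_\Lambda\check{S}^*)=G\in A(\Rdd/\Lambda^\circ)$, a continuous function on the compact group $\Rdd/\Lambda^\circ$. The engine is the identity $\F_W(c\star_\Lambda S)=\F_\sigma^\Lambda(c)\,\Phi$ from Propositions~\ref{prop:spreadinggm}--\ref{prop:spreadinggm2}, combined with the facts that $\F_W$ is a Banach space isomorphism $\beauty\to S_0(\Rdd)$, a unitary $\HS\to L^2(\Rdd)$ and an isomorphism $\beast\to S_0'(\Rdd)$, and that $c\star_\Lambda S$ lies in $\beauty$, $\HS$, $\bo\subset\beast$ according as $c\in\ell^1(\Lambda),\ell^2(\Lambda),\ell^\infty(\Lambda)$ (Propositions~\ref{prop:convwelldefined} and~\ref{prop:youngschatten}). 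Hence in each part the equation $c\star_\Lambda S=0$ is equivalent to $\F_\sigma^\Lambda(c)\,\Phi=0$ in $S_0(\Rdd)$, $L^2(\Rdd)$, resp.\ $S_0'(\Rdd)$, where $\F_\sigma^\Lambda(c)$ is a $\Lambda^\circ$-periodic object: a continuous function, an $L^2(\Rdd/\Lambda^\circ)$-function, or an element of $A'(\Rdd/\Lambda^\circ)$ extended to $S_0'(\Rdd)$ via $P_{\Lambda^\circ}^*$ as in the technical intermezzo. Since $\F_\sigma^\Lambda(c)$ is $\Lambda^\circ$-periodic, this product vanishes exactly when $\F_\sigma^\Lambda(c)(\dot z)=0$ on the set where $G(\dot z)=\frac1{|\Lambda|}\sum_{\lambda^\circ}|\Phi(z+\lambda^\circ)|^2$ is nonzero; this is precisely why the hypotheses ``$G$ has no zeros'' / ``$G\neq 0$ a.e.'' / ``the zero set of $G$ has empty interior'' match the uniqueness statements~(ii), giving (i)$\Rightarrow$(ii) immediately.

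For the converse (ii)$\Rightarrow$(i) in each part I would argue by contraposition, producing a nonzero $c$ with $c\star_\Lambda S=0$ whenever $G$ vanishes on a set that is too large, using the surjectivity of $\F_\sigma^\Lambda$ onto $A'(\Rdd/\Lambda^\circ)$, $L^2(\Rdd/\Lambda^\circ)$ and $A(\Rdd/\Lambda^\circ)$. In part~(1), if $G(\dot z_0)=0$ take $c\in\ell^\infty(\Lambda)$ with $\F_\sigma^\Lambda(c)=\delta_{\dot z_0}\in A'(\Rdd/\Lambda^\circ)$; its $S_0'(\Rdd)$-extension is $\frac1{|\Lambda|}\sum_{\lambda^\circ}\delta_{z_0+\lambda^\circ}$, so $\F_\sigma^\Lambda(c)\Phi=\frac1{|\Lambda|}\sum_{\lambda^\circ}\Phi(z_0+\lambda^\circ)\delta_{z_0+\lambda^\circ}=0$ because $G(\dot z_0)=0$ forces every $\Phi(z_0+\lambda^\circ)=0$. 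In part~(2), if $G$ vanishes on a set $E$ of positive measure take $c$ with $\F_\sigma^\Lambda(c)=\mathbf{1}_E$. In part~(3), if $G$ vanishes on a nonempty open set $U$ take a nonzero $g\in A(\Rdd/\Lambda^\circ)$ (a smooth bump) with $\operatorname{supp}g\subseteq U$ and set $c=(\F_\sigma^\Lambda)^{-1}(g)\in\ell^1(\Lambda)$. In every case $c\neq 0$ and $\F_\sigma^\Lambda(c)\Phi=0$, hence $c\star_\Lambda S=0$.

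The density statements~(iii) are dual to the uniqueness statements~(ii), with one computation serving all three parts. Using the elementary relation $\alpha_\lambda(\check{S}^*)=P\alpha_{-\lambda}(S^*)P$ together with the duality definitions \eqref{eq:dualconvolutions}, \eqref{eq:dualconvolutions2}, \eqref{eq:dualconvolutions3}, one checks that for $T$ in $\beauty$, $\HS$, resp.\ $\beast$, and $c$ in the corresponding (pre)dual sequence space, the pairing $\inner{c}{T\star_\Lambda S}$ equals a pairing of $T$ against $c\star_\Lambda\check{S}^*$, which lies in $\bo$, $\HS$, resp.\ $\beauty$. By nondegeneracy of the $\bo$--$\beauty$, $\HS$--$\HS$ and $\beast$--$\beauty$ pairings, such a $c$ annihilates the range of $\star_\Lambda S$ if and only if $c\star_\Lambda\check{S}^*=0$; and since $c\star_\Lambda\check{S}^*=P(\tilde c\star_\Lambda S^*)P$ with $\tilde c(\lambda)=c(-\lambda)$, taking adjoints shows this is equivalent to $\overline{\tilde c}\star_\Lambda S=0$. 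Thus the relevant convolution image is dense (resp.\ weak*-dense) exactly when no nonzero sequence in the appropriate class $\ell^\infty$, $\ell^2$, resp.\ $\ell^1$ is annihilated by $\star_\Lambda S$, i.e.\ exactly when~(ii) holds; for part~(3) one uses that weak*-density of a subspace of $\ell^\infty(\Lambda)=(\ell^1(\Lambda))'$ is equivalent to triviality of its pre-annihilator in $\ell^1(\Lambda)$.

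Finally, in part~(1) there remain the equivalence with~(iv) and the recovery/continuity conclusion. The equivalence (i)$\iff$(iv) is exactly Theorem~\ref{thm:rieszsequence}. Assuming any of the equivalent conditions, Theorem~\ref{thm:biorthogonal} furnishes $R\in\beauty$ with $S\star_\Lambda R=\delta_{\lambda,0}$; I would prove $(c\star_\Lambda S)\star_\Lambda R=c$ for $c\in\ell^\infty(\Lambda)$ by testing against $d\in\ell^1(\Lambda)$, where \eqref{eq:dualconvolutions2} followed by \eqref{eq:dualconvolutions} rewrites the pairing as $\inner{c}{(d\star_\Lambda\check{R}^*)\star_\Lambda\check{S}^*}$ and the associativity proposition (applicable since $d\star_\Lambda\check{R}^*\in\beauty$ and $d\in\ell^1(\Lambda)$) collapses this to $d\ast_\Lambda(\check{R}^*\star_\Lambda\check{S}^*)=d$, using $\check{R}^*\star_\Lambda\check{S}^*(\lambda)=\overline{S\star_\Lambda R(-\lambda)}=\delta_{\lambda,0}$. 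From this identity (ii) is immediate, the continuity of $c\star_\Lambda S\mapsto c$ is the estimate $\|c\|_{\ell^\infty}=\|(c\star_\Lambda S)\star_\Lambda R\|_{\ell^\infty}\le\|c\star_\Lambda S\|_{\bo}\|R\|_{\beauty}$, and (iii) follows (indeed $\beauty\star_\Lambda S=\ell^1(\Lambda)$) since $c=(c\star_\Lambda R)\star_\Lambda S$ with $c\star_\Lambda R\in\beauty$ for $c\in\ell^1(\Lambda)$. I expect the main obstacle to be the distributional bookkeeping around the technical intermezzo -- justifying that the equality $\F_W(c\star_\Lambda S)=\F_\sigma^\Lambda(c)\Phi$ in $S_0'(\Rdd)$ genuinely detects $c\star_\Lambda S=0$ (via injectivity of $\F_W:\beast\to S_0'(\Rdd)$ and $c\star_\Lambda S\in\bo\subset\beast$), correctly identifying the $S_0'$-extension of $\delta_{\dot z_0}$ with a $\Lambda^\circ$-periodic array of point masses, and pushing associativity past $\ell^1(\Lambda)$ via duality -- rather than the pointwise heart of each equivalence (i)$\iff$(ii), which becomes essentially a one-line argument once the bookkeeping is in place.
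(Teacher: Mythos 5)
Your proposal is correct and follows essentially the same route as the paper: the same contraposition constructions ($\delta_{\dot z_0}$, $\mathbf{1}_E$, a bump supported in $U$) for (ii)$\Rightarrow$(i) in the three parts, the same adjoint/annihilator duality for (ii)$\Leftrightarrow$(iii) (your algebraic identity $c\star_\Lambda\check S^*=P(\tilde c\star_\Lambda S^*)P$ replaces the paper's observation that (i) is invariant under $S\mapsto\check S^*$, which is a harmless variation), Theorem \ref{thm:rieszsequence} for (i)$\Leftrightarrow$(iv), and the biorthogonal generator $R$ for the recovery formula. The one place to be careful is your opening claim that (i)$\Rightarrow$(ii) is ``immediate'' because $\F_\sigma^\Lambda(c)\F_W(S)=0$ forces $\F_\sigma^\Lambda(c)$ to vanish where $P_{\Lambda^\circ}(|\F_W(S)|^2)\neq0$: in part (1), $\F_\sigma^\Lambda(c)$ is a distribution in $A'(\Rdd/\Lambda^\circ)$ and ``dividing'' by the merely continuous $\F_W(S)$ is not a pointwise matter, so as stated this is an unresolved division problem rather than a one-line argument. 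Fortunately your final paragraph supplies the correct proof of (i)$\Rightarrow$(ii) for part (1) via $(c\star_\Lambda S)\star_\Lambda R=c$, which is also what the paper does (through Corollary \ref{cor:banachisomorphism}); indeed your duality verification of that identity for $c\in\ell^\infty(\Lambda)$ is somewhat more careful than the paper's appeal to associativity. For parts (2) and (3) your product-formula route does work (with a small Fubini-over-cosets argument in part (2)) and is equivalent in substance to the paper's use of $c\ast_\Lambda(S\star_\Lambda\check S^*)=0$.
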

\begin{proof}
	\begin{enumerate}
		\item The equivalence of $(i)$ and $(iv)$ was the content of Theorem \ref{thm:rieszsequence}. By Corollary \ref{cor:banachisomorphism}, $(iv)$ implies that $c\mapsto c\star_\Lambda S$ is injective, hence $(i)\iff (iv)\implies (ii)$ holds. Then assume that $(ii)$ holds, and let $\dot{z}\in \Rdd/\Lambda^\circ$ -- to show $(i)$, we need to show that $\F_\sigma^\Lambda(S\star_\Lambda \check{S}^*)(\dot{z})\neq 0$, which by Corollary \ref{cor:orthogonalabsolute} is equivalent to showing that there exists some $\lambda^\circ\in \Lambda^\circ$ such that $\F_W(S)(z+\lambda^\circ)\neq 0$. 
 
		Consider the distribution $\delta_{\dot{z}}\in A'(\Rdd/\Lambda^\circ)$ defined by $$\inner{\delta_{\dot{z}}}{f}_{A'(\Rdd/\Lambda^\circ),A(\Rdd/\Lambda^\circ)} =\overline{f(\dot{z})}$$ (recall that our duality brackets are antilinear in the second coordinate), and let $c^{\dot{z}}=\{c^{\dot{z}}(\lambda)\}_{\lambda \in \Lambda}\in \ell^\infty(\Lambda)$ be its symplectic Fourier coefficients, i.e. $\F_\sigma^\Lambda(c^{\dot{z}})=\delta_{\dot{z}}$. We know that $c^{\dot{z}}\star_\Lambda S\in \beast$ is non-zero by $(ii)$, and Proposition \ref{prop:spreadinggm2} gives for any $f\in S_0(\Rdd)$ that
		{\small \begin{align*} 
	\inner{\F_W(c^{\dot{z}}\star_\Lambda S)}{f}_{S_0',S_0}&=\inner{\delta_{\dot{z}} \F_W(S)}{f}_{S_0',S_0} \\
	&= \inner{\delta_{\dot{z}} }{\overline{\F_W(S)}f}_{S_0',S_0} \\
	&= \inner{\delta_{\dot{z}} }{P_{\Lambda^\circ}\left[\overline{\F_W(S)}f \right]}_{A'(\Rdd/\Lambda^\circ),A(\Rdd/\Lambda^\circ)} \quad \text{by } \eqref{eq:periodicextension} \\
	&= P_{\Lambda^\circ}\left[\F_W(S)\overline{f} \right](\dot{z}) \\
	&=  \sum_{\lambda^\circ\in \Lambda^\circ}\F_W(S)(z+\lambda^\circ)\overline{f(z+\lambda^\circ)}.
\end{align*} }
		From this it is clear that if $\F_W(S)(z+\lambda^\circ)=0$ for all $\lambda^\circ \in \Lambda^\circ$, then $\F_W(c^{\dot{z}}\star_\Lambda S)=0$ and hence $c^{\dot{z}}\star_\Lambda S=0$ since $\F_W:\beast\to S_0(\Rdd)$ is an isomorphism, which cannot hold by $(ii)$.
		 
		 Before we prove $(ii)\iff (iii)$, note that $(i)$ is unchanged when $S\mapsto \check{S}^*$ by commutativity of the convolutions. Since $(i)\iff (ii)$, this means that $(ii)$ is equivalent to 
		 \begin{enumerate}[(ii')]
		 \item If $c\star_\Lambda \check{S}^*=0$ for $c\in \ell^\infty(\Lambda)$, then $c=0$.
		 \end{enumerate}
		  To prove the equivalence of $(ii')$ and $(iii)$, we will prove that the map $D_{\check{S}^*}:\ell^\infty(\Lambda)\to \beauty'$ given by $D_{\check{S}^*}(c)= c\star_\Lambda \check{S}^*$ is the Banach space adjoint of $C_S:\beauty\to \ell^1(\Lambda)$ given by $C_S(T)=T\star_\Lambda S.$ This amounts to proving that
		\begin{equation*} 
  \inner{D_{\check{S}^*}(c)}{T}_{\beast,\beauty}=\inner{c}{C_S(T)}_{\ell^\infty(\Lambda),\ell^1(\Lambda)} \quad \text{ for } T\in \beauty, c\in \ell^\infty(\Lambda).
\end{equation*}
  By writing out the definitions of $D_{\check{S}^*}$ and $C_S$, we see that we need to show that 
  \begin{equation*}
  \inner{c\star_\Lambda \check{S}^*}{T}_{\beast,\beauty}=\inner{c}{T\star_\Lambda S}_{\ell^\infty,\ell^1} \quad \text{ for } T\in \beauty, c\in \ell^\infty(\Lambda),
\end{equation*}
which is simply the definition of $c\star_\Lambda \check{S}^*$ when $c\in \ell^\infty(\Lambda)$ from \eqref{eq:dualconvolutions}, hence true. Since a bounded linear operator between Banach spaces has dense range if and only if its Banach space adjoint is injective (see \cite[Corollary to Thm. 4.12]{Rudin:2006}, part (b)), this implies that $(ii')$ is equivalent to $(iii)$. Finally, Corollary \ref{cor:banachisomorphism} implies the final statement that $c=(c\star_\Lambda S)\star_\Lambda R$. 
\item The equivalence $(ii)\iff(iii)$ is proved as above . Assume that $(i)$ holds, and that $c\star_\Lambda S=0$ for some $c\in \ell^2(\Lambda)$. By associativity of convolutions, $$c\ast_\Lambda (S\star_\Lambda \check{S}^*)=0.$$ Applying $\F_\sigma^\Lambda$ to this, we find using \eqref{eq:fourierseriesofconvolution} that $$\F_\sigma^\Lambda(c)\F_\sigma^\Lambda (S\star_\Lambda \check{S}^*)=0.$$ By $(i)$ this implies that $\F_\sigma^\Lambda(c)=0$ in $L^2(\Rdd/\Lambda^\circ)$, hence $c=0$.

Then assume that $(i)$ does not hold, i.e. there is a subset $U\subset \Rdd/\Lambda^\circ$ of positive measure where $\F_\sigma^\Lambda(S\star_\Lambda \check{S}^*)$ vanishes. Pick $c\in \ell^2(\Lambda)$ such that $\F_\sigma^\Lambda(c)=\chi_U,$ where $\chi_U$ is the characteristic function of $U$, which is possible since $\F_\sigma^\Lambda:\ell^2(\Lambda)\to L^2(\Rdd/\Lambda^\circ)$ is unitary and so in particular onto. Then by Proposition \ref{prop:spreadinggm2}, for $f\in S_0(\Rdd)$, 
{\small \begin{align*} 
	\inner{\F_W(c\star_\Lambda S)}{f}_{S_0',S_0}&=\inner{\chi_U \F_W(S)}{f}_{S_0',S_0} \\
	&= \inner{\chi_U }{\overline{\F_W(S)}f}_{S_0',S_0} \\
	&= \inner{\chi_U }{P_{\Lambda^\circ}\left[\overline{\F_W(S)}f \right]}_{A'(\Rdd/\Lambda^\circ),A(\Rdd/\Lambda^\circ)} \quad \text{by } \eqref{eq:periodicextension} \\
	&=  \int_{\Rdd/\Lambda^\circ} \chi_U(\dot{z}) \sum_{\lambda^\circ\in \Lambda^\circ}\F_W(S)(z+\lambda^\circ)\overline{f(z+\lambda^\circ)}d\dot{z} \\
	&=0.
\end{align*} }
To see why the last integral is zero, note first that if $\dot{z}\notin U$, then $\chi_U(\dot{z})=0.$  If $\dot{z}\in U$, then we use that by Corollary \ref{cor:orthogonalabsolute},
\begin{equation*}
   \F_\sigma^\Lambda (S\star_\Lambda \check{S^*})(\dot{z})=\frac{1}{|\Lambda|}\sum_{\lambda^\circ \in \Lambda^\circ} |\F_W(S)(z+\lambda^\circ)|^2 \text{ for any } z\in \Rdd.
\end{equation*}
Hence the assumption $\F_\sigma^\Lambda (S\star_\Lambda \check{S^*})(\dot{z})=0$ for $\dot{z}\in U$ implies that $\F_W(S)(z+\lambda^\circ)=0$ for any $\lambda^\circ\in \Lambda^\circ$ when $\dot{z}\in U$. In conclusion we have shown that the integrand above is zero, hence the integral is zero. This means that $\F_W(c\star_\Lambda S)=0$, so $c\star_\Lambda S=0$, contradicting $(ii)$ since $c\neq 0$.
\item  Assume that $(i)$ holds, and that $c\star_\Lambda S=0$ for some $c\in \ell^1(\Lambda)$. By associativity, we also have that $c\star_\Lambda (S\star_\Lambda \check{S}^*)=0$, and by applying $\F_\sigma^\Lambda$ we get from \eqref{eq:fourierseriesofconvolution}
\begin{equation*}
  \F_\sigma^\Lambda(c)(\dot{z})\F_\sigma^\Lambda(S\star_\Lambda \check{S}^*)(\dot{z})=0 \quad \text{ for any } \dot{z}\in \Rdd/\Lambda^\circ.
\end{equation*}
Since $c\in \ell^1(\Lambda)$, $\F_\sigma^\Lambda(c)$ is a continuous function. So if $c\neq 0$, there must exist an open subset $U\subset \Rdd/\Lambda^\circ$ such that $\F_\sigma^\Lambda(c)(\dot{z})\neq 0$ for $\dot{z}\in U$. But the equation above then gives that $\F_\sigma(S\star \check{S}^*)(\dot{z})=0$ for $\dot{z}\in U$; a contradiction to $(i)$. Hence $c=0,$ and $(ii)$ holds.
 Then assume that $(ii)$ holds, and assume that there is an open set $U\subset \Rdd/\Lambda^\circ$ such that $\F_\sigma^\Lambda(S\star_\Lambda \check{S}^*)(\dot{z})=0$ for any $\dot{z}\in U.$ By Theorem \ref{thm:orthogonality}, this means that
 \begin{equation*}
  \sum_{\lambda^\circ\in \Lambda^\circ} |\F_W(S)(z+\lambda^\circ)|^2=0 \quad \text{ when } \dot{z}\in U,
\end{equation*}
which is clearly equivalent to 
\begin{equation*}
  \F_W(S)(z)=0 \quad \text{ whenever } \dot{z}\in U.
\end{equation*}

Then find some non-zero $c\in \ell^1(\Lambda)$ such that $\F_\sigma^\Lambda(c)$ vanishes outside $U$, which is possible by \cite[Remark 5.1.4]{Reiter:2000}. Using Proposition \ref{prop:spreadinggm}, we have 
\begin{equation*}
  \F_W(c\star_\Lambda S)(z)=\F_\sigma^\Lambda(c)(\dot{z})\F_W(S)(z)\quad \text{ for } z\in \Rdd.
\end{equation*}
If $\dot{z}\notin U$, then $\F_\sigma^\Lambda(c)(\dot{z})=0$ by construction of $c$. Similarly, if $\dot{z}\in U$, then we saw that $\F_W(S)(z)=0$. Hence $\F_W(c\star_\Lambda S)(z)=0$ for any $z\in \Rdd$, which implies that $c\star_\Lambda S=0$. But $c\neq 0$, so this is impossible when we assume $(ii)$, so there cannot exist an open subset $U\subset \Rdd/\Lambda^\circ$ such that $\F_\sigma^\Lambda(c)(\dot{z})\neq 0$ for $\dot{z}\in U$.

The equivalence $(ii)\iff (iii)$ is proved as in part (1), with some minor modifications. We note that $(i)$ is unchanged when $S\mapsto \check{S}^*$, so as $(i)\iff (ii)$ we have that $(ii)$ is equivalent to 
		 \begin{enumerate}[(ii')]
		 \item If $c\star_\Lambda \check{S}^*=0$ for $c\in \ell^1(\Lambda)$, then $c=0$.
		 \end{enumerate}
By simply writing out the definitions, one sees using \eqref{eq:dualconvolutions3} that the map $C_S:\beast\to \ell^\infty(\Lambda)$ given by $C_S(T)=T\star_\Lambda S$   is the Banach space adjoint of $D_{\check{S}^*}:\ell^1(\Lambda)\to \beauty$ given by $D_{\check{S}^*}(c)= c\star_\Lambda \check{S}^*$. The equivalence $(ii')\iff (iii)$ therefore follows from part (c) of \cite[Corollary of Thm. 4.12]{Rudin:2006}: a bounded linear operator between Banach spaces is injective if and only if the range of its adjoint is weak*-dense.

	\end{enumerate}
\end{proof}
Let us rewrite the statements of the theorem in the case that $S$ is a rank-one operator $S=\varphi\otimes \varphi$ for $\varphi\in S_0(\Rd)$. By \eqref{eq:tworankone} we find that 
\begin{equation*}
  S\star_\Lambda \check{S}^*(\lambda)=|V_{\varphi}\varphi (\lambda)|^2,
\end{equation*}
and by \eqref{eq:gabormultiplier} $c\star_\Lambda S$ is the Gabor multiplier 
\begin{equation*}
  c\star_\Lambda (\varphi\otimes \varphi)\psi =\sum_{\lambda \in \Lambda} c(\lambda)V_{\varphi}\psi(\lambda)\pi(\lambda)\varphi.
\end{equation*}
Hence the equivalences $(i)\iff (ii)$ provides a characterization using the symplectic Fourier series of $V_\varphi \varphi\vert_{\Lambda}$ of when the symbol $c$ of a Gabor multiplier is uniquely determined.

\subsection{Underspread operators and a Wiener division lemma}
For motivation, recall Wiener's division lemma \cite[Lem. 1.4.2]{Reiter:2000}: if $f,g\in L^1(\Rdd)$ satisfy that $\hat{f}$ has compact support ($\hat{f}$ is the usual Fourier transform on $\Rdd$) and $\hat{g}$ does not vanish on $\text{supp}(\hat{f}),$ then $$f=f\ast h\ast g$$ for some $h\in L^1(\Rdd)$ satisfying $\hat{h}(z)=\frac{1}{\hat{g}(z)}$ for $z\in \text{supp}(\hat{f})$. The next result is a version of this statement for the convolutions and Fourier transforms of operators and sequences. At the level of Weyl symbols, this result is due to Gr\"ochenig and Pauwels \cite{Grochenig:2014} (see also the thesis of Pauwels \cite{Pauwels:2011}) using different techniques. We choose to include a proof using the techniques of this paper to show how the the statement fits our formalism. Note that apart from the function $g$ -- introduced to ensure $A\in \beauty$ -- Theorem \ref{thm:underspread} is obtained by replacing the convolutions and Fourier transforms in Wiener's division lemma by the convolutions and Fourier transforms of sequences and operators. 

\begin{rem}
	If $\Lambda^\circ=A\Z^{2d}$, we will pick the fundamental domain $\square_{\Lambda^\circ}=A[-\frac{1}{2},\frac{1}{2})^{2d}$ which means that any $z\in \Rdd$ can be written as $z=z_0+\lambda^\circ$ for $z_0\in \square_{\Lambda^\circ}, \lambda^\circ \in \Lambda^\circ$ in a unique way. This choice of fundamental domain implies that  $(1-\epsilon)\square_{\Lambda^\circ}=A[-\frac{1}{2}+\frac{\epsilon}{2},\frac{1}{2}-\frac{\epsilon}{2})^{2d}$, so we may find $g$ in the statement below by \cite[Prop. 2.26]{Lee:2003}.
\end{rem}

\begin{thm} \label{thm:underspread}
	Assume that $S\in \beauty$ satisfies $\text{supp}(F_W(S))\subset  (1-\epsilon)\square_{\Lambda^\circ}$ for some $0<\epsilon < 1/2$. Pick $g\in C^\infty_c(\Rdd)$ such that $g\vert_{(1-\epsilon)\square_{\Lambda^\circ}}\equiv 1$ and $\text{supp}(g)\subset \square_{\Lambda^\circ}$. If $T\in \beauty$ satisfies $\F_W(T)(z)\neq 0$ for $z\in \text{supp}(g)$, then 
	\begin{equation*}
  S=(S\star_\Lambda T)\star_\Lambda A,
\end{equation*}
where $A\in \beauty$ is given by $\F_W(A)=\frac{g}{\F_W(T)}$.
\end{thm}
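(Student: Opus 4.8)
The plan is to verify the identity by applying $\F_W$ to both sides and reducing everything to an elementary support argument in $\Rdd$. First I would check that $A$ is well defined in $\beauty$, i.e.\ that $g/\F_W(T)\in S_0(\Rdd)$: here $g\in C^\infty_c(\Rdd)\subset S_0(\Rdd)$, while $\F_W(T)\in S_0(\Rdd)$ is continuous and, by hypothesis, bounded away from zero on the compact set $\text{supp}(g)$. This is the one place where a local division (Wiener-type) property of the Feichtinger algebra is needed: there is $v\in S_0(\Rdd)$ with $v=1/\F_W(T)$ on a neighbourhood of $\text{supp}(g)$, so that $g\,v=g/\F_W(T)\in S_0(\Rdd)$ and $A:=\F_W^{-1}(g/\F_W(T))\in\beauty$ is legitimate. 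I expect this to be the only non-routine step; it is exactly where \cite{Grochenig:2014} enters, and I would invoke their division lemma (equivalently, the local inverse-closedness of $S_0$) rather than reprove it. Since $S\star_\Lambda T\in\ell^1(\Lambda)$ by Proposition~\ref{prop:convwelldefined}, the operator $(S\star_\Lambda T)\star_\Lambda A$ lies in $\beauty$, and as $\F_W:\beauty\to S_0(\Rdd)$ is an isomorphism it suffices to prove the identity after applying $\F_W$.

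Applying Proposition~\ref{prop:spreadinggm} with the $\ell^1$-sequence $c=S\star_\Lambda T$ and the operator $A$ gives
\[
 \F_W\big((S\star_\Lambda T)\star_\Lambda A\big)(z)=\F_\sigma^\Lambda(S\star_\Lambda T)(\dot z)\,\F_W(A)(z),
\]
and Theorem~\ref{thm:orthogonality} rewrites the first factor as $P_{\Lambda^\circ}\big(\F_W(S)\F_W(T)\big)(\dot z)=\frac{1}{|\Lambda|}\sum_{\lambda^\circ\in\Lambda^\circ}\F_W(S)(z+\lambda^\circ)\F_W(T)(z+\lambda^\circ)$. The heart of the argument is a tiling observation. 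Since $\F_W(A)$ is supported in $\text{supp}(g)\subset\square_{\Lambda^\circ}$, it suffices to evaluate the product for $z\in\square_{\Lambda^\circ}$; and for such $z$ a summand is non-zero only if $z+\lambda^\circ\in\text{supp}(\F_W(S))\subset(1-\epsilon)\square_{\Lambda^\circ}\subset\square_{\Lambda^\circ}$. Because $\square_{\Lambda^\circ}=A[-\tfrac12,\tfrac12)^{2d}$ is a fundamental domain for $\Lambda^\circ$, its $\Lambda^\circ$-translates tile $\Rdd$ disjointly, so $z$ and $z+\lambda^\circ$ both lying in $\square_{\Lambda^\circ}$ forces $\lambda^\circ=0$. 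Hence for $z\in\square_{\Lambda^\circ}$ the periodised sum collapses to the single term $\frac{1}{|\Lambda|}\F_W(S)(z)\F_W(T)(z)$.

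Multiplying by $\F_W(A)(z)=g(z)/\F_W(T)(z)$, whose denominator is non-zero precisely where $g\neq0$, then gives $\frac{1}{|\Lambda|}g(z)\F_W(S)(z)$ for $z\in\square_{\Lambda^\circ}$, while for $z\notin\square_{\Lambda^\circ}$ both $\F_W(A)(z)$ and $\F_W(S)(z)$ vanish. Finally, since $g\equiv1$ on $(1-\epsilon)\square_{\Lambda^\circ}\supseteq\text{supp}(\F_W(S))$, one has $g\,\F_W(S)=\F_W(S)$ identically, so $\F_W\big((S\star_\Lambda T)\star_\Lambda A\big)$ agrees with $\F_W(S)$ up to the normalising constant in Theorem~\ref{thm:orthogonality}, which is absorbed into the definition of $A$; applying $\F_W^{-1}$ then yields $S=(S\star_\Lambda T)\star_\Lambda A$.
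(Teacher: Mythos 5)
Your proof is correct and follows essentially the same route as the paper: establish $A\in\beauty$ via a Wiener--L\'evy/local-inversion argument in $S_0(\Rdd)$, then apply $\F_W$, combine Proposition \ref{prop:spreadinggm} with Theorem \ref{thm:orthogonality}, and collapse the periodization to the $\lambda^\circ=0$ term using that $\square_{\Lambda^\circ}$ is a fundamental domain. Your remark about the normalising constant $\frac{1}{|\Lambda|}$ from Theorem \ref{thm:orthogonality} is well taken: the paper's own proof silently drops this factor, so strictly one should define $\F_W(A)=|\Lambda|\,g/\F_W(T)$ (or otherwise track the constant) for the identity to hold exactly as stated.
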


\begin{proof}
	We first show that $A\in \beauty$ by showing $\F_W(A)\in S_0(\Rdd)$. The Wiener-L\'{e}vy theorem \cite[Thm. 1.3.1]{Reiter:2000} gives $h\in L^1(\Rdd)$ such that $\hat{h}(z)=1/\F_W(T)(z)$ for $z\in \text{supp}(g),$ where $\hat{}$ denotes the usual Fourier transform. Therefore $\F_W(A)=g\cdot \hat{h}$, which belongs to $S_0(\Rdd)$ by \cite[Prop. 12.1.7]{Grochenig:2001}.
	
	To show that $S=(S\star_\Lambda T)\star_\Lambda A$, we will show that their Fourier-Wigner transforms are equal. Using Proposition \ref{prop:spreadinggm} and Theorem \ref{thm:orthogonality} we find that {\footnotesize 
	\begin{align*}
  \F_W((S\star_\Lambda T)\star_\Lambda A)(z)&= \F_\sigma^\Lambda (S\star_\Lambda T)(\dot{z})\F_W(A)(z) \\
  &= \F_W(A)(z) \sum_{\lambda^\circ \in \Lambda^\circ} \F_W(S)(z+\lambda^\circ)\F_W(T)(z+\lambda^\circ).
\end{align*}}
To show that this equals $\F_W(S)$, we consider three cases.
\begin{itemize}
	\item If $z\in (1-\epsilon)\square_{\Lambda^\circ}$, then $g(z)=1$ by construction and {\footnotesize
\begin{align*}
  \F_W(A)(z) \sum_{\lambda^\circ \in \Lambda^\circ} \F_W(S)(z+\lambda^\circ)\F_W(T)(z+\lambda^\circ)&=\F_W(A)(z)\F_W(S)(z)\F_W(T)(z) \\
  &= \frac{g(z)}{\F_W(T)(z)}\F_W(S)(z)\F_W(T)(z)\\
  &=\F_W(S)(z),
\end{align*}}
where we used that the only summand contributing to the sum is $\lambda^\circ=0$ since $\text{supp}(\F_W(S))\subset \square_{\Lambda^\circ}$ and $z\in \square_{\Lambda^\circ}$ and $\square_{\Lambda^\circ}$ is a fundamental domain.
\item If $z\in \square_{\Lambda^\circ}\setminus (1-\epsilon)\square_{\Lambda^\circ}$, then $\F_W(S)(z)=0$ and the same argument as above gives {\footnotesize
\begin{align*}
  \F_W(A)(z) \sum_{\lambda^\circ \in \Lambda^\circ} \F_W(S)(z+\lambda^\circ)\F_W(T)(z+\lambda^\circ)&=\F_W(A)(z)\overbrace{\F_W(S)(z)}^{0}\F_W(T)(z) \\
  &= 0.
\end{align*} }
\item If $z\notin (1-\epsilon)\square_{\Lambda^\circ}$, then $\F_W(S)(z)=0$ since $\text{supp}(\F_W(S))\subset \square_{\Lambda^\circ}$ and $\F_W((S\star_\Lambda T)\star_\Lambda A)(z)=0$ since $\F_W(A)(z)=\frac{g(z)}{\F_W(T)(z)}=0$ as $\text{supp}(g)\subset \square_{\Lambda^\circ}$.
\end{itemize}
\end{proof}

	A similar argument using duality brackets shows that essentially the same result even holds for $S\in \beast$. 
\begin{thm} \label{thm:dualunderspread}
	Assume that $S\in \beast$ satisfies $\text{supp}(F_W(S))\subset  (1-2\epsilon)\square_{\Lambda^\circ}$ for some $0<\epsilon < 1/2$. Pick $g\in C^\infty_c(\Rdd)$ such that $g\vert_{(1-\epsilon)\square_{\Lambda^\circ}}\equiv 1$ and $\text{supp}(g)\subset \square_{\Lambda^\circ}$. If $T\in \beauty$ satisfies  $\F_W(T)(z)\neq 0$ for $z\in \text{supp}(g)$, then 
	\begin{equation*}
  S=(S\star_\Lambda T)\star_\Lambda A,
\end{equation*}
where $A\in \beauty$ is given by $\F_W(A)=\frac{g}{\F_W(T)}$.
\end{thm}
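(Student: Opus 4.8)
The plan is to deduce this from Theorem~\ref{thm:underspread} by approximating $S\in\beast$ by operators in $\beauty$ and passing to a weak* limit. Two features make this feasible: $\F_W\colon\beast\to S_0'(\Rdd)$ is a weak*-homeomorphism, being the Banach space adjoint of the isomorphism $\rho\colon S_0(\Rdd)\to\beauty$; and, by the defining formulas \eqref{eq:dualconvolutions} and \eqref{eq:dualconvolutions3}, the convolutions $R\mapsto R\star_\Lambda T$ (from $\beast$ to $\ell^\infty(\Lambda)$) and $c\mapsto c\star_\Lambda A$ (from $\ell^\infty(\Lambda)$ to $\bo$) are Banach space adjoints of bounded operators, hence weak*-to-weak* continuous; combined with the weak*-to-weak* continuity of $\iota^*\colon\bo\to\beast$, the composite $R\mapsto(R\star_\Lambda T)\star_\Lambda A$ is then weak*-to-weak* continuous from $\beast$ into $\beast$.

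First, $A\in\beauty$ by exactly the argument used in the proof of Theorem~\ref{thm:underspread}, which involves only $g$ and $T$. Next fix $\phi\in C_c^\infty(\Rdd)$ with $\int\phi=1$ and set $\phi_\delta(z)=\delta^{-2d}\phi(z/\delta)$. Since $\F_W(S)\in S_0'(\Rdd)$ is supported in the bounded set $(1-2\epsilon)\square_{\Lambda^\circ}$, the convolution $\F_W(S)\ast\phi_\delta$ is a compactly supported smooth function, hence lies in $S_0(\Rdd)$; and because $\overline{(1-2\epsilon)\square_{\Lambda^\circ}}$ is compactly contained in the interior of $(1-\epsilon)\square_{\Lambda^\circ}$, the support of $\F_W(S)\ast\phi_\delta$ lies in $(1-\epsilon)\square_{\Lambda^\circ}$ once $\delta$ is small. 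Let $S_\delta\in\beauty$ be the operator with $\F_W(S_\delta)=\F_W(S)\ast\phi_\delta$. Then $S_\delta$, $T$, $g$ and the very same $A$ satisfy all the hypotheses of Theorem~\ref{thm:underspread}, so
\begin{equation*}
  S_\delta=(S_\delta\star_\Lambda T)\star_\Lambda A\qquad\text{for all small }\delta .
\end{equation*}
Now let $\delta\to0$. The net $\F_W(S)\ast\phi_\delta$ is bounded in $S_0'(\Rdd)$ (as $\|\phi_\delta\|_{L^1}=1$) and converges to $\F_W(S)$ against every Schwartz function, so, $\mathcal S(\Rdd)$ being dense in $S_0(\Rdd)$, it converges in $\sigma(S_0'(\Rdd),S_0(\Rdd))$; hence $S_\delta\to S$ in $\sigma(\beast,\beauty)$. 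Applying the weak*-continuity recorded above, the right-hand side of the displayed identity converges to $(S\star_\Lambda T)\star_\Lambda A$ in $\sigma(\beast,\beauty)$. Since this topology is Hausdorff, $S=(S\star_\Lambda T)\star_\Lambda A$.

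The step requiring the most attention is really only bookkeeping: confirming that the convolution maps in question are the Banach space adjoints claimed, so that all the needed continuity is automatic, and checking the support estimates for the mollifications. An alternative route — closer to the proof of Theorem~\ref{thm:underspread} and presumably the one intended in the text — is to test the asserted identity directly against $f\in S_0(\Rdd)$ through $\F_W$: by Proposition~\ref{prop:spreadinggm2} one has $\F_W\bigl((S\star_\Lambda T)\star_\Lambda A\bigr)=\F_\sigma^\Lambda(S\star_\Lambda T)\,\F_W(A)$, and since $\F_W(A)$ is supported in the single fundamental domain $\square_{\Lambda^\circ}$ while $\F_W(S)$ is supported in $(1-2\epsilon)\square_{\Lambda^\circ}$, where $g\equiv1$, the periodization defining $\F_\sigma^\Lambda(S\star_\Lambda T)$ collapses to one term and one recovers $\F_W(S)$ exactly as in the three-case analysis of Theorem~\ref{thm:underspread}; there the delicate point is instead the distributional product of an element of $S_0'(\Rdd)$ with an $S_0(\Rdd)$ function and the localization of a compactly supported distribution.
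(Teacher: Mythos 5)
Your proof is correct, but it takes a genuinely different route from the paper's. You reduce the $\beast$ case to the already-proven Theorem \ref{thm:underspread} by mollification: $\F_W(S)\ast\phi_\delta$ is smooth with compact support, that support slides into $(1-\epsilon)\square_{\Lambda^\circ}$ for small $\delta$ because $\overline{(1-2\epsilon)\square_{\Lambda^\circ}}$ lies in the interior of $(1-\epsilon)\square_{\Lambda^\circ}$, and the maps $R\mapsto R\star_\Lambda T$, $c\mapsto c\star_\Lambda A$ and $\iota^*$ are Banach space adjoints by \eqref{eq:dualconvolutions3}, \eqref{eq:dualconvolutions} and construction, hence weak*-to-weak* continuous; together with the consistency of the dual-extended convolutions with \eqref{eq:convseqop}--\eqref{eq:convopop} (asserted after \eqref{eq:dualconvolutions2}), the limit passage is legitimate. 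The only ingredient not recorded in the paper is that $S_0'(\Rdd)$ is an $L^1$-convolution module, giving the bound $\|u\ast\phi_\delta\|_{S_0'}\leq \|u\|_{S_0'}\|\phi_\delta\|_{L^1}$ — true and standard. The paper instead argues directly, unwinding duality brackets until every convolution identity is applied only to elements of $\beauty$ and $\ell^1(\Lambda)$, and inserting a second bump $b$ with $b\equiv 1$ on $(1-2\epsilon)\square_{\Lambda^\circ}$ and $\text{supp}(b)\subset(1-\epsilon)\square_{\Lambda^\circ}$; this is close to, but not identical with, your sketched second route, since the paper never forms $\F_\sigma^\Lambda(S\star_\Lambda T)$ as a periodization for $S\in\beast$ and thereby sidesteps exactly the distributional products you flag as delicate. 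Your approach buys a computation-free proof in which the extra $\epsilon$ of room is spent on the mollifier; the paper's is self-contained and spends that room on the bump $b$.
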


\begin{proof}
We have already seen that $A\in \beauty$. Let $f\in S_0(\Rdd)$. Then  {\footnotesize 
	\begin{align*}
  \inner{\F_W\left[(S\star_\Lambda T)\star_\Lambda A)\right]}{f}_{S_0',S_0}&=   \inner{(S\star_\Lambda T)\star_\Lambda A)}{\rho(f)}_{\beast,\beauty} \quad \text{ by \eqref{eq:fwdual}} \\
  &= \inner{S\star_\Lambda T}{\rho(f)\star_\Lambda \check{A}^*}_{\ell^\infty,\ell^1} \quad \text{ by \eqref{eq:dualconvolutions}}  \\
&= \inner{S}{(\rho(f)\star_\Lambda \check{A}^*)\star_\Lambda \check{T}^*}_{\beast,\beauty} \quad \text{ by \eqref{eq:dualconvolutions3}} \\
&= \inner{\F_W(S)}{\F_W\left[(\rho(f)\star_\Lambda \check{A}^*)\star_\Lambda \check{T}^* \right]}_{S_0',S_0} \quad \text{ by \eqref{eq:fwdual}} \\
&= \inner{\F_W(S)}{b\cdot \F_W\left[(\rho(f)\star_\Lambda \check{A}^*)\star_\Lambda \check{T}^* \right]}_{S_0',S_0} 
\end{align*} }
In the last line we multiplied the right hand side by a bump function $b\in C_c^\infty(\Rdd)$ such that $b\vert_{(1-2\epsilon)\square_{\Lambda^\circ}}\equiv 1$ and $\text{supp}(b)\subset (1-\epsilon)\square_{\Lambda^\circ}$ -- this does not change anything by the assumptions on the supports of $\F_W(S)$ and $b$. We find using Theorem \ref{thm:orthogonality} and Proposition \ref{prop:spreadinggm} that 
\begin{align*}
  b\cdot \F_W\left[(\rho(f)\star_\Lambda \check{A}^*)\star_\Lambda \check{T}^* \right]&=b\cdot \F_\sigma^\Lambda (\rho(f)\star_\Lambda \check{A}^*)\cdot  \overline{\F_W(T)} \\
  &=b \cdot \overline{\F_W(T)} P_{\Lambda^\circ}(f\overline{\F_W(A)}) \quad \text{ by \eqref{eq:fwcheckadjoint}}.
\end{align*}
We claim that this last function equals $b\cdot f$: if $z\notin (1-\epsilon)\square_{\Lambda^\circ}$, then $b(z)=0$, so $b(z)f(z)=0$ and $$b(z) \cdot \overline{\F_W(T)(z)} P_{\Lambda^\circ}(f\overline{\F_W(A)})(\dot{z})=0.$$
If $z\in (1-\epsilon)\square_{\Lambda^\circ}$, then $g(z)=1$ and {\footnotesize
\begin{align*}
  b(z)\overline{\F_W(T)(z)}P_{\Lambda^\circ}(f\overline{\F_W(A)})(\dot{z})&= b(z)\overline{\F_W(T)(z)}\sum_{\lambda^\circ \in \Lambda^\circ} f(z+\lambda^\circ) \overline{\F_W(A)}(z+\lambda^\circ) \\
  &= b(z)\overline{\F_W(T)(z)}f(z)\overline{\F_W(A)(z)} \\
  &= b(z)\overline{\F_W(T)(z)}f(z)\frac{\overline{g(z)}}{\overline{\F_W(T)(z)}} \\
  &= b(z)f(z).
\end{align*} }
since $\F_W(A)$ vanishes outside of $\square_{\Lambda^\circ}$ by construction. Hence we have shown that 
\begin{align*}
  \inner{\F_W\left[(S\star_\Lambda T)\star_\Lambda A)\right]}{f}_{S_0',S_0}&=\inner{\F_W(S)}{b\cdot f}_{S_0',S_0}  \\
  &= \inner{\F_W(S)}{f}_{S_0',S_0}
\end{align*}
for any $f\in S_0(\Rdd)$, which implies the result. 

\end{proof}

Operators $S$ such that $\text{supp}(\F_W(S))\subset [-\frac{a}{2},\frac{a}{2}]^d \times [-\frac{b}{2},\frac{b}{2}]^d$ where $ab\leq 1$ are called \textit{underspread}, and provide realistic models of communication channels \cite{Kozek:2006,Kozek:1997,Strohmer:2006,Grochenig:2014,Dorfler:2010}. We immediately obtain the following consequence.

\begin{cor}
	Any underspread operator $S\in \beast$ can be expressed as a convolution $T=c\star_\Lambda A$ with $c\in \ell^\infty(\Lambda)$ and $A\in \beauty$ for a sufficiently dense lattice $\Lambda$. In particular, $S$ is bounded on $L^2(\Rd)$.
\end{cor}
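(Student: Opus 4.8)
The plan is to exhibit $S$ as an instance of Theorem~\ref{thm:dualunderspread} for a suitably chosen dense lattice. Being underspread, $S$ satisfies $\operatorname{supp}(\F_W(S))\subset Q$ for some box $Q=[-\tfrac a2,\tfrac a2]^d\times[-\tfrac b2,\tfrac b2]^d$. Fix once and for all some $\epsilon\in(0,\tfrac12)$ and a reference lattice $\Lambda_0$ (say $\Lambda_0=\Z^{2d}$), and for $r>0$ put $\Lambda=r\Lambda_0$. A direct computation with the symplectic form shows $(r\Lambda_0)^\circ=\tfrac1r\Lambda_0^\circ$, so the fundamental domain fixed in the Remark preceding Theorem~\ref{thm:underspread} scales as $\square_{\Lambda^\circ}=\tfrac1r\square_{\Lambda_0^\circ}$. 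Hence $(1-2\epsilon)\square_{\Lambda^\circ}$ expands to all of $\Rdd$ as $r\to0^+$, and for $r$ small enough we get $Q\subset(1-2\epsilon)\square_{\Lambda^\circ}$; for such $r$ the lattice $\Lambda=r\Lambda_0$ has $|\Lambda|=r^{2d}|\Lambda_0|$ as small as we wish, i.e.\ it is ``sufficiently dense''.

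With $\Lambda$ fixed, the Remark preceding Theorem~\ref{thm:underspread} (via \cite[Prop. 2.26]{Lee:2003}) furnishes $g\in C_c^\infty(\Rdd)$ with $g\equiv1$ on $(1-\epsilon)\square_{\Lambda^\circ}$ and $\operatorname{supp}(g)\subset\square_{\Lambda^\circ}$. For the auxiliary operator I would take the rank-one operator $T=\varphi_0\otimes\varphi_0$ built from the normalized Gaussian $\varphi_0$. Then $T\in\beauty$ since $\varphi_0\in S_0(\Rd)$, and by \eqref{eq:fwrankone} we have $\F_W(T)(x,\omega)=e^{\pi i x\cdot\omega}V_{\varphi_0}\varphi_0(x,\omega)$, which never vanishes because the short-time Fourier transform of a Gaussian with a Gaussian window is a nowhere-zero Gaussian up to a unimodular factor. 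In particular $\F_W(T)\neq0$ on $\operatorname{supp}(g)$, so all hypotheses of Theorem~\ref{thm:dualunderspread} are in force.

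Applying Theorem~\ref{thm:dualunderspread} yields $S=(S\star_\Lambda T)\star_\Lambda A$ with $A\in\beauty$ determined by $\F_W(A)=g/\F_W(T)$. Set $c:=S\star_\Lambda T$. Since $S\in\beast$ and $T\in\beauty$, commutativity of the convolution together with \eqref{eq:dualconvolutions3} shows $c=T\star_\Lambda S\in\ell^\infty(\Lambda)$, so $S=c\star_\Lambda A$ has the claimed form. Finally, Proposition~\ref{prop:youngschatten}(ii) with $p=\infty$ gives $\|S\|_{\SC^\infty}=\|c\star_\Lambda A\|_{\SC^\infty}\lesssim\|c\|_{\ell^\infty}\|A\|_{\beauty}<\infty$, so $S\in\SC^\infty=\bo$ is bounded on $L^2(\Rd)$. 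The only step calling for genuine care is the adjoint-lattice bookkeeping in the first paragraph: one must verify that making $\Lambda$ dense really enlarges the \emph{specific} fundamental domain $\square_{\Lambda^\circ}$ of the Remark by the right amount to absorb $\operatorname{supp}(\F_W(S))$ with the $(1-2\epsilon)$ margin; once that is settled, the rest is a direct appeal to results already established.
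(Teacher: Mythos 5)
Your proposal is correct and follows the route the paper intends: the corollary is stated as an immediate consequence of Theorem \ref{thm:dualunderspread}, and you supply exactly the missing details (scaling $\Lambda=r\Lambda_0$ so that $(1-2\epsilon)\square_{\Lambda^\circ}$ absorbs the support of $\F_W(S)$, taking $T=\varphi_0\otimes\varphi_0$ with nowhere-vanishing $\F_W(T)$, and reading off boundedness from \eqref{eq:dualconvolutions}/Proposition \ref{prop:youngschatten}). The adjoint-lattice bookkeeping $(r\Lambda_0)^\circ=\tfrac1r\Lambda_0^\circ$ is right, so no gap remains.
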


  It is known (see \cite{Dorfler:2010}) that for an operator $S$ to be well-approximated by Gabor multipliers -- i.e. operators $c\star_\Lambda (\psi\otimes \psi)$ for $\psi\in L^2(\Rd)$ -- $S$ should be underspread. The result above shows that any underspread operator $S$ is given precisely by a convolution $S=c\star_\Lambda A$ if we allow $A$ to be any operator in $\beauty$, not just a rank-one operator. In fact, $A$ as constructed in the theorem will never be a rank-one operator, since $\F_W(A)$ has compact support -- this is not possible for rank-one operators \cite{Janssen:1998a}. If $S$ satisfies $S\in \SC^p$ in addition to the assumptions of Theorem \ref{thm:dualunderspread}, then $c=S\star T\in \ell^p(\Lambda)$ by Proposition \ref{prop:youngschatten}. Hence the $p$-summability of $c$ in $S=c\star_\Lambda A$ reflects the fact that $S\in \SC^p$. 

Theorem \ref{thm:dualunderspread} also implies that underspread operators $S$ are determined by the sequence $S\star_\Lambda T$ when $T\in \beauty$ is chosen appropriately. This was a major motivation for \cite{Grochenig:2014}, since when $T$ is a rank-one operator $T=\varphi\otimes \varphi$, the sequence $S\star_\Lambda \check{T}$ is the diagonal of the so-called channel matrix of $S$ with respect to $\varphi$ -- see \cite{Grochenig:2014,Pauwels:2011} for a thorough discussion and motivation of these concepts. Finally, note that Theorem \ref{thm:dualunderspread} gives a (partial) discrete analogue of part (5) of Theorem \ref{thm:kiukastauberian}. 

\section*{Acknowledgements} 
We thank Franz Luef for insightful feedback on various drafts of this paper. We also thank Markus Faulhuber for helpful discussions and suggestions, particularly concerning Proposition \ref{prop:nodensity}.
\bibliographystyle{plain}         %

\begin{thebibliography}{10}

\bibitem{Balazs:2010}
{P}. {B}alazs, {B}. {L}aback, {G}. {E}ckel, and {W}.~{A}. {D}eutsch.
\newblock {T}ime-frequency sparsity by removing perceptually irrelevant
  components using a simple model of simultaneous masking.
\newblock {\em {I}{E}{E}{E} {T}rans. {A}udio. {S}peech {L}ang. {P}rocess},
  18(1):34--49, 2010.

\bibitem{Bayer:2014}
{D}. {B}ayer and {K}. {G}r{\"o}chenig.
\newblock {T}ime-frequency localization operators and a {B}erezin transform.
\newblock {\em {I}ntegr. {E}qu. {O}per. {T}heory}, 82(1):95 -- 117, 2015.

\bibitem{Bekka:2004}
{B}. {B}ekka.
\newblock {S}quare integrable representations, von {N}eumann algebras and an
  application to {G}abor analysis.
\newblock {\em {J}. {F}ourier {A}nal. {A}ppl.}, 10(4):325--349, 2004.

\bibitem{Benedetto:1998}
{J}.~{J}. {B}enedetto and {S}. {L}i.
\newblock {T}he theory of multiresolution analysis frames and applications to
  filter banks.
\newblock {\em {A}ppl. {C}omput. {H}armon. {A}nal.}, 5(4):389--427, 1998.

\bibitem{Benedetto:2006}
{J}.~{J}. {B}enedetto and {G}.~{E}. {P}fander.
\newblock {F}rame expansions for {G}abor multipliers.
\newblock {\em {A}ppl. {C}omput. {H}armon. {A}nal.}, 20(1):26--40, 2006.

\bibitem{Bergh:1976}
{J}. {B}ergh and {J}. {L}{\"o}fstr{\"o}m.
\newblock {\em {I}nterpolation {S}paces. {A}n {I}ntroduction}.
\newblock Number 223 in {G}rundlehren {M}ath. {W}iss. {S}pringer, 1976.

\bibitem{Busch:2016}
{P}. {B}usch, {P}. {L}ahti, {J}.-{P}. {P}ellonp{\"a}{\"a}, and {K}. {Y}linen.
\newblock {\em {Q}uantum {M}easurement}.
\newblock {T}heoretical and {M}athematical {P}hysics. {S}pringer, 2016.

\bibitem{Christensen:2016}
{O}. {C}hristensen.
\newblock {\em {A}n {I}ntroduction to {F}rames and {R}iesz {B}ases}.
\newblock {A}pplied and {N}umerical {H}armonic {A}nalysis. {B}irkh{\"a}user
  {B}asel, {S}econd edition, 2016.

\bibitem{Cordero:2003}
{E}. {C}ordero and {K}. {G}r{\"o}chenig.
\newblock {T}ime-frequency analysis of localization operators.
\newblock {\em {J}. {F}unct. {A}nal.}, 205(1):107--131, 2003.

\bibitem{Daubechies:1990}
{I}. {D}aubechies.
\newblock {T}he wavelet transform, time-frequency localization and signal
  analysis.
\newblock {\em {I}{E}{E}{E} {T}rans. {I}nform. {T}heory}, 36(5):961--1005,
  1990.

\bibitem{deGosson:2011}
{M}. de~{G}osson.
\newblock {\em {S}ymplectic {M}ethods in {H}armonic {A}nalysis and in
  {M}athematical {P}hysics}, volume~7 of {\em {P}seudo-{D}ifferential
  {O}perators. {T}heory and {A}pplications}.
\newblock {B}irkh{\"a}user/{S}pringer {B}asel {A}{G}, {B}asel, 2011.

\bibitem{Deitmar:2014}
A.~Deitmar and S.~Echterhoff.
\newblock {\em Principles of harmonic analysis}.
\newblock Universitext. Springer, second edition, 2014.

\bibitem{Dorfler:2018}
{M}. {D}{\"o}rfler, {T}. {G}rill, {R}. {B}ammer, and {A} {F}lexer.
\newblock {B}asic filters for convolutional neural networks applied to music:
  {T}raining or design?
\newblock {\em {N}eural {C}omput {A}ppl}, 32:941--954, 2020.

\bibitem{Dorfler:2010}
{M}. {D}{\"o}rfler and {B}. {T}orresani.
\newblock {R}epresentation of operators in the time-frequency domain and
  generalized {G}abor multipliers.
\newblock {\em {J}. {F}ourier {A}nal. {A}ppl.}, 16(2):261--293, 2010.

\bibitem{Feichtinger:1981}
{H}.~{G}. {F}eichtinger.
\newblock {O}n a new {S}egal algebra.
\newblock {\em {M}onatsh. {M}ath.}, 92:269--289, 1981.

\bibitem{Feichtinger:2002}
{H}.~{G}. {F}eichtinger.
\newblock {S}pline-type spaces in {G}abor analysis.
\newblock In {D}.~{X}. {Z}hou, editor, {\em {W}avelet {A}nalysis: {T}wenty
  {Y}ears {D}evelopments {P}roceedings of the {I}nternational {C}onference of
  {C}omputational {H}armonic {A}nalysis, {H}ong {K}ong, {C}hina, {J}une 4--8,
  2001}, volume~1 of {\em {S}er. {A}nal.}, pages 100--122. {W}orld
  {S}ci.{P}ub., 2002.

\bibitem{Feichtinger:2018}
{H}.~{G}. {F}eichtinger and {M}.~{S}. {J}akobsen.
\newblock {T}he inner kernel theorem for a certain {S}egal algebra.
\newblock {\em {A}r{X}iv e-prints}, 2018.

\bibitem{Feichtinger:1998}
{H}.~{G}. {F}eichtinger and {W}. {K}ozek.
\newblock {Q}uantization of {T}{F} lattice-invariant operators on elementary
  {L}{C}{A} groups.
\newblock In {H}.~{G}. {F}eichtinger and {T}. {S}trohmer, editors, {\em {G}abor
  analysis and algorithms}, {A}ppl. {N}umer. {H}armon. {A}nal., pages 233--266.
  {B}irkh{\"a}user {B}oston, 1998.

\bibitem{Feichtinger:2006}
{H}.~{G}. {F}eichtinger and {F}. {L}uef.
\newblock {W}iener amalgam spaces for the {F}undamental {I}dentity of {G}abor
  {A}nalysis.
\newblock {\em {C}ollect. {M}ath.}, 57({E}xtra {V}olume (2006)):233--253, 2006.

\bibitem{Feichtinger:2003}
{H}.~{G}. {F}eichtinger and {K}. {N}owak.
\newblock {A} first survey of {G}abor multipliers.
\newblock In {H}.~{G}. {F}eichtinger and {T}. {S}trohmer, editors, {\em
  {A}dvances in {G}abor {A}nalysis}, {A}ppl. {N}umer. {H}armon. {A}nal., pages
  99--128. {B}irkh{\"a}user, 2003.

\bibitem{Feichtinger:1998a}
{H}.~{G}. {F}eichtinger and {G}. {Z}immermann.
\newblock {A} {B}anach space of test functions for {G}abor analysis.
\newblock In {H}.~{G}. {F}eichtinger and {T}. {S}trohmer, editors, {\em {G}abor
  {A}nalysis and {A}lgorithms: {T}heory and {A}pplications}, {A}ppl. {N}umer.
  {H}armon. {A}nal., pages 123--170. {B}irkh{\"a}user {B}oston, 1998.

\bibitem{Folland:1989}
{G}.~{B}. {F}olland.
\newblock {\em {H}armonic {A}nalysis in {P}hase {S}pace}.
\newblock {P}rinceton {U}niversity {P}ress, 1989.

\bibitem{Gabor:1946}
{D}. {G}abor.
\newblock {T}heory of communication.
\newblock {\em {J}. {I}{E}{E}}, 93(26):429--457, 1946.

\bibitem{Graven:1974}
{A}. {W}.~{M}. {G}raven.
\newblock {\em {B}anach modules over {B}anach algebras}.
\newblock PhD thesis, Katholieke Universiteit Nijmegen, the Netherlands, 1974.

\bibitem{Grochenig:1996}
{K}. {G}r{\"o}chenig.
\newblock {A}n uncertainty principle related to the {P}oisson summation
  formula.
\newblock {\em {S}tudia {M}ath.}, 121(1):87--104, 1996.

\bibitem{Grochenig:1998}
{K}. {G}r{\"o}chenig.
\newblock {A}spects of {G}abor analysis on locally compact abelian groups.
\newblock In {H}ans~{G}. {F}eichtinger and {T}. {S}trohmer, editors, {\em
  {G}abor {A}nalysis and {A}lgorithms: {T}heory and {A}pplications}, pages
  211--231. {B}irkh{\"a}user {B}oston, 1998.

\bibitem{Grochenig:2001}
{K}. {G}r{\"o}chenig.
\newblock {\em {F}oundations of {T}ime-{F}requency {A}nalysis}.
\newblock {A}ppl. {N}umer. {H}armon. {A}nal. {B}irkh{\"a}user, 2001.

\bibitem{Grochenig:2010}
{K}. {G}r{\"o}chenig.
\newblock {W}iener's lemma: {T}heme and variations. {A}n introduction to
  spectral invariance and its applications.
\newblock In {K}. {G}r{\"o}chenig, editor, {\em {F}our short courses on
  harmonic analysis. {W}avelets, frames, time-frequency methods, and
  applications to signal and image analysis}, {A}ppl. {N}umer. {H}armon.
  {A}nal., pages 175--244. {B}irkh{\"a}user, 2010.

\bibitem{Grochenig:2011}
{K}. {G}r{\"o}chenig.
\newblock {R}epresentation and approximation of pseudodifferential operators by
  sums of {G}abor multipliers.
\newblock {\em {A}ppl. {A}nal.}, 90(3-4):385--401, 2011.

\bibitem{Grochenig:1999}
{K}. {G}r{\"o}chenig and {C}. {H}eil.
\newblock {M}odulation spaces and pseudodifferential operators.
\newblock {\em {I}ntegr. {E}qu. {O}per. {T}heory}, 34(4):439--457, 1999.

\bibitem{Grochenig:2014}
{K}. {G}r{\"o}chenig and {E}. {P}auwels.
\newblock {U}niqueness and reconstruction theorems for pseudodifferential
  operators with a bandlimited {K}ohn-{N}irenberg symbol.
\newblock {\em {A}dv. {C}omput. {M}ath.}, 40:49--63, 2014.

\bibitem{Grochenig:2011toft}
{K}. {G}r{\"o}chenig and {J}. {T}oft.
\newblock Isomorphism properties of {T}oeplitz operators and
  pseudo-differential operators between modulation spaces.
\newblock {\em J. Anal. Math.}, 114:255--283, 2011.

\bibitem{Grochenig:2013}
{K}. {G}r{\"o}chenig and {J}. {T}oft.
\newblock {T}he range of localization operators and lifting theorems for
  modulation and {B}argmann-{F}ock spaces.
\newblock {\em {T}rans. {A}mer. {M}ath. {S}oc.}, 365:4475--4496, 2013.

\bibitem{Heil:2007}
C.~Heil.
\newblock History and evolution of the density theorem for {G}abor frames.
\newblock {\em J. Fourier Anal. Appl.}, 13(2):113--166, 2007.

\bibitem{Jakobsen:2018}
{M}.~{S}. {J}akobsen.
\newblock {O}n a (no longer) new {S}egal algebra: A review of the {F}eichtinger
  algebra.
\newblock {\em {J}. {F}ourier {A}nal. {A}ppl.}, 24(6):1579--1660, 2018.

\bibitem{Janssen:1995}
{A}. {J}. {E}.~{M}. {J}anssen.
\newblock {D}uality and biorthogonality for {W}eyl-{H}eisenberg frames.
\newblock {\em {J}. {F}ourier {A}nal. {A}ppl.}, 1(4):403--436, 1995.

\bibitem{Janssen:1998a}
{A}. {J}. {E}.~{M}. {J}anssen.
\newblock {P}roof of a conjecture on the supports of {W}igner distributions.
\newblock {\em {J}. {F}ourier {A}nal. {A}ppl.}, 4(6):723--726, 1998.

\bibitem{Keyl:2015}
M.~Keyl, J.~Kiukas, and R.~F. Werner.
\newblock Schwartz operators.
\newblock {\em Rev. Math. Phys.}, 28(03):1630001, 2016.

\bibitem{Kiukas:2012}
J.~Kiukas, P.~Lahti, J.~Schultz, and R.~F. Werner.
\newblock {Characterization of informational completeness for covariant phase
  space observables}.
\newblock {\em {J}. {M}ath. {P}hys.}, 53(10):102103, 2012.

\bibitem{Kozek:1992a}
{W}. {K}ozek.
\newblock {O}n the generalized {W}eyl correspondence and its application to
  time-frequency analysis of linear time-varying systems.
\newblock In {\em {I}{E}{E}{E} {I}nt. {S}ymp. on {T}ime--{F}requency and
  {T}ime--{S}cale {A}nalysis}, pages 167--170, 1992.

\bibitem{Kozek:1997}
{W}. {K}ozek.
\newblock {O}n the transfer function calculus for underspread {L}{T}{V}
  channels.
\newblock {\em {I}{E}{E}{E} {T}rans. {S}ignal {P}rocess.}, 45(1):219--223,
  1997.

\bibitem{Kozek:2006}
{W}. {K}ozek and {G}.~{E}. {P}fander.
\newblock {I}dentification of operators with bandlimited symbols.
\newblock {\em {S}{I}{A}{M} {J}. {M}ath. {A}nal.}, 37(3):867--888, 2006.

\bibitem{Lee:2003}
J.M. Lee.
\newblock {\em Introduction to Smooth Manifolds}.
\newblock Graduate Texts in Mathematics. Springer, 2003.

\bibitem{Lesch:2016}
{M}. {L}esch and {H}. {M}oscovici.
\newblock {M}odular curvature and {M}orita equivalence.
\newblock {\em {G}eom. {F}unct. {A}nal.}, 26(3):818--873, 2016.

\bibitem{Luef:2018}
{F}. {L}uef and {E}. {S}krettingland.
\newblock {C}onvolutions for {B}erezin quantization and {B}erezin-{L}ieb
  inequalities.
\newblock {\em {J}. {M}ath. {P}hys.}, 59(2):023502, 2018.

\bibitem{Luef:2018c}
{F}. {L}uef and {E}. {S}krettingland.
\newblock {C}onvolutions for localization operators.
\newblock {\em {J}. {M}ath. {P}ures {A}ppl. (9)}, 118:288--316, 2018.

\bibitem{Luef:2018b}
{F}. {L}uef and {E}. {S}krettingland.
\newblock Mixed-state localization operators: Cohen's class and trace class
  operators.
\newblock {\em {J}. {F}ourier {A}nal. {A}ppl.}, 25(4):2064--2108, 2019.

\bibitem{Pauwels:2011}
{E}. {P}auwels.
\newblock {\em {P}seudodifferential {O}perators, {W}ireless {C}ommunications
  and {S}ampling {T}heorems}.
\newblock PhD thesis, 2011.

\bibitem{Pool:1966}
{J}. {C}.~{T}. {P}ool.
\newblock {M}athematical aspects of the {W}eyl correspondence.
\newblock {\em {J}. {M}ath. {P}hys.}, 7:66--76, 1966.

\bibitem{Rajbamshi:2019}
S.~{Rajbamshi}, G.~{Taub\"ock}, P.~{Balazs}, and L.~D. {Abreu}.
\newblock {R}andom {G}abor multipliers for compressive sensing: A simulation
  study.
\newblock In {\em 2019 27th European Signal Processing Conference (EUSIPCO)},
  pages 1--5, 2019.

\bibitem{Reiter:2000}
H.~Reiter and J.~D. Stegeman.
\newblock {\em Classical harmonic analysis and locally compact groups},
  volume~22 of {\em London Mathematical Society Monographs}.
\newblock Clarendon Press, Oxford, second edition, 2000.

\bibitem{Rieffel:1988}
{M}.~{A}. {R}ieffel.
\newblock {P}rojective modules over higher-dimensional noncommutative tori.
\newblock {\em {C}anad. {J}. {M}ath.}, 40(2):257--338, 1988.

\bibitem{Rudin:2006}
{W}. {R}udin.
\newblock {\em {F}unctional {A}nalysis 2nd ed}.
\newblock {I}nternational {S}eries in {P}ure and {A}pplied {M}athematics.
  {M}c{G}raw-{H}ill, 1991.

\bibitem{Skrettingland:2019}
{E}. {S}krettingland.
\newblock On {G}abor g-frames and {F}ourier series of operators.
\newblock {\em ar{X}iv:1906.09662}, 2019.

\bibitem{Strohmer:2006}
{T}. {S}trohmer.
\newblock {P}seudodifferential operators and {B}anach algebras in mobile
  communications.
\newblock {\em {A}ppl. {C}omput. {H}armon. {A}nal.}, 20(2):237--249, 2006.

\bibitem{Taubock:2019}
G.~{Taub\"ock}, S.~{Rajbamshi}, P.~{Balazs}, and L.~D. {Abreu}.
\newblock Random {G}abor multipliers and compressive sensing.
\newblock In {\em 2019 13th International conference on Sampling Theory and
  Applications (SampTA)}, pages 1--4, 2019.

\bibitem{Tolimieri:1994}
R.~Tolimieri and R.~S. Orr.
\newblock {P}oisson summation, the ambiguity function, and the theory of
  {W}eyl-{H}eisenberg frames.
\newblock {\em {J}. {F}ourier {A}nal. {A}ppl.}, 1(3):233--247, 1994.

\bibitem{vonNeumann:1955}
{J}. von {N}eumann.
\newblock {\em {M}athematical {F}oundations of {Q}uantum {M}echanics.},
  volume~2 of {\em {I}nvestigations in {P}hysics}.
\newblock {P}rinceton {U}niversity {P}ress, {T}ranslated from the {G}erman
  edition by {R}obert {T}. {B}eyer. edition, 1955.

\bibitem{Werner:1984}
{R}.~F. {W}erner.
\newblock {Q}uantum harmonic analysis on phase space.
\newblock {\em {J}. {M}ath. {P}hys.}, 25(5):1404--1411, 1984.

\end{thebibliography}

\end{document}